\newcommand{\Bk}{\color{black}}
\newcommand{\Rd}{\color{red}}
\newcommand{\Bl}{\color{blue}}
\renewcommand{\Bk}{}
\renewcommand{\Rd}{}
\renewcommand{\Bl}{}
\renewcommand{\imath}{{\rm i}}	
\renewcommand\section{\@startsection{section}{1}{\z@}%
                       {-3\p@ \@plus -4\p@ \@minus -4\p@}%
                       {3\p@ \@plus 4\p@ \@minus 4\p@}%
                      {\normalfont\normalsize\centering\scshape}}
\numberwithin{equation}{section} 	
\newcommand{\Rosen}{Ros\'en}
\author{Lashi Bandara}
\author{Alan McIntosh}
\author{Andreas \Rosen}
\title[Riesz continuity of the Dirac operator under metric perturbations]{Riesz continuity of the Atiyah-Singer Dirac
operator under perturbations of the metric}
\date{\today}
\address{Lashi Bandara, 
Institut für Mathematik,
Universität Potsdam, 
D-14476, Potsdam OT Golm, Germany
}
\urladdr{\href{http://www.math.uni-potsdam.de/~bandara}{http://www.math.uni-potsdam.de/~bandara}}
\email{\href{mailto:lashi.bandara@uni-potsdam.de}{lashi.bandara@uni-potsdam.de}}
\address{Alan McIntosh, Mathematical Sciences Institute,
Australian National University, Canberra, ACT, 2601, Australia }
\urladdr{\href{http://maths.anu.edu.au/~alan}{http://maths.anu.edu.au/~alan}}
\email{\href{mailto:alan.mcintosh@anu.edu.au}{alan.mcintosh@anu.edu.au}}
\address{Andreas \Rosen, Mathematical Sciences,
Chalmers University of Technology and University of Gothenburg, SE-412 96, Gothenburg, Sweden}
\urladdr{\href{http://www.math.chalmers.se/~rosenan}{http://www.math.chalmers.se/~rosenan}}
\email{\href{mailto:andreas.rosen@chalmers.se}{andreas.rosen@chalmers.se}}
\keywords{Riesz continuity, Dirac operator, spectral flow, Kato square root problem, functional calculus}
\subjclass[2010]{58J05, 58J37, 58J30, 35J46,  42B37}
\def\colour{\colour}
\def\colour{\color}
\newtheorem{theorem}{Theorem}[section]
\newtheorem{lemma}[theorem]{Lemma}
\newtheorem{proposition}[theorem]{Proposition}
\newtheorem{definition}[theorem]{Definition}
\newtheorem{remark}[theorem]{Remark}
\newcommand{\floor}[1]{\lfloor #1 \rfloor} 
\newcommand{\mdot}{\cdotp}
\newcommand{\cbrac}[1]{\left(#1\right)}
\newcommand{\bbrac}[1]{\left[#1\right]}
\newcommand{\dbrac}[1]{\left\{#1\right\}}
\newcommand{\modulus}[1]{\left|#1\right|}
\newcommand{\set}[1]{\dbrac{#1}}
\newcommand{\dom}{ {\mathcal{D}}}
\newcommand{\ran}{ {\mathcal{R}}}
\newcommand{\comp}{\, \circ\, }
\newcommand{\e}{\mathrm{e}}
\newcommand{\R}{\mathbb{R}}
\newcommand{\C}{\mathbb{C}}
\newcommand{\In}{\mathbb{Z}}
\newcommand{\Na}{\ensuremath{\mathbb{N}}}
\newcommand{\script}[1]{\mathscr{#1}}
\renewcommand{\emptyset}{\varnothing}
\newcommand{\union}{\cup}
\newcommand{\intersect}{\cap}
\newcommand{\rest}[1]{{{\lvert_{}}_{}}_{#1}}
\newcommand{\close}[1]{\overline{#1}}		
\newcommand{\id}{{\rm id}}
\renewcommand{\epsilon}{\varepsilon}
\renewcommand{\phi}{\varphi}
\newcommand{\ch}[1]{\upchi_{{#1}}}	
\newcommand{\embed}{\hookrightarrow}		
\newcommand{\tensor}{\otimes}
\newcommand{\comm}[1]{\bbrac{#1}}		
\newcommand{\norm}[1]{\| #1 \|}			
\newcommand{\snorm}[1]{\left\| #1 \right\|}			
\newcommand{\spt}[1]{{\rm spt} {\text{ }}#1}	
\newcommand{\conform}{{\upomega}}		
\DeclareMathOperator{\tr}{tr}			
\DeclareMathOperator{\diam}{diam}		
\DeclareMathOperator{\len}{\ell}			
\DeclareMathOperator{\rad}{rad}			
\DeclareMathOperator{\divv}{div}		
\newcommand{\ddt}[1]{\frac{d}{d#1}}		
\newcommand{\Ld}[1]{[#1]}			
\newcommand{\Ric}{{\rm Ric}}			
\newcommand{\Rs}{\mathcal{R}_S}			
\DeclareMathOperator{\inj}{inj} 		
\newcommand{\Forms}[1][{}]{\mathbf{\Omega}^{#1}}		
\newcommand{\Tensors}[1][{}]{{\mathcal{T}}^{(#1)}}	
\newcommand{\Sect}{\mathbf{\Gamma}}		
\newcommand{\tanb}{{\rm T}}		
\newcommand{\cotanb}{{\rm T}^\ast}	
\newcommand{\pushf}[1]{#1_\ast}			
\newcommand{\pullb}[1]{#1^\ast}			
\DeclareFontFamily{OT1}{restrictfont}{}
\DeclareFontShape{OT1}{restrictfont}{m}{n}{<-> fmvr8x}{}
\newcommand{\adj}[1]{{#1}^\ast}			
\newcommand{\biadj}[1]{{#1}^{\ast\ast}}		
\newcommand{\extd}{{\rm d}}			
\newcommand{\inprod}[1]{\left\langle #1 \right\rangle}	
\DeclareMathOperator{\grad}{grad}			
\newcommand{\conn}[1][{}]{{\nabla_{{#1}}}}		
\DeclareMathOperator{\Lipp}{Lip}		
\newcommand{\Leb}[1][{}]{\script{L}^{#1}}			
\newcommand{\Cliff}[1][{}]{\Delta^{#1}}		
\DeclareMathOperator{\cliff}{{\scaleobj{0.5}{\triangle}\, }}	
\DeclareMathOperator{\Spin}{Spin}			
\DeclareMathOperator{\Spinors}{\slashed{\Delta}}	
\newcommand{\spin}[1]{\slashed{#1}}		
\DeclareMathOperator{\SO}{SO}			
\newcommand{\Prin}[1]{\mathrm{P}_{#1}}		
\newcommand{\bddlf}{\mathcal{L}} 	
\newcommand{\rs}[1]{\textsf{R}_{#1}}			
\DeclareMathOperator{\nrad}{nrad}			
\newcommand{\Lp}[2][{}]{{\rm L}^{#2}_{\rm #1}}		
\newcommand{\Ck}[2][{}]{{\rm C}^{#2}_{\rm #1}}		
\newcommand{\Sob}[2][{}]{{\rm W}^{#2}_{\rm #1}}		
\newcommand{\SobH}[2][{}]{\Sob[#1]{#2,2}}
\newcommand{\Lips}[1][{}]{{\rm Lip}_{\rm #1}}		
\newcommand{\Sec}[1]{\mathrm{S}_{#1}}
\newcommand{\OSec}[1]{\mathrm{S}^\mathrm{o}_{#1}}
\newcommand{\maxx}[1]{\left<#1\right>}
\newcommand{\iden}{{\mathrm{I}}}
\DeclareMathOperator{\sgn}{sgn}
\newcommand{\Hil}{\script{H}}			
\newcommand{\Lap}{\Delta}			
\newcommand{\Q}[1][{}]{Q_{#1}}			
\newcommand{\DyQ}{\script{Q}}			
\newcommand{\ancester}[1]{\widehat{#1}}		
\newcommand{\brad}{\rho} 			
\newcommand{\scale}{\mathrm{t_S}}			
\newcommand{\jscale}{\mathrm{J}}
\newcommand{\hscale}{\mathrm{t_H}}			
\newcommand{\sC}{\script{C}}
\newcommand{\cV}{\mathcal{V}}
\newcommand{\cM}{\mathcal{M}} 
\newcommand{\cN}{\mathcal{N}}
\newcommand{\cW}{\mathcal{W}}
\newcommand{\Hol}{{\rm Hol}}
\newcommand{\mg}{\mathrm{g}}
\newcommand{\mgt}{\tilde{\mg}}
\newcommand{\mh}{\mathrm{h}}
\newcommand{\Poincare}{Poincar\'e~}		
\newcommand{\Av}{\mathbb{E}}
\newcommand{\Pri}{\upgamma}
\newcommand{\Carl}{\mathcal{C}}
\newcommand{\CBox}{\mathrm{R}}
\newcommand{\B}{\mathrm{B}}
\newcommand{\Dir}{{\rm D} }
\newcommand{\Dirb}{\Dir}		
\newcommand{\Dirp}{\tilde{\Dir}}
\newcommand{\Qqb}{{\rm Q}}
\newcommand{\Ppb}{{\rm P}}
\newcommand{\Qq}{\tilde{\Qqb}}
\newcommand{\Pp}{\tilde{\Ppb}}
\newcommand{\QQ}{{\rm\bf Q}}
\newcommand{\PP}{{\rm\bf P}}
\newcommand{\Rrb}{{\rm R}}
\newcommand{\Rr}{\tilde{\Rrb}}
\newcommand{\dtt}[1][{t}]{\frac{d#1}{#1}}
\newcommand{\const}{\mathrm{C}} 
\newcommand{\met}{\uprho}		
\newcommand{\Ball}{\mathrm{B}}
\newcommand{\sinprod}[1]{\inprod{#1}_{\ast}}
\newcommand{\SDir}{\slashed{\Dir}}
\newcommand{\Sconn}[1][{}]{{{\nabla}_{{#1}}}}		
\newcommand{\SDirp}{\tilde{\SDir}}
\newcommand{\U}{\mathrm{U}}
\newcommand{\rep}{\cdot}
\newcommand{\RNum}[1]{\uppercase\expandafter{\romannumeral #1\relax}}
\begin{document}

\maketitle
\vspace*{-2em}
\begin{abstract}
We prove that the Atiyah-Singer Dirac operator
$\SDir_\mg$ in $\Lp{2}$ depends Riesz continuously
on $\Lp{\infty}$ perturbations of complete metrics $\mg$ on a
smooth manifold.
The Lipschitz bound for the map $\mg \to \SDir_\mg(1 + \SDir_\mg^2)^{-\frac{1}{2}}$
depends on bounds on Ricci curvature and its first derivatives
as well as a lower bound on injectivity radius.
Our proof uses harmonic analysis techniques related to
Calder\'on's first commutator and the
Kato square root problem. We also show perturbation results
for more general functions of general Dirac-type operators
on vector bundles.
\end{abstract}
\tableofcontents
\vspace*{-2em}

\parindent0cm
\setlength{\parskip}{\baselineskip}

\section{Introduction}

In this paper we prove perturbation estimates for self-adjoint first-order partial differential operators 
$\Dir$ and $\Dirp$ of 
Dirac type, elliptic with domains $\Sob{1,2}(\cM, \cV)$ in 
$\Lp{2}(\cM,\cV)$, on vector bundles $\cV$ over complete Riemannian 
manifolds $(\cM,\mg)$.
A typical quantity to bound is
\begin{equation}  \label{eq:rieszpert}
  \snorm{ \frac{\Dirp}{\sqrt{I+\Dirp^2}} - \frac \Dir{\sqrt{\iden+\Dir^2}}}_{\Lp{2}(\cM, \cV)\to \Lp{2}(\cM, \cV)}.
\end{equation}
Our motivating and main example is when $\Dir = \SDir$ is the Atiyah--Singer Dirac operator $\SDir$ on $\cM$, acting on sections 
of a given spin bundle $\cV = \Spinors\cM$ over $(\cM,\mg)$. 
The perturbations $\Dirp$  we consider arise from the 
pullback of the Atiyah--Singer operator on a nearby manifold $(\cN,\mh)$. More precisely, we have a diffeomorphism $\zeta: \cM\to \cN$ 
which induce a map $\spin{\U}: \Spinors\cM \to \Spinors\cN$ between the two spinor bundles, 
and we set  $\Dirp:= \spin{\U}^{-1} \SDir_{\cN} \spin{\U}$   on  $\cM$. 
For the construction of the induced spinor pullback, we follow  \cite{Bourguignon}  by Bourguignon and Gauduchon  and build this 
from the isometric factor  of the polar factorisation of the differential of $\zeta$.

The perturbation \eqref{eq:rieszpert} is for the symbol $f(\lambda)= \lambda/\sqrt{1+\lambda^2}$ in 
the functional calculi of the operators $\Dir$ and $\Dirp$. This will yield continuity 
results in the Riesz metric given by \eqref{eq:rieszpert} for unbounded self-adjoint operators.
\Rd However, our method of proof  applies \Bk equally well to any other symbol $f(\lambda)$ which is 
holomorphic and bounded on the neighbourhood
$\OSec{\omega,\sigma}:=\set{x+iy: y^2 <\tan^2\omega x^2 +\sigma^2 }$
of $\R$ for some $0<\omega<\pi/2$ and $\sigma>0$.
\Rd
Our Riesz continuity result is non-trivial 
as it entails cutting through the spectrum at infinity
with the added complication that 
the symbol has different limits at infinity ($\lim_{\lambda\to \pm \infty} f(\lambda) = \pm 1$). \Bk
This should be compared to the weaker continuity result for the graph metric 
$$
    \snorm{\frac{\Dirp-\imath}{\Dirp+\imath} - \frac {\Dir-\imath}{\Dir+\imath}}_{\Lp{2}(\cM, \cV )\to \Lp{2}(\cM, \cV)}
$$
for unbounded self-adjoint operators, which is simpler since the symbol 
$g(\lambda)=(\lambda-\imath)/(\lambda+\imath)$ is holomorphic at $\infty$.

\Bl
The Riesz and graph topologies are of great importance in the study of self-adjoint
unbounded operators because of their connection to the 
\emph{spectral flow}. Loosely speaking, this  is the net
number of eigenvalues crossing zero along a curve 
from the unit interval to the set of self-adjoint operators.
The study of the spectral flow was initiated by Atiyah and Singer in \cite{AS69}
since it has important connections to particle physics. Their focus, however, was
on bounded Fredholm self-adjoint operators and their point of view 
was largely topological. An analytic formulation 
of the spectral flow also exists due to Phillips in \cite{P96}.

In the bounded case, the choice of topology for the study 
of the spectral flow is canonically given by the norm topology. 
However, in order to study differential operators, 
the unbounded case needs to be considered. Here,  a choice of topology 
needs to be made and the graph metric is most
commonly used in the study of the spectral flow, primarily since it is easier 
to establish continuity in this topology. However, 
the Riesz topology is a preferred alternative since it better connects to topological 
and $K$-theoretic aspects of the spectral flow that were
observed in \cite{AS69} for bounded operators.
Further details of the relation between different metrics on 
the set of unbounded  self-adjoint operators can be found in \cite{Lesch} by Lesch.
Moreover, the survey paper \cite{BB} by Booß-Bavnbek provides a recent account of problems remaining in field of spectral flow.

Since in this paper we establish results 
in the Riesz topology, of particular relevance is Proposition 2.2 in \cite{Lesch}
where it is proved that  \Bk 
$$
  \snorm{ \frac{\Dirp}{\sqrt{\iden+\Dirp^2}} - \frac \Dirb{\sqrt{\iden+\Dirb^2}}}_{\Lp{2}(\cM, \cV)\to \Lp{2}(\cM, \cV)}
	\lesssim \|\Dirp-\Dirb\|_{\Sob{1,2}(\cM, \cV)\to \Lp{2}(\cM, \cV)}
$$
\Rd holds for small perturbations  $\Dirp$ of $\Dir$ 
with both operators self-adjoint and with domain $\Sob{1,2}(\cM, \cV)$. \Bk
We achieve a non-trivial strengthening of this estimate for Dirac-type differential operators, using techniques from harmonic analysis.
The structure of the perturbation that we consider is
\begin{equation}
\label{Eq:Struc}
  \Dirp-\Dirb= A_1\nabla + \divv A_2 +A_3,
\end{equation} 
where $A_1$, $A_2$  and $A_3$ are bounded multiplication operators $\cotanb\cM\otimes \cV\to \cV$, $\cV\to \cotanb\cM\otimes \cV$ and 
$\cV\to \cV$ respectively.
Typically in applications, and in particular for the Atiyah-Singer Dirac operator, one can achieve
$$
  \norm{A_i}_\infty \lesssim \norm{\mgt-\mg}_\infty,
$$
where $\mg$ is the metric on $\cM$ and $\mgt = \pullb{\zeta}\mh$ is the metric on $\cN$ pulled back to $\cM$. 
\Rd In order to conclude small Riesz distance between $\Dirb$ and $\Dirp$ using
the aforementioned  Proposition 2.2 in \cite{Lesch}, one would need not only
smallness of $\norm{A_i}_\infty$ but also smallness of $\norm{\conn^\mg A_2}$.
Via our methods, we are able to dispense this
requirement and only require the finiteness of $\norm{\conn^\mg A_2}$. \Bk

\Rd 
In Theorem \ref{Thm:Main}, which is our main result, \Bk  we prove the perturbation estimate
\begin{equation}  
\label{eq:mainpertest}
  \norm{f(\Dirp)- f(\Dirb)} \lesssim  \max_i \norm{A_i}_{\infty}  \norm{f}_{\Hol^\infty(\OSec{\omega,\sigma})}, 
\end{equation}
where the implicit constant depends on the geometry of $\cV\to \cM$ and the operators $\Dirb$ and $\Dirp$ as described 
in the hypothesis \ref{Hyp:First}-\ref{Hyp:Last} preceding Theorem \ref{Thm:Main}.
\Rd In Theorem \ref{Thm:MainApp}, we specialise 
Theorem \ref{Thm:Main} to the case  where the operators  $\Dir$ and $\Dirp$
are the Atiyah-Singer Dirac operators as previously discussed. Here, 
 the implicit constant depends \Bk 
roughly on the $\Ck{0,1}$ norm of $\mgt$ and $\Ck{2}$ norm of $\mg$. 
Injectivity radius bounds coupled
with bounds on Ricci curvature and its first derivatives 
allow us to obtain uniformly sized balls corresponding to harmonic
coordinates at every point.
Moreover, we obtain uniform $\Ck{2}$ control of the metric $\mg$ in each such chart. 
Therefore our result, unlike Proposition 2.2 in \cite{Lesch}, will apply to metric perturbations with 
$\mgt - \mg$ small only in $\Lp{\infty}$ norm,
\Rd under uniform $\Ck{2}$ control of $\mg$ and
uniform $\Ck{0,1}$ control of $\mgt$ 
in each such chart. A concrete example of such metrics
are $\mg = \iden$ and $\mgt(x) = (1 + \epsilon\sin(\modulus{x}/\epsilon))\iden$
on $\R^n$. \Bk

The main work in establishing \eqref{eq:mainpertest} is to prove quadratic estimates of the form
\begin{equation}  
\label{Eq:MainSFE}
\int_0^1 \snorm{\frac{t\Dirp}{\iden +t^2\Dirp^2} B \frac{\iden}{\iden+t^2 \Dirb^2}u}_{\Lp{2}(\cM,\cV)}^2 \frac{dt}{t}
	\lesssim \norm{B}_{\Lp{\infty}(\cM,\cV)}^2 \norm{u}_{\Lp{2}(\cM,\cV)}^2,
\end{equation}
where $B$ is a bounded operator, a multiplication operator, or special kind of a singular integral. 
The use of such quadratic estimates to bound functional calculi goes
back to the work of Coifman, McIntosh and Meyer  \cite{CMcMA, CMcM} on 
Calder\'on's problem on the boundedness of the Cauchy integral of Lipschitz curves. 
The quadratic estimates that we require in this paper are at the level of those needed to bound Calder\'on's first commutator. 
An additional technical difficulty for us in the present work is that $B$ also may involve a certain singular integral operator.
To overcome this problem, we need a Riesz-Weitzenb\"ock condition stated as hypothesis \ref{Hyp:Weitz}.

The starting point for our work in this paper, was a twin result for the Hodge--Dirac operator $\extd+\adj{\extd}$ proved by the last two named 
authors jointly with Keith in \cite{AKMC}. There it was proved, in the case of compact manifolds, that 
\begin{equation}
\label{Eq:HD}  
\snorm{\sgn\cbrac{\frac{\extd_{\mgt} + \adj{\extd}_{\mgt}}{\sqrt{ 1 + (\extd_{\mgt} + \adj{\extd}_{\mgt})^2}}}
	-\sgn\cbrac{\frac{\extd_{\mg} + \adj{\extd}_{\mg}}{\sqrt{ 1 + (\extd_{\mg} + \adj{\extd}_{\mg})^2}}}}
	\lesssim \norm{f}_{\Hol^\infty(\OSec{\omega,\sigma})} \norm{\mgt -\mg}_{\infty},
\end{equation}

This made use not only of the methods from \cite{CMcM} described above, but also of stopping time arguments for Carleson measures from the solution
of the Kato square root problem by Auscher, Hofmann, Lacey, McIntosh and Tchamitchian~\cite{AHLMcT}.
These techniques give results for perturbations when the domains of the Hodge-Dirac operators change, that is when no Lipschitz
control of the metric is assumed, and even give holomorphic dependence of $\sgn(\extd_{\mg} + \adj{\extd}_{\mg})$ 
on the metric $\mg$ and not only Lipschitz dependence.
However, there are also reasons to prefer the softer methods used in this paper and to avoid the stopping time arguments. Namely,
even though they make the implicit constant in \eqref{Eq:HD} independent of any Lipschitz control of the metrics, 
this constant in applications  may  become too large for the estimate to be useful. 
Our plan is to return to the perturbation problem for the Hodge-Dirac operator in a forthcoming paper.

\Rd
As aforementioned, since the Riesz topology is one of the most important operator topologies for 
unbounded self-adjoint operators, it is a natural question how much of the above estimates 
hold for more general Dirac type operators, and in particular the most fundamental Atiyah--Singer Dirac operator.
For these operators we no longer have access to Hodge splittings, 
and it is not even clear that the Dirac operators exist as 
closed and densely-defined operators for rough metrics 
(measurable coefficients but locally bounded below).
Therefore the perturbation estimates that we achieve in this paper, with the constant 
depending on the Lipschitz norm of the metrics, may be quite sharp. 
We do not even know however if it is possible to go beyond Lipschitz metrics for 
Dirac operators like the Atiyah-Singer one.
In any case, as Lesch rightly points out in \cite{Lesch}, it is more 
difficult to 
prove Riesz continuity as compared to other operator topologies
and therefore, our results should have interesting applications to the study of 
spectral flow and to index theory of Dirac operators. 
Moreover, given the generality of Theorem \ref{Thm:Main}, 
we anticipate that these applications will go beyond the fundamental case 
of the Atiyah-Singer Dirac operator that we consider as an application here.\Bk

The outline of this paper is as follows. Our main perturbation theorem, Theorem \ref{Thm:Main} 
for general Dirac-type operators is formulated in \S\ref{Sec:MainPert}.
Before stating it, we discuss the geometric and operator
theoretic assumptions and we list quantities that the implicit
constant in the estimate \eqref{eq:mainpertest}
depends on as hypotheses \ref{Hyp:First}-\ref{Hyp:Last}.

For the proof of Theorem \ref{Thm:Main}, the reader may jump 
directly to \S\ref{Sec:Red} and \S\ref{Sec:SFE}. Independent
of this, we first devote \S\ref{Sec:Dirac} to prove
Theorem \ref{Thm:MainApp}, which is an application of 
Theorem \ref{Thm:Main} to the Atiyah-Singer Dirac operator. 
For the sake of concreteness, we only consider
this Dirac operator obtained from the
\Rd standard spin representation of dimension $2^{\lfloor{\frac{n}{2}}\rfloor}$ \Bk
and a given Spin structure. But we
expect Theorem \ref{Thm:MainApp} to hold for
more general Dirac-type operators on Dirac-bundles
under similar geometric assumptions.
The proof of Theorem \ref{Thm:MainApp} amounts
to verifying \ref{Hyp:First}-\ref{Hyp:Last}
and the perturbation structure \eqref{Eq:Struc}. A key observation
regarding the latter is the following exploited in \S\ref{Sec:Struc}.
A perturbation term $A_3$ of the form $A_3 = \partial B$ (with $\partial$
denoting a partial derivative) with $\norm{B}_\infty$
small, but with $\norm{\partial B}_\infty$ only bounded,
can be handed as terms $A_1 \partial + \partial A_2$,
with $B = A_2 = -A_1$, since by the product rule,
$(\partial B)f = \partial(B f) - B(\partial f)$.

The proof of Theorem \ref{Thm:Main} in \S\ref{Sec:Red} and \S\ref{Sec:SFE}, 
brought together in \S\ref{Sec:SFEUnited},
contains the following steps. Using the functional calculus
of $\Dir$ and $\Dirp$, the estimate of $\norm{f(\Dir) - f(\Dirp)}$
is reduced to the quadratic estimate 
\eqref{Eq:MainSFE} in Proposition \ref{Prop:FirstRed}
and \ref{Prop:FinalRed}.
This quadratic estimate is obtained in three
steps described by the formula
\eqref{Eq:SFEBreak}, following a well known 
harmonic analysis technique used in the solution of the
Kato square root problem with its origins
from R. Coifman and Y. Meyer. For us,
the last term $\Pri_t \Av_t Sf$ is not 
the main one, since
the needed Carleson measure estimate follows
directly from the self-adjointness of $\Dirp$, 
as shown in \S\ref{Sec:SFECarl}. The main term in 
\eqref{Eq:SFEBreak} is rather the first, which localises the operator 
$\QQ_t$, which is local on scale $t$, to the multiplication
operator $\Pri_t$. Our problem here is
the presence of $S = \conn (\imath \iden + \Dir)^{-1}$,
which is   essentially  a singular integral operator.
To handle the non-local operator $S$ in 
Proposition \ref{Prop:PrinPart}, 
we require some smoothness of $\Dir$, guaranteed
by the Riesz-Weitzenb\"ock condition \eqref{Def:Weitz}.
In \cite{Bunke}, Bunke  obtains 
such an estimate, but with assumptions on the Riemannian
curvature tensor in place of the Ricci curvature.
Our proof here is inspired by the improvements
that Hebey presents using harmonic coordinate
charts under the presence of positive injectivity 
radius and bounds on Ricci curvature to 
prove density theorems for Sobolev spaces of functions 
on noncompact manifolds in \cite{Hebey}. 
\section*{Acknowledgements}
The first author was supported by the Knut and Alice Wallenberg foundation, KAW 2013.0322 
postdoctoral program in Mathematics for researchers from outside Sweden.
The second author appreciates the support of the Mathematical Sciences Institute at The Australian National University, 
and also the support of Chalmers University of Technology and University of Gothenburg during his visits to Gothenburg. 
Further he acknowledges support from the Australian Research Council.
The third author was supported by Grant 621-2011-3744 from the Swedish Research Council, VR.

\Rd 
We would like to thank Alan Carey and Krzysztof P. Wojciechowski for discussions about the 
relevance of our approach to open questions involving spectral flow for paths of unbounded self-adjoint operators. 
Unfortunately Wojciechowski  became ill and died before these promising early discussions could be developed.
\Bk 
\section{Setup and the statement of the main theorem}
\label{Sec:Setup}

\subsection{Notation}
Throughout this paper, we assume Einstein 
summation convention and use the analysts inequality
$a \lesssim b$ to mean that $a \leq C b$, where
$C>0$, and equivalence $a \simeq b$.
 The characteristic function on a set
$E$ will be denoted by $\ch{E}$. 
Throughout, we will identify vectorfields
and derivations. That is, for a function 
$f$ differentiable at $x$ and a vectorfield $X$
at $x$, we write $Xf$ to denote $\extd f(X) = \partial_{X} f$.
 Often, $X = e_i$, where $\set{e_i}$ is a
basis vector field inside a local frame. 
The support of a function (or section) $f$
is denoted by $\spt f$.
Whenever we write $\Ck{k,\alpha}$,
we do not assume $\Ck{k,\alpha}$
with global control of the norm
but rather, only $\Ck{k,\alpha}$
regularity locally.

\subsection{Manifolds and vector bundles}
\label{Sec:GBG}

Let $\cM$ be a smooth, connected manifold 
and $\mg$ be a metric on $\cM$ that is
at least $\Ck{0,1}$ (locally Lipschitz).
 By $\met$ denote the distance metric induced by $\mg$
and  by $\mu$ the induced volume measure. 

Throughout this paper, 
we assume that $(\cM,\mg)$ is
complete, by which we mean that
$(\cM,\met)$ is a complete  metric space.
By $\Ball(x,r)$ or $\Ball_r(x)$, 
we denote a $\met$-metric open ball 
of radius $r >0$ centred at $x \in \cM$.
For an arbitrary ball $\Ball$, we denote
its radius  by $\rad(\Ball)$.
We recall that by the Hopf-Rinow theorem, 
the condition of completeness is equivalent
to the fact that $\close{\Ball(x,r)}$ is compact
for any $x \in \cM$ and $r < \infty$.

By $\cV$, we denote a smooth complex vector
bundle of dimension $\dim \cV = N$ 
over $\cM$ with a metric 
$\mh$ that is at least $\Ck{0,1}$.
We let  $\pi_{\cV}:\cV \to \cM$ 
be the bundle projection map.
We define the 
space of $\mu$-measurable sections of
$\cV$ by $\Sect(\cV)$. 
Using the Riemannian measure $\mu$
and the bundle metric $\mh$, we define
the standard $\Lp{p}$ spaces 
which we denote by $\Lp{p}(\cV)$.

Let us now assume that $\conn$ is a
connection on $\cV$, compatible with $\mh$ 
\Rd almost-everywhere. \Bk By 
$\conn_2: \dom(\conn_2) \to \Lp{2}(\cotanb\cM \tensor \cV)$, 
denote the operator $\conn$  with  domain  
$$\dom(\conn_2) = \set{u \in \Ck{\infty} \intersect \Lp{2}(\cV): \conn u \in \Lp{2}(\cotanb\cM \tensor \cV)}.$$
Then, $\conn_2$ is densely-defined
and closable, and we define the
Sobolev space $\SobH{1}(\cV) = \dom(\close{\conn_2})$, 
with norm 
$\norm{u}_{\SobH{1}}^2  = \norm{\close{\conn_2}u}^2 + \norm{u}^2.$  
Moreover, recall that the
divergence operator is
then $\divv = -\adj{\close{\conn_2}}$.
It is well known that
$\Ck[c]{\infty}(\cV)$ is dense
in $\SobH{1}(\cV)$ and
when $\mg$ is smooth, 
that $\Ck[c]{\infty}(\cotanb\cM \tensor \cV)$
is dense in $\dom(\divv)$
(see \cite{B1}).
In what is to follow, 
we will sometimes write 
$\conn$ in place of $\close{\conn_2}$.

We shall require the following concept
of growth of the measure $\mu$ in later analysis.

\begin{definition}[Exponential volume growth]
\label{Def:Exp}
We say that $(\cM,\mg,\mu)$ has exponential
volume growth if there exists $c_E \geq 1$, $\kappa, c > 0$
such that 
\begin{equation*}
\tag{$\mathrm{\text{E}_{\text{loc}}}$}
\label{Def:Eloc}
0 < \mu(\Ball(x,tr)) \leq ct^\kappa \e^{c_E tr} \mu(\Ball(x,r)) < \infty, 
\end{equation*}
for every $t \geq 1,\ r > 0$ and $x\in \cM$.
\end{definition} 

We shall also require the following property.
\begin{definition}[Local \Poincare inequality]
\label{Def:Poin}
\index{Local \Poincare inequality}
We say that $\cM$ satisfies a \emph{local \Poincare inequality}
if there exists $c_P \geq 1$ such that
for all $f \in \SobH{1}(\cM)$, 
\begin{equation*}
\Rd
\snorm{f - \cbrac{\fint_{B} f\ d\mu_\mg}}_{\Lp{2}(B)} \leq c_P\ \rad(B) \norm{f}_{\SobH{1}(B)} \Bk
\tag{$\text{P}_{\text{loc}}$}
\label{Def:Ploc}
\end{equation*}
for all balls $B$ in $\cM$ such that $\rad(B) \leq 1$.
\end{definition}

This growth assumption as well as the 
local \Poincare inequality are very natural,
i.e., if the Ricci curvature $ \Ric_{\mg}$
of a smooth $\mg$ satisfies $\Ric_{\mg} \geq \eta \mg$
for some $\eta \in \R$, then by the
Bishop-Gromov comparison theorem (c.f. Chapter 9 in \cite{Petersen}),
\eqref{Def:Eloc}  and \eqref{Def:Ploc}  are both satisfied.

As for the vector bundle $\cV$,  we require
the following uniformly local Euclidean structure,
referred to as \emph{generalised bounded geometry} 
or \emph{GBG} following terminology from \cite{BMc}. 

\begin{definition}[Generalised Bounded Geometry]
\label{Def:GBG}
We say that $(\cM,\mh)$ satisfies
\emph{generalised bounded geometry}, or \emph{GBG} for short,
if there exist $\brad > 0$ and
$C\geq 1 $ such that, for each $x \in \cM$,
there exists a continuous local trivialisation 
$\psi_x: \Ball(x,\brad) \times \C^N \to \pi^{-1}_\cV(\Ball(x,\brad))$
satisfying
$$
C^{-1} \modulus{\psi_x^{-1}(y)u}_\delta \leq \modulus{u}_{\mh(y)} \leq C \modulus{\psi_x^{-1}(y)u}_\delta,$$
for all $y \in \Ball(x,\brad)$,
where $\delta$ denotes the usual inner product
in $\C^N$ and $\psi_x^{-1}(y) u = \psi_x^{-1}(y,u)$ is the pullback of the
vector $u \in \cV_y$ to $\C^N$ via the local trivialisation $\psi_x$
at $y \in \Ball(x,\brad)$. We call $\brad$ the \emph{GBG
radius}.  
\end{definition}

We remark that, unlike in \cite{BMc},
we do not ask for the trivialisations
to be smooth.
A trivialisation satisfying the above condition 
is said to be a \emph{GBG chart} and a set of trivialisations 
$\set{\psi_x: x \in \cM}$ a \emph{GBG atlas}. For each GBG chart $\psi_x$, the associated
\emph{GBG frame} is then 
$$\set{e^i(y) = \psi_x(y, \hat{e}^i): \set{\hat{e}^i}\ \text{standard basis for}\ \C^N}.$$
If these trivialisations have higher regularity, i.e.
the trivialisations are $\Ck{k,\alpha}$ for some $k \geq 0$ and 
$\alpha \in (0,1)$, then we refer to this aforementioned
terminology as a $\Ck{k,\alpha}$ GBG chart/atlas/frame respectively.

Like exponential growth, generalised bounded geometry 
is a geometrically natural condition. In the case 
that the metric $\mg$ is smooth and complete, 
under the assumption 
$\inj(\cM,\mg) \geq \kappa > 0$ and $\Ric_\mg \geq \eta \mg$
for some $\kappa > 0$ and $\eta \in \R$, the bundle
of $(p,q)$-tensors satisfies GBG.
See Theorem 1.2 in \cite{Hebey} 
and Corollary 6.5 in \cite{BMc}.

\subsection{Functional calculus}
\label{Sec:FunC}

In  this  section, we introduce some notions
from operator theory and functional calculi that 
will be of relevance in subsequent sections.

Let $\Hil$ be a Hilbert space, 
and $T: \dom(T) \subset \Hil \to \Hil$
a self-adjoint operator.  Indeed, 
by the spectral theorem (see \cite{Kato}, Chapter 6, \S5),
for every Borel function $b: \R \to \R$, 
we can define and estimate the operator $b(T)$.
However, we shall only consider symbols $b$
which are holomorphic on a neighbourhood
of $\R$, in which case $b(T)$ is obtained by the Riesz-Dunford
functional calculus as we now explain.

For $\omega \in (0, \pi/2)$ and $\sigma \in (0, \infty)$,
define
$$\OSec{\omega,\sigma}:=\set{x+iy: y^2<\tan^2\omega x^2 + \sigma^2},$$
We say that a function $\psi \in \Psi(\OSec{\omega,\sigma})$
if it is holomorphic on $\OSec{\omega,\sigma}$ and
there exists an $\alpha > 0$, $C > 0$ such that
$$ \modulus{\psi(\zeta)} \leq \frac{C}{\modulus{\zeta}^\alpha}.$$
Letting the curve $\gamma$ denote
$\set{y^2 = \tan^2 (\omega/2) x^2/2 + \sigma^2/2}$, 
oriented counter-clockwise inside $\OSec{\omega, \sigma}$,
the Riesz-Dunford functional calculus is
\begin{equation} 
\label{Eq:RD}
\psi(T)u = \frac{1}{2\pi\imath} \oint_{\gamma} \psi(\zeta)\rs{T}(\zeta)u\ d\zeta,
\end{equation}
for each $u \in \Hil$,
with $\rs{T}(\zeta) = (\zeta\iden - T)^{-1}$
and where the integral converges absolutely as Riemann-sums.

We say that a holomorphic function
$f \in \Hol^\infty(\OSec{\omega,\sigma})$ if 
there exists $C > 0$ such that
$\norm{f(\zeta)}_\infty \leq C$.
For such a function, there exists a uniformly
bounded $\psi_n \in \Psi(\OSec{\omega,\sigma})$
such that $\psi_n \to f$ pointwise, and the
functional calculus is defined as
$$ f(T)u = \lim_{n \to \infty} \psi_n(T)u,$$
for $u \in \Hil$, which converges due to the fact that $T$ is self-adjoint,
and is independent of the sequence $\psi_n$.

These details are obtained as a 
special case of the functional
calculus for the so-called $\omega$-bisectorial 
operators. A detailed exposition can be found in 
\cite{Morris2} by Morris 
and for $\omega$-sectorial operators 
in \cite{ADMc} by Albrecht, Duong, and McIntosh and \cite{Haase} by Haase. 

\subsection{The main theorem}
\label{S:DiffOp}
\label{Sec:MainPert}

We assume that
the manifold $(\cM,\mg)$ is complete
and that both $\mg$ and $\mh$ 
are at least $\Ck{0,1}$.

Let $\Dirb$ be a first-order differential
operator on $\Ck{\infty}(\cV)$. By this, we
mean that, inside each frame $\set{e^i}$
for $\cV$ and $\set{v_j}$ for $\tanb \cM$
near $x$,  there exist coefficients $\alpha^{jk}_l$ 
and terms $\omega^p_q$ (not
necessarily smooth) such that
\begin{equation}
\label{Def:DirForm}
\Dirb u = (\alpha^{jk}_l \conn[v_j] u_k + u_i \omega_l^i)\ e^l,
\end{equation}
where $u = u_i e^i \in \Ck{\infty}(\cV)$. 

We consider two essentially self-adjoint
first-order differential operators $\Dirb$ and
$\Dirp$, and with slight abuse of notation 
we use this notation for their self-adjoint
extensions.

In establishing our main perturbation estimate
from \Rd  $\Dirb$ 
to $\tilde{\Dirb}$  \Bk on $\cV \to \cM$, we will make the following hypotheses:
\begin{enumerate}[({A}1)]
\item
\label{Hyp:First}
\label{Hyp:BasicF}
\label{Hyp:FinDim}
$\cM$ and $\cV$ are finite dimensional, 
quantified by $\dim \cM < \infty$
and $\dim \cV < \infty$,

\item 
\label{Hyp:ExpVol}
$(\cM,\mg)$ has exponential volume growth as defined
in Definition \ref{Def:Exp}, quantified by 
$c < \infty$, $c_E < \infty$ and $\kappa < \infty$ in 
\eqref{Def:Eloc},

\item 
\label{Hyp:Poin}
A local \Poincare inequality \eqref{Def:Ploc}
holds on $\cM$ as in Definition \ref{Def:Poin}
quantified by $c_P < \infty$,
\label{Hyp:BasicL}

\item 
\label{Hyp:TanGBG}
$\cotanb \cM$ has $\Ck{0,1}$ GBG frames 
$\nu_j$ quantified by $\brad_{\cotanb\cM} > 0$ and
$C_{\cotanb\cM} < \infty$ in Definition 
\ref{Def:GBG}, with regularity
$ \modulus{\conn \nu_j} < C_{G,\cotanb\cM}$
with $C_{G, \cotanb \cM} < \infty$ almost-everywhere,

\item 
\label{Hyp:VecGBG}
\label{Hyp:BasGBGL}
$\cV$ has $\Ck{0,1}$ GBG frames
$e_j$ quantified by $\brad_{\cV} > 0$
and $C_{\cV} < \infty$ in Definition 
\ref{Def:GBG}, with regularity
$\modulus{\conn e_j} < C_{G,\cV}$
with $C_{G,\cV} < \infty$ almost-everywhere,

\item
\label{Hyp:PDO}
$\Dirb$ is a first-order PDO with $\Lp{\infty}$
coefficients. In particular, $ \comm{ \Dirb, \eta}$ is a 
pointwise multiplication operator
on almost-every fibre $\cV_x$, and
there exists $c_{\Dirb} > 0$ such that 
\begin{equation}
\label{Def:CommConst}
\modulus{ \comm{ \Dirb, \eta} u(x)} \leq c_{\Dirb} \Lipp \eta(x) \modulus{u(x)}
\end{equation}
for almost-every $x \in \cM$, every 
bounded Lipschitz function $\eta$, and
where $\Lipp\eta(x)$ is the \emph{pointwise Lipschitz
constant}. 

\item
\label{Hyp:DirGBG}
$\Dirb$ satisfies
$\modulus{\Dirb e_j} \leq C_{D,\cV}$
with $C_{D,\cV} <\infty$
almost-everywhere inside each GBG
frame $\set{e_j}$,

\item
\label{Hyp:Dom}
$\Dirb$ and $\Dirp$ both have 
domains $\Sob{1,2}(\cV)$
with $\const \geq 1$  the smallest 
constants satisfying
\begin{equation}
\label{Def:DomConst}
\const^{-1} \norm{u}_{\Dirb} \leq \norm{u}_{\SobH{1}} \leq \const \norm{u}_{\Dirb}
\quad \text{and}\quad
\const^{-1} \norm{u}_{\Dirp} \leq \norm{u}_{\SobH{1}} \leq \const \norm{u}_{\Dirp}.
\end{equation}

\item 
\label{Hyp:Weitz}
\label{Hyp:Last}
$\Dirb$ satisfies the Riesz-Weitzenb\"ock  condition
\begin{equation}
\label{Def:Weitz}
\norm{\conn^2 u} \leq c_{W} (\norm{\Dir^2 u} + \norm{u})
\end{equation}
with $c_W < \infty$.
\end{enumerate}

The implicit constants in our perturbation
estimates will be allowed to depend on 
\begin{multline}
\label{Def:MainConst}
\const(\cM,\cV, \Dirb, \Dirp)
	= \max \{\dim \cM, \dim\cV,  c ,  c_E, \kappa,
	c_P,  \brad_{\cotanb\cM} , C_{\cotanb \cM}, C_{G,\cotanb\cM},
\\
 {\brad_{\cV}} , C_{\cV},
C_{G, \cV}, c_{\Dirb}, C_{\Dirb, \cV}, \const,  c_W \} < \infty.
\end{multline}

In section \S\ref{Sec:Red} and \S\ref{Sec:SFE}, we
prove the following theorem.

\begin{theorem}
\label{Thm:Main}
Let $(\cM, \mg)$ be a smooth Riemannian manifold
with $\mg$ that is $\Ck{0,1}$, complete, 
and satisfying \eqref{Def:Eloc} and \eqref{Def:Ploc}.
Let $(\cV,\mh,\conn)$ be a smooth vector bundle
with $\Ck{0,1}$ metric $\mh$ and connection $\conn$
that are compatible almost-everywhere.

Let $\Dirb,\ \Dirp$ be self-adjoint operators
on $\Lp{2}(\cV)$  and assume the hypotheses
\ref{Hyp:First}-\ref{Hyp:Last} on $\cM$, $\cV$, $\Dirb$ 
and $\Dirp$. 
Moreover, assume that
\begin{equation}
\label{Def:Struc}
\Dirp \psi = \Dirb \psi + A_1 \conn \psi + \divv A_2 \psi  + A_3 \psi,
\end{equation}
holds in a distributional sense for $\psi \in \Sob{1,2}(\cV)$, where
\begin{equation}
\label{Def:Coeff}
\begin{aligned}
A_1 &\in \Lp{\infty}( \bddlf(\cotanb\cM \tensor \cV, \cV)),  \\
A_2 &\in \Lp{\infty}(\SobH{1}(\cV), \dom(\divv)), 
\ \text{and} \\
A_3 &\in \Lp{\infty}(\bddlf(\cV)),
\end{aligned}
\end{equation}
and let $\norm{A}_\infty = \norm{A_1}_\infty + \norm{A_2}_\infty + \norm{A_3}_\infty.$

Then, for each $\omega \in (0, \pi/2)$
and $\sigma \in (0, \infty]$, 
whenever $f \in \Hol^\infty(\OSec{\omega, \sigma})$,
we have the perturbation estimate
$$\norm{f(\Dirp) - f(\Dirb)}_{\Lp{2}(\cV) \to \Lp{2}(\cV)} \lesssim \norm{f}_{\Lp{\infty}(\Sec{\omega,\sigma})} \norm{A}_\infty,$$
where the implicit constant depends on $\const(\cM,\cV,\Dirb,\Dirp)$.
\end{theorem}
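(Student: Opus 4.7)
The plan is to use the Riesz-Dunford functional calculus of \S\ref{Sec:FunC} to convert the operator norm difference into a quadratic (square function) estimate. Approximating $f \in \Hol^\infty(\OSec{\omega,\sigma})$ by uniformly bounded $\psi_n \in \Psi(\OSec{\omega,\sigma})$, expressing each $\psi_n(\Dirp) - \psi_n(\Dirb)$ via the contour formula \eqref{Eq:RD}, and applying the second resolvent identity
$$\rs{\Dirp}(\zeta) - \rs{\Dirb}(\zeta) = -\rs{\Dirp}(\zeta)\bigl(A_1 \conn + \divv A_2 + A_3\bigr)\rs{\Dirb}(\zeta),$$
splits the difference into three pieces corresponding to $A_1$, $A_2$, $A_3$. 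After integrating by parts through $\divv = -\adj{\conn}$ in the $A_2$ piece, a change of variables $\zeta = t^{-1}e^{\imath\theta}$ on the contour, and a Cauchy-Schwarz step in $t$, I expect (following the form of Propositions \ref{Prop:FirstRed} and \ref{Prop:FinalRed}) that the estimate reduces to the square-function bound \eqref{Eq:MainSFE}, where $B$ is either a bounded pointwise multiplication operator (for the $A_1, A_3$ pieces) or a composition $B_0 S$ with $B_0$ bounded multiplication and $S = \conn(\imath \iden + \Dirb)^{-1}$ (for the $A_2$ piece).

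Following the harmonic-analytic approach to the Kato square root problem from \cite{AHLMcT} and the Hodge-Dirac case \cite{AKMC}, I would then decompose the integrand as in \eqref{Eq:SFEBreak},
$$\QQ_t B U_t u = \bigl(\QQ_t B - \Pri_t \Av_t S\bigr)U_t u + \Pri_t \Av_t S(U_t - \iden)u + \Pri_t \Av_t S u,$$
with $\QQ_t = t\Dirp(\iden + t^2 \Dirp^2)^{-1}$, $U_t = (\iden + t^2 \Dirb^2)^{-1}$, $\Av_t$ a dyadic averaging on GBG balls of radius $t$, $\Pri_t$ the principal-part multiplier associated to $\QQ_t B$, and $S$ the non-local factor of $B$ (the identity for the multiplication pieces, and $\conn(\imath\iden + \Dirb)^{-1}$ for the divergence piece). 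Each of the three terms is then controlled separately, as foreshadowed in \S\ref{Sec:SFE}.

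The two easier terms are treated as follows. The middle term uses the identity $\int_0^1 \snorm{(U_t - \iden) u}_2^2 \frac{dt}{t} \lesssim \norm{u}_2^2$, which follows from the spectral theorem applied to the self-adjoint $\Dirb$, combined with a uniform $\Lp{2} \to \Lp{2}$ bound on $\Pri_t \Av_t S$ coming from \ref{Hyp:Dom} and the local \Poincare inequality \eqref{Def:Ploc}. The last term is a Carleson-measure estimate: by Carleson's embedding on the space of homogeneous type built from \eqref{Def:Eloc} and \eqref{Def:Ploc}, it suffices to verify a uniform local $\Lp{2}$ bound on $\Pri_t$, and as shown in \S\ref{Sec:SFECarl} this reduces essentially to the self-adjointness of $\Dirp$ together with \ref{Hyp:Dom}.

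The main obstacle, and the technical heart of the argument, is the principal-part estimate $\QQ_t B - \Pri_t \Av_t S$ carried out in Proposition \ref{Prop:PrinPart}. Here one must localise the non-local operator $\QQ_t B$ to its multiplication principal part at scale $t$. For the multiplication pieces this is a Calder\'on-commutator style argument using Davies-Gaffney off-diagonal decay for $\QQ_t$ (obtained from the commutator bound \eqref{Def:CommConst} and the exponential growth \eqref{Def:Eloc}), together with the GBG structure \ref{Hyp:TanGBG}-\ref{Hyp:VecGBG} to compare sections across nearby fibres. The genuine difficulty is the $A_2$ piece, where $B$ carries the non-local singular integral $S = \conn(\imath\iden + \Dirb)^{-1}$: to gain the necessary factor of $t$ one must upgrade $\Dirb$-smoothness to $\conn^2$-smoothness, and this is exactly the content of the Riesz-Weitzenb\"ock condition \ref{Hyp:Weitz}. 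Combined with Hebey-style uniform harmonic-coordinate control in each GBG chart, this provides the needed localisation; assembling the three pieces then yields \eqref{Eq:MainSFE} and hence the theorem.
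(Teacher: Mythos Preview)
Your overall strategy matches the paper: reduce to quadratic estimates via the resolvent identity and Propositions~\ref{Prop:FirstRed}--\ref{Prop:FinalRed}, then handle those estimates via the three-term splitting \eqref{Eq:SFEBreak} into principal, cancellation, and Carleson parts. You also correctly identify that the Riesz--Weitzenb\"ock condition \ref{Hyp:Weitz} is the key ingredient for handling the singular-integral factor $S = \conn(\imath\iden + \Dirb)^{-1}$ in the principal-part term.

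However, there are two concrete problems. First, you have the roles of $A_1$ and $A_2$ swapped: it is the $A_1\conn$ piece that carries the non-local factor $S = \conn(\imath\iden+\Dirb)^{-1}$ (one writes $A_1\conn = A_1 S \cdot (\imath\iden+\Dirb)$ and absorbs $(\imath\iden+\Dirb)$ into $\Qqb_t$), whereas the $\divv A_2$ piece becomes a pure multiplication problem with $S=\iden$ after absorbing $t\,\divv$ into $\Pp_t$ on the left. This is a labelling mix-up rather than a fatal flaw, since the same machinery applies once the roles are corrected.

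Second, and more seriously, your treatment of the cancellation term does not work. The claimed estimate $\int_0^1 \norm{(\Ppb_t - \iden)u}^2\,\dtt \lesssim \norm{u}^2$ is \emph{false}: by functional calculus this equals $\int_{\R}\int_0^1  \frac{t^3\lambda^4}{(1+t^2\lambda^2)^2}\,dt\,d\norm{E_\lambda u}^2$, and the inner $t$-integral behaves like $\log|\lambda|$ as $|\lambda|\to\infty$, so the quantity is unbounded on $\Lp{2}$. The paper (Proposition~\ref{Prop:CanPart}) does not bound $\Pri_t\Av_t S(\iden-\Ppb_t)$ crudely by $\norm{\Pri_t\Av_t S}\cdot\norm{(\iden-\Ppb_t)u}$; instead it proves a Schur-type kernel estimate $\norm{\Av_t S(\iden-\Ppb_t)\Qqb_s} \lesssim \min\bigl((s/t)^\alpha,(t/s)^\alpha\bigr)$ for some $\alpha>0$. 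The $(s/t)^\alpha$ bound for $t>s$ requires the cancellation Lemmas~\ref{Lem:DirCan}--\ref{Lem:Can}, i.e.\ estimates of $\bigl|\fint_\Q \Upsilon u\,d\mu\bigr|$ for $\Upsilon \in \{\Dirb,\conn,\divv\}$, which exploit the differential structure \eqref{Def:DirForm} and the GBG frame bounds \ref{Hyp:VecGBG}, \ref{Hyp:DirGBG}. This cancellation step is not purely spectral and cannot be replaced by the square-function bound you propose.
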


\begin{remark}
\label{Rem:Main}
 The  assumption of self-adjointness of the operators $\Dirb$
and $\Dirp$ in Theorem \ref{Thm:Main} can be relaxed, as we only 
use this to deduce quadratic estimates for $\Dirb$
and $\Dirp$. For example, it suffices
to assume that $\Dirb$ and $\Dirp$ are similar in $\Lp{2}$
to self-adjoint operators. 
\end{remark} 

\Rd 
\begin{remark}
Although our motivation and key application in is 
in the case that $\Dirb$ and $\Dirp$ correspond to 
the Atiyah-Singer Dirac operators on a Spin manifold
corresponding to two different metrics, we
allow for greater generality in our main theorem
since we anticipate it to have a much broader set of applications. 
For instance, in the study of particle physics, twisted
bundles and their associated twisted Dirac operators 
are of significance and we expect that such situations 
might also be analysed by our main theorem.
For readers interested in such operators, we
hope that \S\ref{Sec:Dirac} will serve as a guideline to
how hypotheses \ref{Hyp:First}-\ref{Hyp:Last}
can be shown to be satisfied.
\end{remark}
\Bk  
\section{Applications to the Atiyah-Singer Dirac operator}
\label{Sec:Dirac}

Let $\cM$ be a smooth 
manifold with a $\Ck{0,1}$ (locally Lipschitz) 
metric $\mg$. We let $\Forms\cM$
denote the bundle of differential 
forms and on fixing 
a Clifford product $\cliff$,
we let $\Cliff\cM = \Cliff \tanb\cM$ denote the 
Clifford bundle. Recall that 
$\Cliff\cM \cong \Forms\cM$ as a vector space.
Moreover, we remind the reader that we identify 
vectorfields and derivations throughout, so $Xf$
means the directional derivative $\partial_X f$
where $X$ is a vectorfield and $f$ is a scalar function.
 
Fix a frame $\set{v_j}$ near $x$, let $\mg_{ij} = \mg(v_i, v_j)$
and define $w^i_{kl}$ at points where $\mg$ is differentiable inside the
frame by
\begin{equation}
\label{Eq:Wcoeff}
w_{kl}^i = \frac{1}{2} \mg^{im}
	(\partial_{v_l} \mg_{mk} + \partial_{v_k} \mg_{ml} - \partial_{v_m} \mg_{kl}
	+ c_{mkl} + c_{mlk} - c_{klm}),
\end{equation}
where $c_{klm} = \mg(\Ld{v_k, v_l}, v_m)$ 
are the \emph{commutation coefficients}
and  $\Ld{\mdot,\mdot}$ is the Lie derivative.
Let $\conform^a_i = w^{a}_{ji} e^j$
be the connection $1$-form, 
and define $\conn^\mg v_j = \conform^a_j \tensor v_a$.
Thus, we obtain the Levi-Civita connection
almost-everywhere in $\cM$
as a map
$\conn^\mg: \Ck{\infty}(\tanb\cM) \to \Sect(\cotanb\cM \tensor \tanb\cM)$.
Note that since $\mg$ is only locally Lipschitz, 
we have that smooth sections are mapped to 
locally bounded  (1,1)-tensors . When the context is clear, we often simply 
denote $\conn^\mg$ by $\conn$.

A manifold $(\cM,\mg)$
is said to be \emph{Spin} if it admits
a spin structure 
$\xi: \Prin{\Spin}(\tanb\cM) \to \Prin{\SO}(\tanb\cM)$, 
i.e., a $2-1$ covering of the frame bundle.
It is well known that
this occurs if and only if the 
first and second Stiefel-Whitney classes
of the tangent bundle vanish. 
The triviality of the first Stiefel-Whitney
class is equivalent to the orientability
of $\cM$.

For the case of $\cM = \R^n$ with $\mg = \delta$,
the usual Euclidean inner product, we
let $\Spinors\R^n$ denote linear space of
standard complex spinors of dimension $2^{\floor{\frac{n}{2}}}$. 
\Rd 
In odd dimensions, this space corresponds to the 
non-trivial minimal complex irreducible representation 
$\eta: \Spin_n \to \bddlf(\Spinors\R^n)$,
where $\Spin_n$ is the spin group, the double
cover of $\SO_n$, and in even dimension
to $\eta = \eta_+ \oplus \eta_-$
where $\eta_{\pm}: \Spin_n \to \bddlf(\Spinors_{\pm} \R^n)$ 
are the representations of the positive/negative half spinors.
For example, see \cite{LM}. \Bk  
We define the standard \emph{(complex) Spin bundle} to be
$$ \Spinors\cM = \Prin{\Spin}(\tanb\cM) \times_{\eta} \Spinors\R^n$$
as the bundle with fibre $\Spinors\R^n$ associated
to $\Prin{\Spin}(\tanb\cM)$ via $\eta$. 
We note that this is the bundle with transition 
functions $(\eta \comp T_{\alpha\beta})$ on $\Omega_\alpha \intersect \Omega_\beta \neq \emptyset$
for $\Omega_\alpha$ and $\Omega_\beta$ open sets,
where $T_{\alpha\beta}: \Omega_\alpha \intersect \Omega_\beta \to \Spin_n$
are transition functions for $\Prin{\Spin}(\tanb\cM)$.

The representation $\eta$ induces an action 
$\rep: \Cliff\cM \to \Spinors\cM$. When $n$ is odd, there are two such 
multiplications up to equivalence opposite from each other, 
and for $n$ even, there is exactly one up to equivalence.
Fixing such a Clifford action,
$\Spinors\cM$ has an induced hermitian metric
$\sinprod{\cdot,\cdot}$, pointwise unique up to scale
satisfying $\sinprod{X \rep \phi, \psi} = - \sinprod{\phi, X \rep \psi}$
for all $X \in \tanb_x\cM$ and $\phi, \psi \in \Spinors_x \cM$
for every $x \in \cM$. See Proposition 1.2.1 and 1.2.3 in \cite{Ginoux}.

Let $E(e_1, \dots, e_n)$ be an orthonormal frame for $\tanb\cM$
and $\set{\spin{e}_\alpha}$ be the induced 
orthonormal spin frame on $\Spinors\cM$.
Let $\conform^a_i$ be the connection $1$-form in $E$
and define the connection 
$\Sconn: \Ck{\infty}(\Spinors\cM)  \to \Lp[loc]{\infty}(\cotanb\cM \tensor\Spinors\cM)$
by writing
\begin{equation}
\label{Eq:SConn} 
\Sconn \spin{e}_\alpha = \frac{1}{2} \sum_{b < a} \conform^a_b \tensor (e_b\rep  \e_a \rep \spin{e}_\alpha).
\end{equation}
This connection satisfies the two following properties:
\begin{enumerate}[(i)]
\item it is almost-everywhere compatible with the induced
spinor metric $\sinprod{\mdot,\mdot}$, and
\item it is a \emph{module derivation}: whenever $X \in \Ck{\infty}(\tanb\cM)$,
$$ \Sconn[X](\omega \rep \psi) = (\conn[X] \omega) \rep \psi + \omega \rep (\Sconn[X] \psi)$$
holds almost-everywhere
for every $\omega \in \Ck{\infty}(\Cliff \cM)$ and $\psi \in \Ck{\infty}(\Spinors\cM)$.
\end{enumerate}

We refer the reader to \S1.2 in \cite{Ginoux} for a exposition
of these ideas, as well as Chapter 2, \S3 to \S5 in \cite{LM}
for a detailed overview, noting that their
proofs in the smooth setting hold in our setting almost-everywhere.

Write 
\begin{equation}
\label{Eq:Conn2Form} 
\conform^2_{E} =  \frac{1}{2} \sum_{b < a} \conform^a_b \tensor e_b \rep e_a
\end{equation} 
to denote the lifting of the connection $2$-form
$\frac{1}{2} \sum_{b < a} \conform^a_b \tensor e_b \wedge e_a$ to $\Spinors\cM$,
and  where $E$ is 
used to denote the dependence on the frame $E(e_1, \dots, e_n)$.
By $\SDir_\mg$ denote the associated Atiyah-Singer Dirac
operator given by the expression
\begin{equation}
\label{Eq:ASDirac}
\SDir_\mg \spin{e}_\alpha = e^j \rep \Sconn[e_j] \spin{e}_\alpha = e^j \rep \conform^2_E(e_j)\rep \spin{e}_\alpha,
\end{equation}
so that $\SDir_\mg(\psi^\alpha \spin{e}_\alpha) = (\conn[e_j] \psi^\alpha)\ e^j \rep \spin{e}_\alpha 
+ \psi^\alpha e^j \rep \Sconn[e_j] \spin{e}_\alpha.$
Note that,
\begin{equation}
\label{Eq:ProdRuleDirac}
\SDir_\mg(\eta \psi) = (\extd \eta)\rep \psi + \eta \SDir_\mg(\psi)
\end{equation} 
for every $\eta \in \Ck{\infty}(\cM)$ and $\psi \in \Ck{\infty}(\Spinors\cM)$
and, as a consequence of the aforementioned
module-derivation property of the connection $\Sconn$ on $\Spinors \cM$, 
\begin{equation}
\label{Eq:ProdRuleDirac2}
\SDir_\mg(\omega \rep \psi) = (\Dir_H \omega) \rep \psi  -\omega \rep \SDir_\mg \psi - 2 \conn[\omega^\sharp] \psi
\end{equation}
for all $\omega \in \Ck{\infty}(\Cliff\cM)$ and $\psi \in \Ck{\infty}(\Spinors\cM)$, 
where $\Dir_H  = \extd + \adj{\extd}$ is the Hodge-Dirac operator, and 
 $\sharp: \cotanb\cM \to \tanb\cM$ given by 
$\omega^\sharp = \mg(\omega, \cdot)$.

Next, let $(\cN,\mh)$ be another Spin manifold with a 
smooth differentiable structure and $\mh$
at least $\Ck{0,1}$. Suppose that $\zeta: \cM \to \cN$
is a $\Ck{1,1}$-diffeomorphism
and let $\Spinors \cN$ denote 
the complex standard spin bundle of $\cN$
obtained via $\eta$.
Following \cite{Bourguignon}, we define an
induced unitary map of spinors $\spin{\U}: \Spinors\cM \to \Spinors\cN$.
Let $P = \pushf{\zeta}: \tanb\cM \to \tanb\cN$.
Then, the pullback metric is
$\mgt(u,v) = \mh(Pu, Pv)$ and we have that
$\mgt(u,v) = \mg((\adj{P}_\mg P)u, v)$, \Rd
where $\adj{P}_\mg$ is the adjoint of $P$, 
and this expression \Bk is readily checked to be a metric of class $\Ck{0,1}$.
On letting $\U = P(\adj{P}_\mg P)^{-\frac{1}{2}}$,
we have that 
$\mh(\U u, \U v) = \mg(u, v).$
So, $\U: (\tanb\cM,\mg) \to (\tanb\cN,\mh)$
is an isometry of class $\Ck{0,1}$.
By $\U(x)$, we denote the induced
linear isometry
$\U(x): (\tanb_x \cM, \mg) \to (\tanb_{\zeta(x)}\cN, \mh)$.

Since $\zeta$ is a homeomorphism, 
an open set $\Omega \subset \cM$ is contractible if and only 
if $\zeta(\Omega) \subset \cN$ is contractible. 
For an orthonormal frame $E(e_1, \dots, e_n) \in \Prin{\SO}(\tanb\cM)$
in $\Omega$,  we obtain $\U E (e_1, \dots, e_n) \in \Prin{\SO}(\tanb\cN)$.
Lifting $E$ and $\U E$ through the spin structures
locally, we obtain two possible maps
 $\spin{\U}_{\Spin, \Omega}: \Prin{\Spin}(\cM) \to \Prin{\Spin}(\cN)$ 
differing by a sign. 
We say that the bundles $\Spinors\cM$ and $\Spinors\cN$
are \emph{compatible} if  $\spin{\U}_{\Spin, \Omega}$  induces a well-defined
global unitary map $\spin{\U}:  \Spinors\cM \to \Spinors\cN$.
By examining the local expression, we see that 
$\spin{\U}: \Spinors\cM \to \Spinors \cN$
and $\spin{\U}^{-1}: \Spinors \cN \to \Spinors \cM$
are $\Ck{0,1}$ maps.

Finally, we say that
$\mg$ and $\mh$ are $C$-close for some $C \geq 1$,
if for all $x \in \cM$,
$$
C^{-1} \modulus{u}_{\mg(x)} 
	\leq   \modulus{\pushf{\zeta}u}_{\mh(\zeta(x))} 
	\leq  	C \modulus{u}_{\mg(x)}.$$
Define
\begin{equation}
C_{L} = \inf\set{C \geq 1: \mg\text{ and }\mh\text{ are $C$-close}}
\quad\text{and}\quad  \met_{M}(\mg,\pullb{\zeta} \mh) = \log(C_L). 
\end{equation}
The map $\met_M$ is readily verified to be a distance-metric on the space
of metrics. 

\Rd What follows is the main the result of this section. 
In fact, this theorem was the original motivation of this paper, 
whereas Theorem \ref{Thm:Main} is a natural generalisation.
As aforementioned, we anticipate the more general result to have
wider implications, particularly to Dirac operators that arise
through twisting the spin bundle by other natural vector
bundles. The analysis of such objects is beyond the scope of this paper
and hence, we focus on the particular case of the Atiyah-Singer
Dirac operator.\Bk

\begin{theorem}
\label{Thm:MainApp}
Let $\cM$ be a smooth Spin manifold with smooth, complete metric $\mg$
with Levi-Civita  connection $\conn^\mg$,
let $\cN$ be a smooth Spin manifold with a $\Ck{0,1}$ 
metric $\mh$, and $\zeta: \cM \to \cN$  a $\Ck{1,1}$-diffeomorphism
with $\met_M(\mg, \pullb{\zeta} \mh) \leq 1$. 
 We assume that the spin bundles $\Spinors\cM$ and $\Spinors\cN$ 
are compatible.  Moreover, suppose that the following hold:
\begin{enumerate}[(i)]
\item 
\label{Hyp:MainAppFirst}
\label{Hyp:Inj}
there exists $\kappa > 0$ such that $\inj(\cM,\mg) \geq \kappa$,
\item 
\label{Hyp:Curv}
there exists $C_{R} > 0$ such that 
	$\modulus{\Ric_\mg} \leq C_R$ and $\modulus{\conn^\mg \Ric_\mg} \leq C_R$, 
\item 
\label{Hyp:MainAppLast}
there exists  $C_{\mh} > 0$ such that $\modulus{\conn^\mg (\pullb{\zeta}\mh)} \leq C_{\mh}$
	almost-everywhere.
\end{enumerate}

Then, for $\omega \in (0, \pi/2)$, $\sigma > 0$, whenever $f \in \Hol^\infty(\OSec{\omega,\sigma})$, 
we have the perturbation estimate
$$
\norm{f(\SDir_\mg) - f(\spin{\U}^{-1}\SDir_\mh \spin{\U})}_{\Lp{2} \to \Lp{2}}	
	\lesssim \norm{f}_{\infty} \met_M(\mg, \pullb{\zeta}\mh) $$
where the implicit
constant depends on $\dim\cM$ and the constants appearing
in \ref{Hyp:MainAppFirst}-\ref{Hyp:MainAppLast}. 
\end{theorem}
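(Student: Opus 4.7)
The plan is to deduce this from Theorem \ref{Thm:Main} applied to $\Dirb = \SDir_\mg$ and $\Dirp = \spin{\U}^{-1}\SDir_\mh \spin{\U}$. Since $\spin{\U}$ is pointwise unitary, $\Dirp$ is self-adjoint on $\Lp{2}(\Spinors\cM)$ and unitarily equivalent to $\SDir_\mh$, so the two remaining tasks are (a) to verify hypotheses \ref{Hyp:First}-\ref{Hyp:Last} with constants depending only on $\dim\cM$, $\kappa$, $C_R$, $C_\mh$, and (b) to establish the structural decomposition \eqref{Def:Struc} with $\norm{A}_\infty \lesssim \met_M(\mg,\pullb{\zeta}\mh)$.

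For the geometric hypotheses: finite-dimensionality is trivial; \eqref{Def:Eloc} follows from the Bishop-Gromov comparison theorem applied to the Ricci bound; and \eqref{Def:Ploc} is then standard (via Buser's argument). For the GBG structure on $\cotanb\cM$ and $\Spinors\cM$, I would invoke Hebey's results in \cite{Hebey}: under the assumed lower bound on the injectivity radius and the Ricci bound, one obtains harmonic coordinate charts of uniform size in which $\mg_{ij}$ enjoys uniform $\Ck{1,\alpha}$ control, and hypothesis (ii) upgrades this to uniform $\Ck{2}$ control. Orthonormalising the coordinate basis produces $\Ck{0,1}$ GBG frames for $\tanb\cM$ and $\cotanb\cM$, and, using the compatibility of the spin structures, these lift to $\Ck{0,1}$ GBG frames for $\Spinors\cM$. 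The required bounds on $\modulus{\conn e_j}$ and $\modulus{\conn \nu_j}$ reduce to uniform bounds on the Christoffel symbols in the harmonic chart, which are controlled by $C_R$.

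For the operator hypotheses: (A6) follows from the product rule \eqref{Eq:ProdRuleDirac}, giving $\comm{\SDir_\mg,\eta}\psi = (\extd\eta)\rep\psi$, and carries over to $\Dirp$ since multiplication by $\eta$ commutes with $\spin{\U}$. Hypothesis (A7) follows from \eqref{Eq:ASDirac}, as the connection $2$-form is uniformly bounded in any GBG frame. The essential self-adjointness and the norm equivalence required by (A8) is a classical Chernoff-type result for the Atiyah-Singer operator on a complete Riemannian manifold; the statement for $\Dirp$ then follows by unitary equivalence together with the $\Ck{0,1}$ control on $\spin{\U}$. For the Riesz-Weitzenb\"ock bound \eqref{Def:Weitz} I would follow Bunke \cite{Bunke} as refined by \cite{Hebey}: the Lichnerowicz formula $\SDir_\mg^2 = \conn^\ast \conn + \tfrac14\mathrm{Scal}_\mg$ controls $\norm{\conn u}$, and a Bochner-type commutator computation then extracts $\conn^2 u$ in terms of $\SDir_\mg^2 u$, $\conn u$ and $u$ with an error term controlled by $\modulus{\conn\Ric_\mg}$---this is where the stronger bound in (ii) is essential, and Hebey's harmonic coordinate improvement is what permits a pure Ricci (rather than full Riemann) bound.

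The main obstacle is (b). Writing $\Dirp - \Dirb$ in a harmonic chart and common lifted spin frame produces terms involving differences of Clifford products, of spin frame vectors, and of connection $1$-forms between $(\cM,\mg)$ and $(\cM,\pullb{\zeta}\mh)$. The undifferentiated differences are pointwise bounded by $\norm{\mgt-\mg}_\infty \lesssim \met_M(\mg,\pullb{\zeta}\mh)$ and are absorbed into $A_1$ and $A_3$. The genuine difficulty is the difference of Christoffel symbols, which depends on first derivatives of $\mgt - \mg$ and is merely uniformly bounded by hypothesis (iii), not small. This is resolved by the product-rule trick pointed out after \eqref{Eq:Struc}: any contribution of the form $(\partial B)\psi$ with $B$ built from $\mgt - \mg$ is rewritten as $\partial(B\psi) - B(\partial\psi)$, so the offending coefficient is absorbed into $A_1\conn + \divv A_2$ with $\norm{A_1}_\infty$ and $\norm{A_2}_\infty$ both bounded by $\met_M(\mg,\pullb{\zeta}\mh)$. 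Carrying out this rearrangement globally, patching via a partition of unity subordinate to the GBG atlas, and checking that the resulting $A_1, A_2, A_3$ really are multiplication operators of the type required by \eqref{Def:Coeff} is the step I expect to consume the bulk of the work.
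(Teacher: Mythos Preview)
Your overall strategy matches the paper's: reduce to Theorem~\ref{Thm:Main}, verify \ref{Hyp:First}--\ref{Hyp:Last} via harmonic coordinates, and establish the structural decomposition \eqref{Def:Struc} using the product-rule trick. Two points deserve correction, however.

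First, the claim that $\Dirp = \spin{\U}^{-1}\SDir_\mh\spin{\U}$ is self-adjoint because $\spin{\U}$ is pointwise unitary is not correct: $\spin{\U}$ is fibrewise unitary but not $\Lp{2}$-unitary, since $d\mu_\mg$ and $\pullb{\zeta}d\mu_\mh$ differ by the Jacobian factor $\det\B$. Thus $\Dirp$ is only \emph{similar} to the self-adjoint operator $\SDir_\mh$, and you must either invoke Remark~\ref{Rem:Main} (which relaxes self-adjointness to similarity) or work instead with the genuine $\Lp{2}$-unitary $\spin{\U}_2 = \sqrt{\det\B}\,\spin{\U}$ as discussed in the remark following the theorem.

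Second, the essential self-adjointness and domain identification $\dom(\SDir_\mh) = \Sob{1,2}(\Spinors\cN)$ is \emph{not} covered by Chernoff's theorem, which requires a smooth metric; here $\mh$ is only $\Ck{0,1}$. The paper handles this separately (Propositions~\ref{Prop:EssSA} and~\ref{Prop:NormCmp}): essential self-adjointness is proved by a direct argument using Fourier analysis in coordinate charts, and the domain characterization is obtained not via Lichnerowicz (which would demand $\Ck{1,1}$ regularity of $\mh$ and a lower scalar-curvature bound) but by transferring the harmonic-coordinate cover and partition of unity from $(\cM,\mg)$ to $(\cN,\mh)$ via the $C$-closeness. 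Your sketch of \ref{Hyp:Weitz} via Lichnerowicz plus a Bochner commutator is plausible for the smooth metric $\mg$, though the paper instead argues by localisation to harmonic balls and Plancherel estimates on the second-order principal part; either route should work for $\SDir_\mg$.
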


\begin{remark}
The map $\spin{\U}$ is the fibrewise unitary map $\Spinors_p\cM \to \Spinors_{\zeta(p)}\cN$.
For the $\Lp{2}(\Spinors\cM) \to \Lp{2}(\Spinors\cN)$ unitary operator
 $\spin{\U}_2 = \sqrt{\det \B}\ \spin{\U}$,  we also have an estimate of
$\norm{f(\SDir_\mg) - f(\spin{\U}_2^{-1}\SDir_\mh \spin{\U}_2)}_{\Lp{2} \to \Lp{2}}$
as in Theorem \ref{Thm:MainApp}. 
Either this can be seen by inspection of the proof, noting Remark \ref{Rem:Main},
or by using  the functional calculus to write 
$$f(\SDir_\mg) - f(\spin{\U}_2^{-1} \SDir_\mh \spin{\U}_2)
	= (f(\SDir_\mg) - f(\spin{\U}^{-1} \SDir_\mh \spin{\U})) +
		(\spin{\U}^{-1}f(\SDir_\mh) \spin{\U} - \spin{\U}_2^{-1}f(\SDir_\mh) \spin{\U}_2),$$ 
noting that the second term is straightforward to bound.
\end{remark}

\begin{remark}
 On fixing a Spin structure $\xi: \Prin{\Spin}(\tanb\cM) \to \Prin{\SO}(\tanb\cM)$,
we obtain an induced
$\xi' = \U \xi: (\xi^{-1} \U^{-1}\Prin{\SO}(\tanb\cN)) \to \Prin{\SO}(\tanb\cN)$
which is a Spin structure for $\cN$. Since $\U:\Prin{\SO}(\tanb\cM) \to \Prin{\SO}(\tanb\cN)$
is a homeomorphism, it is an easy matter to verify 
that the bundles $\Spinors \cN = (\xi^{-1} \U^{-1} \Prin{\SO}(\tanb\cN)) \times_{\eta} \Spinors\R^n$
and $\Spinors\cM$ are compatible.   

  For the case of $\cM = \cN$, where $\Spinors\cM$ and $\Spinors\cN$
denote the respective bundles constructed via $\mg$ and $\mh$, we obtain this 
theorem for $\zeta = \id$. If further $\cM = \cN$ is compact, then \ref{Hyp:MainAppFirst}-\ref{Hyp:MainAppLast} in 
the hypothesis of the theorem are automatically satisfied, and thus we obtain the
result under the sole geometric assumption that $\met_M(\mg, \mh) \leq 1$.  
\end{remark}

\begin{proof}[Proof of Theorem \ref{Thm:MainApp}]
We apply Theorem \ref{Thm:Main}, to the operators
$\Dir = \SDir_\mg$ 
and $\Dirp = \spin{\U}^{-1}\SDir_\mh \spin{\U}$,
setting $\cV = \Spinors\cM$.

The assumptions of completeness of $\mg$ along
with \ref{Hyp:Inj} and \ref{Hyp:Curv}
imply \eqref{Def:Eloc} and \eqref{Def:Ploc}
immediately (see Theorem 1.1 in \cite{Morris3}).
Moreover, there exists
$r_H, C_H > 0$, such that
for all $x \in \cM$ such that $\psi_x: B(x,r_H) \to \R^n$
are coordinate charts such that inside
each chart, $\norm{\mg_{ij}}_{\Ck{2}(B(x, r_H))} \leq C_H$
and $\mg \simeq \pullb{\psi}_x \delta_{\R^n}$
with constant $C_H$. 
See Theorem 1.2 in \cite{Hebey}.

This $\Ck{2}$-control of the metric
inside each $B(x,r_H)$ means that coordinate
frames $\set{\partial_{x_i}}$ satisfy
$\modulus{\conn \partial_{x_i}} \lesssim 1$
and $\modulus{\conn^2 \partial_{x_i}} \lesssim 1$.
On orthonormalisation of these frames
in each $B(x,r_H)$ via the Gram-Schmidt
algorithm yields frames $\set{e_i}$ for $\tanb\cM$, 
$\set{e^i}$ for $\cotanb\cM$ (the dual frame),
and $\set{\spin{e}_\alpha}$ for $\Spinors \cM$.
These are 
smooth GBG frames with constant $C_{\cotanb\cM} = C_{\Spinors\cM} = 1$,
and with $\modulus{\conn e_j}, \modulus{\conn^2 e_j} \lesssim 1$
and $\modulus{\conn \spin{e}_\alpha}, \modulus{\conn^2 \spin{e}_\alpha} \lesssim 1$.
The constants only depend on \ref{Hyp:Inj} and \ref{Hyp:Curv}.
Thus, we have verified the hypotheses \ref{Hyp:First}-\ref{Hyp:VecGBG}.

The hypothesis \ref{Hyp:PDO} follows with $\Ck{\infty}$
coefficients due to the derivation property \eqref{Eq:ProdRuleDirac} of 
$\SDir_\mg$  with constant $C_D = 1$, 
and $\ref{Hyp:DirGBG}$ follows from the
fact that $\modulus{\SDir_\mg \spin{e}_\alpha} \lesssim 
\modulus{\conn \spin{e}_\alpha} \lesssim 1$.

The hypothesis \ref{Hyp:Dom} is proved in \S\ref{Sec:DiracSob}
as  Proposition \ref{Prop:NormCmp}, which makes use of the
completeness of $\mg$, $C$-closeness of $\mh$ to $\mg$ and the
geometric assumptions \ref{Hyp:Inj} and \ref{Hyp:Curv}
The hypothesis \ref{Hyp:Weitz} is proved in \S\ref{Sec:Weitz}
as Proposition \ref{Prop:RW}. It depends
on the crucial covering Lemma \ref{Lem:Cover}
which is a consequence of completeness of $\mg$
coupled with \ref{Hyp:Inj} and \ref{Hyp:Curv}.

The  remaining hypothesis to verify in Theorem \ref{Thm:Main}
is the perturbation structure \ref{Eq:Struc}, which is
done in \S\ref{Sec:Struc}.
\end{proof}

Through the remaining sections, we assume 
the hypothesis of Theorem \ref{Thm:MainApp} to hold.

\subsection{The domain of the Dirac operator as the Spinor Sobolev space}
\label{Sec:DiracSob}

In this section, 
we establish the essential-self adjointness of $\SDir_\mg$
and $\SDir_\mh$. By the smoothness
of $\mg$, it is well known that this operator, 
and all of its positive powers, are essentially-self
adjoint. For instance, see \cite{Chernoff}.
Thus, we focus only on $\SDir_\mh$ which arises
from the lower regularity metric.

First, we assert $\SDir_\mh$ is a symmetric operator on 
$\Ck[c]{\infty}(\Spinors\cN)$.
This is immediate since we assume that $\mh$ is at least
$\Ck{0,1}$, and therefore, the remaining divergence term
in when computing the symmetry pointwise almost-everywhere
is the divergence of a compactly supported Lipschitz vectorfield.
A particular consequence of symmetry is that $\SDir_\mh$ is
a closable operator by the density of
$\Ck[c]{\infty}(\Spinors\cN)$ in
$\Lp{2}(\Spinors\cN)$. 
Operator theory yields that $\close{\SDir_\mh} = \biadj{\SDir}_\mh$.
With these observations in mind, we prove the following.

\begin{proposition}
\label{Prop:EssSA}
The operator $\SDir_\mh$ on $\Ck[c]{\infty}(\Spinors\cN)$ is essentially
self-adjoint.
\end{proposition}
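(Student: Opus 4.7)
The plan is to verify that the deficiency subspaces $\ker(\adj{\SDir}_\mh \mp \imath\iden)$ are trivial, by a classical Gaffney/Chernoff-style cutoff argument adapted to the $\Ck{0,1}$ regularity of $\mh$. Since $\SDir_\mh$ is symmetric and densely defined on $\Ck[c]{\infty}(\Spinors\cN)$, triviality of both deficiency spaces yields $\close{\SDir}_\mh = \biadj{\SDir}_\mh$, i.e., essential self-adjointness.

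First I would produce the cutoff functions. Because $\zeta$ is a $\Ck{1,1}$-diffeomorphism with $\met_M(\mg,\pullb{\zeta}\mh)\leq 1$, the metric $\pullb{\zeta}\mh$ on $\cM$ is $\Ck{0,1}$-bi-Lipschitz to the smooth complete metric $\mg$, hence $(\cN,\mh)$ is itself complete. Using the distance function to a fixed basepoint (or a standard McShane extension argument to produce globally $1$-Lipschitz proper functions on a complete length space), I would build $\eta_k \in \Lipp_c(\cN)$ with $0 \leq \eta_k \leq 1$, $\eta_k \to 1$ pointwise, and $\snorm{\extd \eta_k}_{\Lp{\infty}} \to 0$.

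The technical heart of the argument is the product-rule lemma: for any $u \in \dom(\adj{\SDir}_\mh)$ and any $\eta \in \Lipp_c(\cN)$, the section $\eta u$ lies in $\dom(\close{\SDir}_\mh)$ and
\[
  \close{\SDir}_\mh(\eta u) = (\extd \eta)\rep u + \eta\, \adj{\SDir}_\mh u.
\]
This I would obtain by Friedrichs' mollifier trick performed in the $\Ck{0,1}$ GBG charts constructed in the proof of Theorem \ref{Thm:MainApp}: since $\SDir_\mh$ has locally Lipschitz coefficients inside such a chart, the standard commutator estimate $[\SDir_\mh,\rho_\epsilon\ast]\to 0$ in $\Lp{2}$ strong operator norm (as $\epsilon \downarrow 0$) goes through. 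Approximating $\eta u$ by $\rho_\epsilon\ast(\eta u)$ and using the distributional identity \eqref{Eq:ProdRuleDirac} identifies the limit.

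Given this lemma, suppose $\adj{\SDir}_\mh u = \imath u$. Then $\eta_k u \in \dom(\close{\SDir}_\mh)$, and the symmetry of $\close{\SDir}_\mh$ on its domain, combined with the lemma, yields
\[
  2\imath \snorm{\eta_k u}^2 = \inprod{\eta_k u, (\extd \eta_k)\rep u} - \inprod{(\extd \eta_k)\rep u, \eta_k u}.
\]
Since Clifford multiplication is fibrewise bounded by the cotangent norm, the right-hand side is controlled by $2\,\smodulus{c}\,\snorm{\extd \eta_k}_{\Lp{\infty}} \snorm{\eta_k u}\, \snorm{u}$, giving $\snorm{\eta_k u} \lesssim \snorm{\extd \eta_k}_\infty \snorm{u} \to 0$. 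Dominated convergence forces $\snorm{\eta_k u} \to \snorm{u}$, hence $u = 0$. The same argument with $\imath$ replaced by $-\imath$ disposes of the other deficiency subspace.

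The main obstacle I anticipate is the product-rule lemma under the mere $\Ck{0,1}$ regularity of $\mh$: one must check that Friedrichs mollification really does produce a strong-graph approximation of $\eta u$ in $\close{\SDir}_\mh$, which requires that the commutator of the multiplication operators $w^i_{kl}$ (built from first derivatives of $\mh$) with convolution tends to zero on $\Lp{2}$. This is exactly the classical Friedrichs commutator lemma and is available for Lipschitz coefficients; the other ingredients (completeness, cutoff construction, fibrewise Clifford bound) are comparatively routine.
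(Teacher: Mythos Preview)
Your argument is correct and follows the classical Chernoff--Gaffney deficiency-index approach: cut off to compact sets, use Friedrichs' mollifier lemma to land $\eta u$ in the \emph{minimal} domain $\dom(\close{\SDir}_\mh)$, and then exploit symmetry to kill the deficiency vectors. The paper takes a different route. Rather than compute deficiency indices, it proves directly that $\Ck[c]{\infty}(\Spinors\cN)$ is graph-dense in $\dom(\adj{\SDir}_\mh)$. Like you, it first reduces to compactly supported $\psi \in \dom(\adj{\SDir}_\mh)$ via cutoffs, but then --- instead of invoking Friedrichs' commutator lemma --- it runs a local Fourier argument (using that Clifford multiplication by $e^i$ is constant in the induced spin frame, hence commutes with the Fourier transform) to deduce the elliptic regularity statement $\dom_{\rm c}(\adj{\SDir}_\mh) \subset \Sob{1,2}(\Spinors\cN)$; mollification in coordinates then finishes.

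The distinction is in what each approach buys. Yours is more self-contained for essential self-adjointness alone, and the technical obstacle you flag (Friedrichs' lemma under $\Ck{0,1}$ regularity of $\mh$) is precisely the right one: the principal coefficients are Lipschitz in coordinates, so the classical commutator estimate applies. The paper's detour through $\Sob{1,2}$ is not wasted, however --- the local regularity statement feeds directly into Proposition~\ref{Prop:NormCmp}, where the domain is identified with $\Sob{1,2}(\Spinors\cN)$. Both arguments ultimately rest on the same hypothesis $\mh \in \Ck{0,1}$: you use it to make $[\SDir_\mh,\rho_\epsilon\ast]\to 0$ strongly, while the paper uses it to peel off the $\Lp{\infty}$ zeroth-order term $\conform^2$ and reduce to a constant-coefficient principal part amenable to Plancherel.
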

\begin{proof}
The conclusion is established 
if we prove that $\Ck[c]{\infty}(\Spinors\cN)$
is dense in 
$$\dom(\adj{\SDir}_{\mh}) = \set{\psi \in \Lp{2}(\Spinors\cN): 
	\modulus{\inprod{\psi, \SDir_{\mh} \phi}} \lesssim \norm{\phi},\ \phi\in \Ck[c]{\infty}(\Spinors\cN)}.$$

The first reduction we make is to note that 
$\dom_{\rm c}(\adj{\SDir}_{\mh}) = \set{ u \in \dom(\adj{\SDir}_{\mh}): \spt u\ \text{compact}}$
is dense in $\dom(\adj{\SDir}_{\mh})$. This is a direct 
consequence of the fact that  we  are able to find a $C$-close smooth metric $\mh'$,
which is complete since $\mh$ is complete,
and for this metric $\mh'$,  there exists a sequence 
of smooth functions $\rho_k: \cN \to [0,1]$ with $\spt \rho_k$ compact,
with $\rho_k \to 1$ pointwise, and
$\modulus{\extd \rho_k}_{\mh'} \leq C^{-1} 1/k$
for almost-every $x \in \cN$  
(and hence $\modulus{\extd \rho_k}_{\mh} \leq 1/k$ for almost-every  
$x \in \cN$). 
See Proposition 2.3 in \cite{BMc}
or Proposition 1.3.5 in \cite{Ginoux} for the existence
of such a sequence. 
The aforementioned density is then 
simply a consequence of
noting the formula $\adj{\SDir}_{\mh}(f\phi) = f \adj{\SDir}_{\mh}(\phi) + (\extd f) \rep \phi$,
for $f \in \Ck[c]{\infty}(\cN)$ and $\phi \in  \dom(\adj{\SDir}_{\mh})$.

Next, for $\psi \in \dom_{\rm c}(\adj{\SDir}_{\mh}) \intersect \Sob{1,2}(\Spinors\cN)$, 
we can write 
$\psi = \sum_{j=1}^N \psi_j$ where $\psi_j = \eta_j \psi$,
where $\eta_j$ is a finite partition of unity 
and $\spt \eta_j$ is contained in a coordinate
patch.
On obtaining a sequence $\psi^\delta \in \Ck[c]{\infty}(\Spinors\cN)$ 
by obtaining a mollification $\eta_j^\delta$  of $\eta_j$ inside each coordinate patch,
using the fact that
$\psi \in \Sob{1,2}(\Spinors\cN)$ so that 
$\norm{\adj{\SDir}_{\mh} \psi} = \norm{\SDir_{\mh} \psi} \lesssim \norm{\conn \psi}$,
we have that $\psi^\delta \to \psi$ in $\norm{\mdot}_{\adj{\SDir}_{\mh}}$.

The proof is then complete if we show
that whenever $\psi \in \dom_{\rm c}(\adj{\SDir}_{\mh})$, 
we have that $\psi \in \Sob{1,2}(\Spinors\cN)$. 
By the compactness of $\spt \psi$, we
assume without the loss of generality that
$\spt \psi$ is contained in a coordinate patch 
corresponding to a ball $B$.
Thus assume that for every $\phi \in \Ck[c]{\infty}(\Spinors\cN)$,
$ \modulus{\inprod{\psi, \SDir_{\mh} \phi}} \lesssim \norm{\phi}.$
In particular, this holds when $\spt \phi \subset B$, 
so let us further assume that. Then, note that
$$ \inprod{\psi, \SDir_{\mh} \phi}
	 = \int_{B} \sinprod{\psi, e^i \rep (\partial_{e_i} \phi^\alpha) \spin{e}_{\alpha}}\ d\mu_\mh
		+ \int_{B} \sinprod{\psi, e^i \rep \frac{1}{2}\conform^2(e_i)\rep \phi}\ d\mu_{\mh},$$
and since $\conform^2 \in \Lp{\infty}(B)$ since $\mh$ is locally Lipschitz, 
we obtain that
$$ \modulus{\int_{B} \sinprod{\psi, e^i \rep (\partial_{e_i} \phi^\alpha) \spin{e}_{\alpha}}\ d\mu_\mh} 
	\lesssim \norm{\phi}.$$
Moreover, letting $\Leb$ denote the Lebesgue measure, 
we have that $d\mu_\mh = \uptheta d\Leb$, 
where $\uptheta = \sqrt{\det \mh}$ is Lipschitz 
inside $B$ since $\mh$ is locally Lipschitz.
Thus
$$ (\partial_{e_i}\phi^\alpha) \uptheta = \partial_{e_i} (\uptheta \phi^\alpha) - (\partial_{e_i}\uptheta) \phi^\alpha,$$
and since $(\partial_{e_i} \uptheta) \in \Lp{\infty}(B)$, we further obtain that
$$\modulus{\int_{B} \sinprod{\psi, e^i \rep \partial_{e_i}(\uptheta \phi^\alpha) \spin{e}_{\alpha}}\ d\Leb} 
	\lesssim \norm{\uptheta \phi}_{\Lp{2}(B,\Leb)}.$$
Now, note that $e^i \rep \spin{e}_\alpha = \eta(e^i) \spin{e}_\alpha$, 
which is a constant expression inside $B$. Identifying $B$ with $\chi(B)$
where $\chi: B \to \R^n$ is the coordinate map,
$$ \widehat{(e^i \rep f)} = e^i \rep \widehat{f}$$
for $f \in \Lp{2}(\Spinors\R^n)$, where $\widehat{f}$ is
the Fourier Transform of $f$.
On extending $\psi$ by
zero to all of $\R^n$, we obtain that for any 
$\phi \in \Ck[c]{\infty}(\R^n, \Spinors\R^n)$,
$$\modulus{\inprod{\psi,  \partial_{e_i}(\theta \phi^\alpha) e^i \rep \spin{e}_{\alpha}}_{\Lp{2}(\Spinors\R^n)}}
	\lesssim \norm{\uptheta \phi}_{\Lp{2}(\Spinors\R^n)}.$$
Then, by Parseval's identity, we have that 
$$ \inprod{\psi, \partial_{e_i}(\theta \phi^\alpha) e^i \rep \spin{e}_\alpha}_{\Lp{2}(\Spinors\R^n)}
	=  \inprod{\widehat{\psi}, e^i \rep \xi_i \widehat{\theta \phi}}_{\Lp{2}(\Spinors\R^n)}.$$
\Rd That is,
$$
\modulus{\inprod{\widehat{\psi}, \xi \rep \widehat{\theta \phi}}_{\Lp{2}(\Spinors\R^n)}}
	\lesssim \norm{\theta \phi}_{\Lp{2}(\Spinors\R^n)}$$
where $\xi  = \xi_i e^i$ and for all $\phi \in \Ck[c]{\infty}(\Spinors\R^n)$.
Since $\uptheta \phi$ is dense in $\Lp{2}(\Spinors\R^n)$, 
we have that $\xi \rep \widehat{\psi} \in \Lp{2}(\Spinors\R^n)$ \Bk
(since vectors act skew-adjointly on spinors)
which implies that $\psi \in \Sob{1,2}(\Spinors\R^n)$.
On recalling that $\spt \psi \subset B$ and that
\Rd $\conform^2 \in \Lp{\infty}(\Forms[1]\cM \tensor \Cliff \cM)$, \Bk we have that 
$\psi \in \Sob{1,2}(B, \Spinors\cN)$.
\end{proof} 

To characterise the domains of the operators $\SDir_\mg$
and $\SDir_\mh$ as $\Sob{1,2}$, we first note the following lemma. 

\begin{lemma}
\label{Lem:Cover}
On the manifold $(\cM,\mg)$, 
there exists a sequence of points $x_i$
and a smooth partition of unity
$\set{\eta_i}$ uniformly locally finite and 
subordinate to $\set{B(x_i, r_H)}$
satisfying $\sum_{i} \modulus{\conn^j \eta_i}) \leq C_H$
for $j = 0, ..., 3$. Moreover, 
there exists $M > 0$ such that $1 \leq M \sum_i \eta_i^2$.
\end{lemma}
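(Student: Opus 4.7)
The plan is to combine Hebey's harmonic coordinate result with a standard Vitali-type maximal packing argument, then transfer Euclidean bump functions to $\cM$ and normalise.

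\textbf{Step 1: Covering.} Under completeness together with the injectivity radius lower bound \ref{Hyp:Inj} and Ricci bounds \ref{Hyp:Curv}, Theorem 1.2 in \cite{Hebey} furnishes a uniform radius $r_H > 0$ such that at every $x \in \cM$ there is a harmonic coordinate chart on $B(x, r_H)$ in which $\mg_{ij}$ is bounded with all derivatives up to a sufficiently high order by a constant depending only on the data of \ref{Hyp:Inj}--\ref{Hyp:Curv}. (The bound $\modulus{\conn^\mg \Ric_\mg} \leq C_R$ is what lets us pass from $\Ck{2}$ to $\Ck{3}$ control via elliptic regularity applied to the harmonic-coordinate equation $\Delta_\mg \mg_{ij}$ = lower order.) Choose a maximal subset $\{x_i\} \subset \cM$ with the property that the balls $B(x_i, r_H/4)$ are pairwise disjoint; this exists by Zorn's lemma. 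By maximality the balls $B(x_i, r_H/2)$ cover $\cM$.

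\textbf{Step 2: Uniform local finiteness.} For any $x \in \cM$, the number $N(x)$ of indices $i$ with $B(x_i, r_H) \cap B(x, r_H) \neq \emptyset$ satisfies
\[
N(x) \cdot \min_i \mu(B(x_i, r_H/4)) \leq \mu\bigl(B(x, 3r_H)\bigr),
\]
since the balls $B(x_i, r_H/4)$ are disjoint and contained in $B(x, 3r_H)$. By Bishop--Gromov (a consequence of \ref{Hyp:Curv}), both sides are controlled by a constant depending only on $r_H$ and the Ricci bound, yielding a uniform finite bound $N$ on the multiplicity of $\{B(x_i, r_H)\}$.

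\textbf{Step 3: The partition of unity.} Fix a smooth $\chi: [0,\infty) \to [0,1]$ with $\chi = 1$ on $[0, r_H/2]$, $\chi = 0$ on $[3r_H/4, \infty)$. Define $\phi_i(y) := \chi(\modulus{y - x_i}_{\text{harm}})$ in the harmonic chart centred at $x_i$ (and zero outside $B(x_i, r_H)$), where $|\cdot|_{\text{harm}}$ denotes the Euclidean distance in that chart. Since $\mg \simeq \delta$ and $\mg \in \Ck{3}$ uniformly in the chart, the covariant derivatives $\conn^j \phi_i$ for $j = 0, 1, 2, 3$ are bounded pointwise by a constant depending only on $\chi$ and the harmonic-chart data. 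Set
\[
\eta_i := \frac{\phi_i}{\Phi}, \qquad \Phi := \sum_j \phi_j.
\]
By Step 1, $\Phi \geq 1$ everywhere, and by Step 2 the sum defining $\Phi$ has at most $N$ nonzero terms at each point, so $\Phi$ and its covariant derivatives up to order $3$ are uniformly bounded, and $\Phi$ is bounded away from zero uniformly. The quotient rule then yields the pointwise bound $\modulus{\conn^j \eta_i} \lesssim 1$, and summing against local finiteness gives $\sum_i \modulus{\conn^j \eta_i} \leq C_H$ after adjusting constants.

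\textbf{Step 4: The square lower bound.} Since $\sum_i \eta_i = 1$ and at each $x \in \cM$ at most $N$ of the $\eta_i(x)$ are nonzero, Cauchy--Schwarz gives
\[
1 = \Bigl(\sum_i \eta_i(x)\Bigr)^2 \leq N \sum_i \eta_i(x)^2,
\]
so the last claim holds with $M = N$. The main obstacle, and what the proof really hinges on, is obtaining the $\Ck{3}$ control of the metric in harmonic charts, but this is delivered essentially for free by the improved Hebey-type regularity allowed by the extra hypothesis $\modulus{\conn^\mg \Ric_\mg} \leq C_R$.
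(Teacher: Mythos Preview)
Your proof is correct and takes essentially the same approach as the paper: the paper simply cites Proposition~3.2 in \cite{Hebey} for the existence of the uniformly locally finite partition of unity with the derivative bounds (that proposition is precisely the maximal-packing plus normalised-bump-function construction you have written out in Steps~1--3), and then proves the square lower bound by the identical Cauchy--Schwarz argument you give in Step~4. One small remark: controlling $\conn^3\eta_i$ only requires $\Ck{2}$ control of $\mg$ in the harmonic chart (since third covariant derivatives of a scalar involve at most $\partial\Gamma$), not $\Ck{3}$; but $\Ck{2,\alpha}$ control is exactly what the hypothesis $\modulus{\conn^\mg\Ric_\mg}\leq C_R$ buys via harmonic-coordinate elliptic regularity, so your reasoning is sound.
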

\begin{proof}
The proof of this lemma, except for the 
estimate on the sum of squares of the partition of unity, 
is included in the proof 
of Proposition 3.2 in \cite{Hebey}. This is
due  to the completeness of $\mg$ and \ref{Hyp:Inj} 
and \ref{Hyp:Curv}. We prove the remaining estimate, 
by noting that by the uniformly locally finite
property, there exists a constant $M$ such that
for each $x \in \cM$, $1 = \sum_{k = 1}^M \eta_{i_k}(x)$.
Moreover, by Cauchy-Schwarz inequality,
$$ 1 = \cbrac{\sum_{k = 1}^M \eta_{i_k}(x)}^2 
	\leq \cbrac{\sum_{k = 1}^M \eta_{i_k}(x)^2}\cbrac{\sum_{k=1}^M 1^2}
	= M \sum_i \eta_i^2(x).$$
\end{proof}

With this, the following proposition becomes
immediate.
 
\begin{proposition}
\label{Prop:NormCmp}
We have $\dom(\SDir_\mh) = \Sob{1,2}(\Spinors\cN)$ with
$\norm{\SDir_\mh \phi}^2 + \norm{\phi}^2 \simeq \norm{\Sconn \phi}^2 + \norm{\phi}^2$
whenever $\phi \in \Sob{1,2}(\Spinors\cN)$.
A similar conclusion holds for $\SDir_\mg$.
\end{proposition}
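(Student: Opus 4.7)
The plan is to prove the two inclusions separately and extract the norm equivalence along the way. The ``easy'' direction $\Sob{1,2}(\Spinors\cN)\subset\dom(\SDir_\mh)$ with $\norm{\SDir_\mh \phi}\lesssim \norm{\Sconn\phi}$ follows by the pointwise estimate $\modulus{\SDir_\mh \phi}=\modulus{e^j\rep \Sconn_{e_j}\phi}\leq \sqrt{n}\modulus{\Sconn \phi}$, valid in any local orthonormal frame, extended from $\Ck[c]{\infty}(\Spinors\cN)$ by the density statement implicit in Lemma~\ref{Lem:Cover} (compactly supported mollifications using the partition of unity). Because $\SDir_\mh$ is essentially self-adjoint by Proposition~\ref{Prop:EssSA}, this shows the continuous inclusion in one direction.

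For the reverse inclusion, I would localise using the partition of unity $\set{\eta_i}$ from Lemma~\ref{Lem:Cover}. Fix $\phi\in \dom(\SDir_\mh)$. Since $\dom_c(\adj{\SDir}_\mh)$ is dense in $\dom(\adj{\SDir}_\mh)$ in the graph norm (as shown in the proof of Proposition~\ref{Prop:EssSA}), it suffices to treat compactly supported $\phi$. For each $i$, the cutoff $\eta_i\phi$ has support in $B(x_i,r_H)$ and satisfies the Leibniz-type identity
\begin{equation*}
\SDir_\mh(\eta_i\phi)=(\extd\eta_i)\rep \phi+\eta_i\SDir_\mh \phi\in \Lp{2}(\Spinors\cN).
\end{equation*}
Inside the coordinate ball $B(x_i,r_H)$, the Fourier/Parseval argument already carried out in the proof of Proposition~\ref{Prop:EssSA} shows $\eta_i\phi\in \Sob{1,2}(\Spinors\cN)$ with the local bound
\begin{equation*}
\norm{\Sconn(\eta_i\phi)}^2\lesssim \norm{\SDir_\mh(\eta_i\phi)}^2+\norm{\eta_i\phi}^2,
\end{equation*}
where the implicit constant is uniform in $i$ because the harmonic charts and $\Ck{2}$-control of $\mg$ (and $\Ck{0,1}$-control of $\mh$) are uniform in $i$ by hypotheses~\ref{Hyp:Inj}--\ref{Hyp:MainAppLast}.

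To patch these local estimates, I would use the two quantitative properties supplied by Lemma~\ref{Lem:Cover}: the bounds $\sum_i\modulus{\conn^j\eta_i}\leq C_H$ and $1\leq M\sum_i\eta_i^2$, together with uniform local finiteness (which makes $\sum_i\eta_i^2$ bounded above as well). Writing $\Sconn\phi=\sum_i\eta_i\Sconn\phi$ and $\eta_i\Sconn\phi=\Sconn(\eta_i\phi)-(\extd\eta_i)\tensor\phi$, summing over $i$ and invoking the uniformly locally finite overlap yields
\begin{equation*}
\norm{\Sconn\phi}^2\lesssim \sum_i\norm{\Sconn(\eta_i\phi)}^2+\norm{\phi}^2
\lesssim \sum_i\norm{\SDir_\mh(\eta_i\phi)}^2+\norm{\phi}^2
\lesssim \norm{\SDir_\mh \phi}^2+\norm{\phi}^2,
\end{equation*}
where the last step uses $\norm{\SDir_\mh(\eta_i\phi)}^2\lesssim \norm{(\extd\eta_i)\phi}^2+\norm{\eta_i\SDir_\mh\phi}^2$ and sums these two contributions with the partition bounds. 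Combining the two inclusions gives the stated norm equivalence on $\Sob{1,2}(\Spinors\cN)$. The identical argument, simpler because $\mg$ is smooth, yields the statement for $\SDir_\mg$.

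The main obstacle is ensuring the local Sobolev estimate from Proposition~\ref{Prop:EssSA} is truly uniform over the covering, rather than just finite on each patch; this is precisely where the uniform harmonic coordinate control coming from \ref{Hyp:Inj}--\ref{Hyp:Curv} (via Hebey) and the Lipschitz control $C_{\mh}$ of $\mh$ from \ref{Hyp:MainAppLast} enter, so the constants in the Fourier-side estimate of $\xi\rep\widehat{\uptheta\phi}$ are bounded independently of $i$.
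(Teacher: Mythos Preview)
Your argument is essentially the same as the paper's: localise with the partition of unity from Lemma~\ref{Lem:Cover}, obtain the uniform local estimate $\norm{\Sconn(\eta_i\phi)}^2\lesssim\norm{\SDir_\mh(\eta_i\phi)}^2+\norm{\eta_i\phi}^2$ via Fourier theory in each chart, and patch using $1\leq M\sum_i\eta_i^2$ together with the Leibniz rules for $\Sconn$ and $\SDir_\mh$. One point you glossed over that the paper makes explicit: Lemma~\ref{Lem:Cover} produces a partition of unity on $(\cM,\mg)$, not on $(\cN,\mh)$, since the injectivity radius and Ricci bounds are assumed for $\mg$ only; the paper therefore sets $\eta_i=\pullb{\zeta}\eta_i^{\mg}$ and uses $C$-closeness to transfer the gradient bound $\sum_i\modulus{\extd^{\cN}\eta_i}\leq CC_H$ to $\cN$. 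With that adjustment your proof is complete and matches the paper's.
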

\begin{proof}
By Proposition \ref{Prop:EssSA}, it suffices to 
demonstrate the estimate $\norm{\SDir_\mh \psi} + \norm{\psi} \simeq \norm{\conn \psi} + \norm{\psi}$
for $\psi \in \Ck[c]{\infty}(\Spinors\cN)$.
From the definition of the operator $\SDir_\mh$, we obtain
$\norm{\SDir_\mh \psi} \lesssim \norm{\conn \psi}$ for
all $\psi \in \Ck[c]{\infty}(\Spinors\cN)$. Thus, 
$\Sob{1,2}(\Spinors\cN) \embed \dom(\SDir_\mh)$ is a continuous
embedding.

Let the partition of unity given by Lemma \ref{Lem:Cover} for the metric $\mg$
be denoted by $\set{\eta^\mg_i}$.
Define $\eta_i = \pullb{\zeta} \eta_i^\mg = (\eta^\mg_i \comp \zeta^{-1})$.
Now, $\conn \eta_i = \extd^{\cM} \eta_i$ and by the fact
that pullback commutes with the exterior derivative,
we have that 
$ \extd^\cN \eta_i = \extd^\cN \pullb{\zeta}\eta^\mg_i = \pullb{\zeta} \extd^\cM \eta^\mg_i.$
Thus,
$ \sum_{i} \modulus{ \extd^\cN \eta_i} \leq C C_H,$
since $\mg$ and $\mh$ are $C$-close.

Fix $\psi \in \Ck[c]{\infty}(\Spinors\cN)$ so we can write
$\psi = \sum_{i=1}^N \eta_i \psi$. By Fourier theory, we obtain a
constant $C' > 0$ such that
$\norm{\conn (\eta_i \psi)}^2 \leq C' (\norm{\SDir_\mh (\eta_i  \psi)}^2 + \norm{\eta_i \psi}^2 )$
since $\spt \eta_i \subset B(x_i, r_H)$, which corresponds to a
chart for which the metric $\mg$ is uniformly comparable to 
the pullback Euclidean metric.
 
Moreover, note that since $\conn$ is a derivation,
$ \modulus{\eta_i \conn \psi}^2 
	\lesssim \modulus{\extd^\cN \eta_i}^2 \modulus{\psi}^2
		+ \modulus{\conn (\eta_i \phi)}^2,$
and since $\modulus{\conn \psi}^2 \leq M \sum_i \eta_i^2 \modulus{\conn \psi}^2$
pointwise by Lemma \ref{Lem:Cover}, 
\begin{multline*}
\norm{\conn \psi}^2 
	\leq M \int \sum_i \modulus{\eta_i \conn \psi}^2\ d\mu 
	\lesssim \int \sum_i \modulus{\extd^\cN \psi}^2 \modulus{\psi}^2\ d\mu 
		+ \sum_i \int \modulus{\conn(\eta_i\psi)}^2\ d\mu \\
	\lesssim \norm{\psi}^2 + \sum_i \int \modulus{\SDir_\mh(\eta_i \psi)}^2\ d\mu .
\end{multline*}
But by the definition of $\SDir_\mh$, we have that
$$\modulus{\SDir_\mh (\eta_i \psi)}^2 \lesssim  \modulus{\extd^\cN \eta_i}^2 \modulus{\psi}^2 + \eta_i^2 \modulus{\SDir_\mh \psi}^2.$$
Integrating this estimate and on combining it with 
the previous estimates proves the claim.
The argument for $\mg$ is similar.
\end{proof}

\begin{remark}
Typically, the estimate $\norm{\SDir_\mh \psi}^2 + \norm{\psi}^2 \simeq
\norm{\conn \psi}^2 + \norm{\psi}^2$ is obtained
via the Bochner-Lichnerowicz-Schr\"odinger-Weitzenb\"ock identity:
$$ \SDir_\mh^2 \psi = -\tr \conn^2 \psi + \frac{1}{4} \Rs^\mh \psi,$$
where $\Rs^\mh$ is the scalar curvature of $\mh$. This would
force $\mh$ to be at least $\Ck{1,1}$ and we would need
to assume that $\Rs^\mh \geq \gamma$ almost-everywhere for some
$\gamma \in \R$. However, the fact that $\mh$ is $C$-close
to the smooth metric $\mg$ with stronger curvature bounds
allow us to work in the setting where $\mh$ is only $\Ck{0,1}$.
\end{remark} 

\subsection{Pullback of Lebesgue and Sobolev spaces of spinors}
\label{Sec:Pullb}

In this section, we demonstrate that
the unitary map $\spin{\U}$ as defined 
before Theorem \ref{Thm:MainApp} induces maps between $\Lp{p}$ spaces and Sobolev spaces.

For the remainder of this section, 
let us write 
\begin{equation}
\label{Eq:B}
\B = (P_\mg P)^{-\frac{1}{2}},
\quad\text{and}\quad \uptheta = \det \B
\end{equation}
so that $\mg(\B^{-1}u, \B^{-1}v) = \mgt(u,v)$ and 
$d\mu_\mg = \uptheta d\mu_{\mgt}$.

\begin{proposition}
The isometry $\U:(\tanb\cM,\mg) \to (\tanb\cN,\mh)$
is of class $\Ck{0,1}$ and
the induced $\spin{\U}: \Spinors\cM \to \Spinors\cM$
itself induces a bounded invertible map 
$\spin{\U}: \Lp{p}(\Spinors\cM) \to \Lp{p}(\Spinors\cN)$
for all $p \in [1,\infty]$ satisfying 
$$\norm{\spin{\U} u}_{\Lp{p}(\Spinors\cN)} \simeq \norm{u}_{\Lp{p}(\Spinors\cM)}.$$
\end{proposition}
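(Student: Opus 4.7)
The plan is to prove the two assertions in sequence, exploiting the factorisation $\U = P\B$ with $P = \pushf{\zeta}$ and $\B = (\adj{P}_\mg P)^{-1/2}$ that is recorded in the paragraph preceding Theorem \ref{Thm:MainApp} and in \eqref{Eq:B}.

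For the regularity claim, I would track $\Ck{0,1}$ regularity through the construction. Since $\zeta$ is a $\Ck{1,1}$-diffeomorphism, the differential $P = \pushf{\zeta}$ is $\Ck{0,1}$; combined with $\mg$ smooth and $\mh \in \Ck{0,1}$, this gives that the endomorphism field $\adj{P}_\mg P$ has $\Ck{0,1}$ matrix entries in any smooth local frame. The $C$-closeness of $\mg$ and $\mh$ forces its eigenvalues into the compact interval $[C^{-2}, C^2] \subset (0,\infty)$, so the scalar map $t \mapsto t^{-1/2}$ is smooth on a neighbourhood of the joint spectrum; applying it via the functional calculus of symmetric positive endomorphisms preserves $\Ck{0,1}$ regularity. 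Hence $\B$, and therefore $\U = P\B$, are $\Ck{0,1}$. The Spin lift $\spin{\U}$ then inherits $\Ck{0,1}$ regularity since the Spin covering is a local smooth diffeomorphism.

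For the $\Lp{p}$ equivalence with $p \in [1,\infty)$, I would combine the fibrewise unitarity $\modulus{\spin{\U}(x)\psi} = \modulus{\psi}$ of $\spin{\U}$ with the change-of-variable formula associated to the $\Ck{1,1}$-diffeomorphism $\zeta$, noting that $\zeta:(\cM,\mgt) \to (\cN,\mh)$ is a Riemannian isometry by construction of $\mgt$, hence $\pullb{\zeta}\mu_\mh = \mu_{\mgt}$. This yields
\begin{align*}
\norm{\spin{\U} u}^p_{\Lp{p}(\Spinors\cN)}
&= \int_\cN \modulus{(\spin{\U}u)(y)}^p\, d\mu_\mh(y) \\
&= \int_\cM \modulus{\spin{\U}(x)\, u(x)}^p\, d\mu_{\mgt}(x) \\
&= \int_\cM \modulus{u(x)}^p\, d\mu_{\mgt}(x) \\
&= \int_\cM \modulus{u(x)}^p\, \uptheta^{-1}(x)\, d\mu_\mg(x),
\end{align*}
where the last line uses $d\mu_\mg = \uptheta\, d\mu_{\mgt}$ from \eqref{Eq:B}. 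The $C$-closeness gives $\opnorm{\B^{\pm 1}} \leq C$ pointwise, so the eigenvalues of $\B$ lie in $[C^{-1}, C]$ and $\uptheta^{-1}(x) \in [C^{-n}, C^n]$ almost-everywhere. Taking $p$-th roots delivers the claimed equivalence with implicit constants depending only on $C$ and $n$. The case $p = \infty$ follows directly from the fibrewise unitarity, noting that $\mu_\mg$ and $\pullb{\zeta}\mu_\mh$ share the same nullsets since $\uptheta$ is bounded above and below.

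Insofar as there is a main obstacle, it is verifying the Lipschitz change-of-variable step rigorously at the regularity of a $\Ck{1,1}$ diffeomorphism and checking the identification $\pullb{\zeta}\mu_\mh = \mu_{\mgt}$; everything else reduces to pointwise linear algebra combined with the $C$-closeness hypothesis, which uniformly controls the Jacobian factor $\uptheta$.
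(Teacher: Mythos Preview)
Your proposal is correct. The paper itself states this proposition without supplying a proof, treating both claims as routine consequences of the construction: the $\Ck{0,1}$ regularity is asserted just before Theorem~\ref{Thm:MainApp} with the remark ``by examining the local expression,'' and the $\Lp{p}$ equivalence is left implicit. Your argument fills in precisely the expected details---tracking regularity through $P$, $\adj{P}_\mg P$, and the functional calculus for $\B$, then combining fibrewise unitarity of $\spin{\U}$ with the Jacobian bound $\uptheta^{\pm 1} \in [C^{-n},C^n]$ coming from $C$-closeness---so there is nothing to correct or contrast.
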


In what is to follow, let us fix some notation.
As noted in the proof of Theorem \ref{Thm:MainApp}, 
the assumptions we make yields:
uniform constants  
$r_H, C > 0$ such that at each $x \in \cM$,
the ball $B(x,r_H)$ is contractible
and inside $B(x,r_H)$, we have  orthonormal frames $\set{e_i}$ for $\tanb\cM$ 
and $\set{\spin{e}_\alpha}$ for $\Spinors\cM$
so that 
\begin{equation}
\label{Eq:FB}
\norm{e_i}_{\Ck{2}(B(x,r_H))} \leq C
\quad\text{and}\quad\norm{\spin{e}_\alpha}_{\Ck{2}(B(x,r_H))} \leq C.
\end{equation} 

Let the induced orthonormal frame for
$\tanb\cN$ and $\Spinors\cN$ inside 
$\zeta(B(x,r_H))$ respectively
\begin{equation}
\label{Eq:IndFrame} 
\set{\tilde{e}_i = \U e_i}
\quad\text{and}\quad
\set{\spin{\tilde{e}}_\alpha = \spin{\U} \spin{e}_\alpha}.
\end{equation}
Throughout, by $\Omega$ we mean such a ball $B(x,r_H)$.

\begin{lemma}
\label{Lem:ConExp}
\label{Lem:LdExp}
We have
$\conform^a_b(e_j) = \mg(\conn[e_j]e_b, e_a)$
and 
$$
2\mg(\conn[e_j]e_b, e_a) = \mg(\Ld{e_a, e_b}, e_j)
	+ \mg(\Ld{e_j, e_a}, e_b) - \mg(\Ld{e_b, e_j}, e_a)$$
almost-everywhere inside $\Omega$.
Similarly conclusion holds for $\tilde{\conform}^a_b(\tilde{e}_i)$
with respect to the metric $\mh$.
Moreover: 
$\mh(\Ld{\U u, \U v}, \U w) = \mg(\Ld{\B u, \B v}, \B^{-1} w).$ 
\end{lemma}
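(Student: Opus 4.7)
The plan is to split the lemma into two essentially independent computations: the orthonormal-frame Koszul-type identity for $\conn^\mg$ on $(\cM,\mg)$ (which gives both the $\conform^a_b(e_j)$ formula and its expansion, with the identical argument giving the claim for $\tilde\conform^a_b(\tilde e_i)$ on $(\cN,\mh)$), and then the pullback identity for Lie brackets under $\spin{\U}$, which is a bookkeeping computation using $\U = P\B$.

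For the first identity, I would start from the defining relation $\conn^\mg_{e_j} e_b = \conform^a_b(e_j) e_a$, which is read off from the formula \eqref{Eq:Wcoeff} together with the convention $\conn^\mg v_j = \conform^a_j\tensor v_a$ immediately above it. Pairing with $e_a$ and invoking $\mg(e_c,e_a)=\delta_{ca}$ gives $\conform^a_b(e_j) = \mg(\conn^\mg_{e_j}e_b,e_a)$ almost-everywhere. For the second equality I would invoke the Koszul formula for the Levi-Civita connection, valid almost-everywhere since $\mg$ is only $\Ck{0,1}$: on substituting $X=e_j$, $Y=e_b$, $Z=e_a$, the three terms of the form $e_i\mg(e_k,e_\ell)$ vanish because the pairings are constants $\delta_{k\ell}$, and what remains are precisely the three Lie-bracket terms. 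A small amount of sign-chasing using skew-symmetry of $\Ld{\cdot,\cdot}$ then recasts them in the stated form. The same proof, with $\tilde e_i$ in place of $e_i$ and $\mh$ in place of $\mg$, establishes the analogue on $(\cN,\mh)$.

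For the pullback identity, I would use that $\U = P\B$ with $P = \pushf{\zeta}$ and $\B = (\adj P_\mg P)^{-1/2}$. Since $P$ is the pushforward under a diffeomorphism it intertwines Lie brackets of vector fields, so
\[
\Ld{\U u,\U v} = \Ld{P\B u, P\B v} = P\,\Ld{\B u,\B v}.
\]
By the very definition of the pullback metric, $\mh(PX,PY) = \mgt(X,Y)$, so
\[
\mh(\Ld{\U u,\U v},\U w) = \mh(P\Ld{\B u,\B v},P\B w) = \mgt(\Ld{\B u,\B v},\B w).
\]
Since $\mgt(X,Y)=\mg(\adj P_\mg P\,X,Y)=\mg(\B^{-2}X,Y)$ and $\B$ is $\mg$-self-adjoint (being the positive square root of the $\mg$-self-adjoint positive operator $\adj P_\mg P$), we may move one factor of $\B^{-1}$ across the inner product to obtain $\mgt(\Ld{\B u,\B v},\B w)=\mg(\Ld{\B u,\B v},\B^{-1}w)$, which is the stated identity.

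No step is really hard; the only subtlety is the regularity bookkeeping. The Koszul formula, the intertwining $P\Ld{X,Y}=\Ld{PX,PY}$, and self-adjointness of $\B$ are all classical identities for smooth objects, but here $\mg$ and $\mh$ (hence $\B$ and $\U$) are only $\Ck{0,1}$ and $\zeta$ is $\Ck{1,1}$. One verifies these identities where all derivatives exist and then notes that both sides are measurable and defined almost-everywhere, so the "almost-everywhere" qualification in the statement absorbs the regularity loss. This is the same tacit convention used throughout \S\ref{Sec:Dirac}.
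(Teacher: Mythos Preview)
Your proof is correct and follows essentially the same route as the paper's own argument: both obtain $\conform^a_b(e_j)=\mg(\conn_{e_j}e_b,e_a)$ from the definition, cite (or in your case spell out) the Koszul formula for an orthonormal frame, and derive the pullback identity via $\U=P\B$, naturality of the Lie bracket under pushforward, and $\mg$-self-adjointness of $\B$. Your additional remark about the $\Ck{0,1}$/$\Ck{1,1}$ regularity bookkeeping is a welcome elaboration the paper leaves implicit.
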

\begin{proof}
We note that $\conform^a_b(e_j) = w^a_{jb} = e^a(\conn[e_j]e_b) = \mg(\conn[e_j]e_b, e_a)$
\Rd by \eqref{Eq:Wcoeff}. \Bk
The expression for $2\mg(\conn[e_j]e_b, e_a)$ is well known. \Rd Since
$P = \pushf{\zeta}$, we have $\Ld{Pu, Pv} = P\Ld{u,v}$ and on recalling \eqref{Eq:B},
we obtain  
\begin{multline*}
\mh(\Ld{\U u, \U v}, \U w) 
	= \mh( \Ld{P \B u, P \B v}, P \B w)
	= \mh( P \Ld{\B u, \B v}, P \B w) \\
	= \mgt( \Ld{\B u, \B v}, \B w) 
	= \mg( \B^{-1} \Ld{\B u, \B v}, w)
	= \mg( \Ld{\B u, \B v}, \B^{-1} w).
\qedhere
\end{multline*}
\end{proof}

The following lemma allow us to relate derivatives 
of the metric $\mgt = \pullb{\zeta} \mh$ to the
coefficients of the tensorfield $\B$. 
We note that this lemma can also be obtained via a functional calculus
argument.   Inside $\Omega$, we write $\B = (\beta^i_j)$
and $\B^{-1} = (\bar{\beta}^i_j)$. 
 
\begin{lemma}
\label{Lem:DerBnd}
Then, there is a constant $C_2 > 0$
independent of $\Omega$ such that
such that  $\modulus{\partial_{e_t} \beta^{i}_j} \leq C_2 $
and $\modulus{\partial_{e_t} \bar{\beta}^i_j} \leq C_2$
\end{lemma}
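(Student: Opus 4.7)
The plan is to exploit the identification of $\B^{-2}$ with the pullback metric $\mgt = \pullb{\zeta}\mh$, so that the hypothesis $\modulus{\conn^\mg \mgt} \leq C_\mh$ translates directly into control of $\conn^\mg(\B^{-2})$, and then to use a functional-calculus representation of the square root to pass from $\partial(\B^{-2})$ to $\partial \B$ and $\partial \B^{-1}$.

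First, by definition $\mgt(u,v) = \mg(\B^{-2}u,v)$, so the components of $G := \B^{-2}$ in the orthonormal frame $\{e_i\}$ are precisely $G^i_j = \mgt(e_i,e_j)$. Combining hypothesis \ref{Hyp:MainAppLast} with the $\Ck{2}$-control $\|e_i\|_{\Ck{2}(\Omega)} \leq C$ from \eqref{Eq:FB} yields a uniform pointwise bound $\modulus{\partial_{e_t} G^i_j} \lesssim 1$ independent of $\Omega$. Moreover, $\met_M(\mg,\pullb{\zeta}\mh) \leq 1$ gives $C_L \leq e$, so the $C$-closeness of $\mg$ and $\mh$ forces the positive self-adjoint operator $G(x)$ to have spectrum trapped in a fixed compact subinterval $[c_0,c_0^{-1}] \subset (0,\infty)$, uniformly in $x \in \Omega$.

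Next, I would invoke the Dunford-type representation
\[
G^{-1/2} = \frac{1}{\pi}\int_0^\infty t^{-1/2}(t\iden + G)^{-1}\,dt,
\]
valid on the positive cone of self-adjoint operators via the spectral theorem. Differentiating in a direction $e_t$ and applying the resolvent identity produces
\[
\partial_{e_t}\B = -\frac{1}{\pi}\int_0^\infty t^{-1/2}(t\iden + G)^{-1}(\partial_{e_t} G)(t\iden + G)^{-1}\,dt.
\]
The uniform lower bound on the spectrum gives $\opnorm{(t\iden + G(x))^{-1}} \leq (t + c_0)^{-1}$, so the integrand is absolutely integrable with total operator norm bounded by $\opnorm{\partial_{e_t} G}\int_0^\infty t^{-1/2}(t+c_0)^{-2}\,dt \lesssim 1$. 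The bound on $\partial_{e_t} \B^{-1}$ follows analogously, either from a parallel integral representation of $G^{1/2}$ or by differentiating $\B^{-1}\B^{-1} = G$ and solving the Sylvester equation $X\B^{-1} + \B^{-1}X = \partial_{e_t} G$, which is uniquely solvable with controlled norm because $\B^{-1}$ has positive spectrum bounded away from zero. Converting operator-norm bounds on $\partial_{e_t} \B$ and $\partial_{e_t} \B^{-1}$ back to pointwise bounds on the scalar components $\partial_{e_t}\beta^i_j$ and $\partial_{e_t}\bar{\beta}^i_j$ uses once more the $\Ck{2}$-control of the frame, yielding a uniform constant $C_2$ independent of $\Omega$.

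The main obstacle is non-commutativity: $G(x)$ and $\partial_{e_t} G(x)$ need not commute, so there is no naive pointwise differentiation rule for $G^{-1/2}$ as there is for scalars. The virtue of the resolvent-integral approach is that it decouples differentiation from the square root via the scalar parameter $t$, reducing the problem to differentiation of products of operator-valued factors, a step that is blind to non-commutativity.
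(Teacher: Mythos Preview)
Your proposal is correct and is precisely the functional calculus alternative that the paper alludes to in the sentence immediately preceding the lemma. The paper itself takes a different, more elementary route: after reducing (as you do) to bounding $\partial_{e_t}\B$ in terms of $\partial_{e_t}(\B^{\pm 2})$, it exploits the symmetry of both $\B$ and $\partial_{e_t}\B$ to compute, for unit vectors $u$,
\[
\mg\big((\partial_{e_t}\B^2)u,u\big) = \mg\big(\B(\partial_{e_t}\B)u,u\big) + \mg\big((\partial_{e_t}\B)\B u,u\big) = 2\,\mg\big(\B(\partial_{e_t}\B)u,u\big),
\]
so that the numerical radii satisfy $\nrad(\partial_{e_t}\B^2) = 2\,\nrad(\B(\partial_{e_t}\B))$. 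Equivalence of numerical radius and operator norm in finite dimensions, together with the lower bound $\modulus{\B u} \geq C^{-1}\modulus{u}$, then gives $\modulus{\partial_{e_t}\B} \lesssim \modulus{\partial_{e_t}\B^2}$. The bound on $\partial_{e_t}\B^{-1}$ follows by differentiating $\B\B^{-1}=\iden$, exactly as you note. The paper's argument is shorter and entirely algebraic, but leans on the finite-dimensionality of the fibre for the norm equivalence; your resolvent-integral approach is dimension-free and makes transparent why non-commutativity causes no trouble.
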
 
\begin{proof}
First note that we have 
$\modulus{\partial_{e_t} \mgt(e_i, e_j)} \lesssim 1$
inside $\Omega$, since in this frame, 

$$\conn^\mg (\pullb{\zeta} \mh) = (\partial_{e_t} \mgt_{ij}) e^t \tensor e^i \tensor e^j + 
		\mgt_{ij} e^t \tensor \conn[e_t] (e^i \tensor e^j),$$ 
and by assumption \Rd \ref{Hyp:MainAppLast} of Theorem \ref{Thm:MainApp},\Bk we have that $\modulus{\conn^\mg (\pullb{\zeta} \mh)} \lesssim 1$
as well as $\modulus{\conn[e_t] (e^i \tensor e^j)} \lesssim 1$. 
Now,
$ e_t \bar{\beta}^r_s = - (e_t \beta^q_p) \bar{\beta}^r_q \bar{\beta}^p_s$
and so it suffices to simply bound $\modulus{e_t \beta^{i}_j} \lesssim 1$.
We first note that 
$$ e_t \mgt_{rs} = e_t \mg(\B e_r, \B e_s) = e_t \mg (\B^2 e_r, e_s) = e_t (\B^2)_{rs},$$
where $\B^2 = ((\B^2)_{rs})$ as a matrix.
Thus, we obtain $\modulus{\e_t \beta^r_s} \lesssim 1$
if we are able to prove $\modulus{\e_t \B}\lesssim \modulus{e_t \B^2}$.
Now, by the product rule, note that we obtain 
$e_t \B^2 = \B (e_t \B) + (e_t \B) \B$, and that, 
for a vector $u \in \tanb_x \cM$ with $\modulus{u} = 1$,
\begin{multline*}
\mg( e_t \B^2 u, u) 
	= \mg ( (\B e_t \B)u, u ) + \mg( (e_t \B) \B u, u) \\
	= \mg ( e_t \B u, \B u ) + \mg( \B u, e_t \B) 
	= 2 \mg( \B (e_t \B) u, u)
\end{multline*}
since $\B$ is real-symmetric, as is $e_t \B$.
This proves that the numerical radius
$\nrad(e_t \B^2) = 2\nrad(\B (e_t \B)$.
Moreover, note that $\nrad(\mdot)$ is a norm,
and since any two norms on a finite dimensional 
vector space are equivalent, and by the $C$-closeness
of $\mg$ and $\mgt$ we have that $\modulus{\B u} \geq C^{-1} \modulus{u}$,
$
\modulus{e_t \B^2}
	\simeq \nrad(e_t \B^2)
	= 2 \nrad(\B (e_t \B))
	\simeq \modulus{\B (e_t \B)}
	\geq C^{-1} \modulus{ e_t \B}.$
\end{proof}

With the aid of these lemmas, we obtain the following
boundedness of $\spin{\U}$ between Sobolev spaces. 

\begin{proposition}
\label{Prop:SobMap}
The space $\spin{\U} \Sob{1,2}(\Spinors\cM) = \Sob{1,2}(\Spinors\cN)$
with
$ \norm{\spin{\U} \psi} + \norm{\Sconn^\mh (\spin{\U} \psi)}
	\simeq \norm{ \psi} + \norm{\Sconn^\mg \psi}.$
In fact, the pointwise estimate
$ \modulus{\spin{\U} \psi} + \modulus{\Sconn^\mh (\spin{\U} \psi)}
	\simeq \modulus{ \psi} + \modulus{\Sconn^\mg \psi}$
holds almost-everywhere.
\end{proposition}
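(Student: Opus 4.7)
My plan is to prove the pointwise estimate in each frame ball $\Omega = B(x,r_H)$ and then integrate, since the implicit constants can be chosen uniformly in $x$ by \eqref{Eq:FB} and the $C$-closeness hypothesis.

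First I would dispose of the zeroth-order part: since $\spin{\U}(y):\Spinors_y\cM\to\Spinors_{\zeta(y)}\cN$ is fibrewise unitary and $d\mu_\mg \simeq d\mu_\mh\circ\zeta$ by the $C$-closeness of the metrics, we immediately get $\modulus{\spin{\U}\psi}_\mh = \modulus{\psi}_\mg$ pointwise (and so the $\Lp{2}$ norms are equivalent). It remains to treat the derivative term.

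For the gradient piece, I would fix $\Omega$, expand $\psi = \psi^\alpha \spin{e}_\alpha$, and observe that by definition of $\spin{\U}$ via the chosen lifts,
\[
\spin{\U}\psi = (\psi^\alpha \comp \zeta^{-1})\, \spin{\tilde e}_\alpha \quad \text{on } \zeta(\Omega),
\]
with $\spin{\tilde e}_\alpha$ as in \eqref{Eq:IndFrame}. Applying the product rule and the spinor connection formula \eqref{Eq:SConn} to both sides yields
\[
\Sconn^\mh(\spin{\U}\psi) = \tilde e^i \tensor \bigl(\partial_{\tilde e_i}(\psi^\alpha\comp\zeta^{-1})\bigr)\spin{\tilde e}_\alpha
 + (\psi^\alpha\comp\zeta^{-1}) \tilde\conform^2_{\tilde E} \rep \spin{\tilde e}_\alpha,
\]
and similarly for $\Sconn^\mg\psi$ using $\conform^2_E$. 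The pointwise chain rule with $(\zeta^{-1})_\ast\tilde e_i = \B e_i$ gives $\partial_{\tilde e_i}(\psi^\alpha\comp\zeta^{-1})\comp\zeta = \partial_{\B e_i}\psi^\alpha$, and since $\B$ and $\B^{-1}$ are bounded (by $C$-closeness) this term is pointwise comparable to $\partial_{e_j}\psi^\alpha = (d\psi^\alpha)(e_j)$ up to a constant.

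The main obstacle is controlling the difference of the connection one-forms $\tilde\conform^a_b$ and $\conform^a_b$. Here I would use Lemma \ref{Lem:ConExp}, expressing both via Lie brackets of the respective orthonormal frames, together with the identity $\mh(\Ld{\U u,\U v},\U w) = \mg(\Ld{\B u,\B v},\B^{-1}w)$ from Lemma \ref{Lem:LdExp}. Expanding $\Ld{\B e_a,\B e_b}$ by the product rule introduces terms involving $e_j\beta^i_k$, which are bounded uniformly in $\Omega$ by Lemma \ref{Lem:DerBnd}, and terms involving $\Ld{e_a,e_b}$ which are controlled by \eqref{Eq:FB}. This yields $\modulus{\tilde\conform^a_b(\tilde e_j)}\lesssim 1$ almost-everywhere in $\Omega$, with the constant depending only on $C_H$, $C$-closeness, and $C_2$.

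Combining these bounds fibrewise gives
\[
\modulus{\Sconn^\mh(\spin{\U}\psi)}_\mh \lesssim \modulus{d\psi^\alpha}\,\modulus{\spin{\tilde e}_\alpha}_\mh + \modulus{\psi^\alpha}\bigl(1 + \modulus{\tilde\conform^2_{\tilde E}}\bigr) \lesssim \modulus{\Sconn^\mg\psi}_\mg + \modulus{\psi}_\mg,
\]
and the reverse inequality follows by running the identical argument with $\spin{\U}^{-1}$ in place of $\spin{\U}$, $\B^{-1}$ in place of $\B$ (also bounded by $C$-closeness). Since all constants are uniform in $\Omega$, integrating gives the global norm equivalence, and thus $\spin{\U}\Sob{1,2}(\Spinors\cM) = \Sob{1,2}(\Spinors\cN)$ by Proposition \ref{Prop:NormCmp}.
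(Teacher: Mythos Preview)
Your proposal is correct and follows essentially the same route as the paper: a local computation in each GBG ball using the chain rule $\partial_{\tilde e_i}(\psi^\alpha\comp\zeta^{-1})\comp\zeta = \partial_{\B e_i}\psi^\alpha$, the expression of $\tilde\conform^a_b(\tilde e_j)$ via Lie brackets of $\B e_i$ from Lemma~\ref{Lem:LdExp}, and the derivative bounds on $\beta^i_j$ from Lemma~\ref{Lem:DerBnd}, together with the uniform frame control \eqref{Eq:FB}. The only cosmetic difference is that the paper pulls everything back to $\Spinors\cM$ via $\spin{\U}^{-1}$ before estimating (computing $\modulus{\spin{\U}^{-1}\Sconn^\mh_{\tilde e_j}(\spin{\U}\psi)}$), whereas you estimate directly on $\cN$; since $\spin{\U}$ is fibrewise unitary these are equivalent.
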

\begin{proof}
Note that the \Rd assumptions \ref{Hyp:MainAppFirst}-\ref{Hyp:MainAppLast} 
in Theorem \ref{Thm:MainApp}  imply an open covering 
$\set{\Omega_p = B(p,r_H)}$ \Bk of $\cM$ satisfying
$\modulus{\conn^\mg e_{p,i}} \leq C$ and
$\modulus{\partial_{e_{p,k}} \mgt (e_{p,i}, e_{p,j})} \lesssim C$,
where $\set{e_{p,i}}$ is the frame inside $\Omega_p$.
So, fix $p$ and 
let $\psi \in \Sect(\Spinors\cM)$ be differentiable at $x \in \Omega_p$
and note that at $x$,
$$\modulus{\Sconn^\mh (\spin{\U} \psi)}^2
	= \sum_{j} \modulus{\Sconn_{\tilde{e}_j}^\mh (\spin{\U} \psi)}^2
	= \sum_{j} \modulus{\spin{\U}^{-1} \Sconn_{\tilde{e}_j}^\mh (\spin{\U} \psi)}^2.$$
Now, note that
$$\Sconn_{\tilde{e}_j}^\mh(\spin{\U}\psi) 
	= \partial_{\tilde{e}_j} (\psi^\alpha \comp \zeta^{-1}) \spin{\tilde{e}}_\alpha 
		+ (\psi^\alpha \comp \zeta^{-1}) \Sconn_{\tilde{e}_j}^\mh \spin{\tilde{e}}_\alpha,$$
and that by the chain rule, 
on noting that 
$\Sconn_{\tilde{e}_j}^\mh \spin{\tilde{e}}_\alpha = 
	\frac{1}{2} \sum_{b < a} \conform^2_F(\tilde{e}_j) \tilde{e}_b\rep \tilde{e}_a \rep \spin{\tilde{e}}_\alpha$
\Rd from \eqref{Eq:SConn} and \eqref{Eq:Conn2Form},\Bk we obtain that 
$$\spin{\U}^{-1} \Sconn_{\tilde{e}_j}^\mh(\spin{\U} \psi)
	= \partial_{B e_j} (\psi^\alpha) \spin{e}_\alpha
	+ \psi^\alpha (\conform^b_a(\tilde{e}_j) \comp \zeta) e_b\rep e_a \rep \spin{e}_\alpha.$$
We estimate each term on the right side of the equation.

First, note that by Lemma \ref{Lem:ConExp},  
$$\conform^b_a(\tilde{e}_j) = \frac{1}{2} \cbrac{ \mg( \Ld{\B e_a, \B e_b}, \B^{-1} e_j)
		+ \mg( \Ld{\B e_j, \B e_a}, \B^{-1} e_b)
		- \mg( \Ld{\B e_b, \B e_j}, \B^{-1} e_a)},$$
and by metric compatibility between $\mg$ and $\conn^\mg$,
we have that 
$$\mg(\Ld{ \B e_r, \B e_s}, \B^{-1} e_t) = 
	\mg(\conn^\mg_{\B e_r }(\B e_s ), \B^{-1} e_t) - \mg(\conn^\mg_{\B e_s}(\B e_r), \B^{-1} e_t).$$
We compute 
$$\conn^\mg_{\B e_r} (\B e_s)  
	= \B_r^j \conn^\mg_{e_j}( \B^k_s e_k)
	= \B_r^j \cbrac{(e_j \B^k_s) e_k + \B^k_s \conn^\mg_{e_j} e_k}.$$
On combining these calculations using Lemma \ref{Lem:ConExp}, we obtain that
$$
\sum_j \modulus{\psi^\alpha (\conform^b_a(\tilde{e}_j) \comp \zeta) e_b\rep e_a \rep \spin{e}_\alpha}^2
	\lesssim \modulus{\psi}^2.$$

To estimate the remaining term, we note that
$$
(\partial_{\B e_j} \psi^\alpha) \spin{e}_\alpha 
	= \B^k_j (\partial_{e_k}\psi^\alpha) \spin{e}_\alpha
	= \B^k_j \Sconn_{e_k}^\mg \psi - \B^k_j \psi^\alpha \Sconn_{e_k}^\mg \spin{e}_\alpha.$$
But by Lemma \ref{Lem:ConExp}
$$\modulus{\Sconn^\mg_{e_k} \spin{e}_\alpha} 
	\leq \frac{1}{2} \sum_{b < a} \modulus{\conform^b_a(e_k) e_b \rep e_a \rep \spin{e}_\alpha}
	\lesssim \sum_{b < a} \modulus{\conn^\mg_{e_k}e_a} \modulus{e_b}
	\lesssim 1.$$
Therefore,
$$ \sum_{j}  \modulus{(\partial_{\B e_j} \psi^\alpha) \spin{e}_\alpha} 
	\lesssim \modulus{\Sconn^\mg \psi} + \modulus{\psi}.$$
This proves the pointwise estimate, and interchanging the
roles of $\cM$ and $\cN$ proves the reverse estimate.
\end{proof}

\subsection{The pullback Dirac operator and the structural condition}
\label{Sec:Struc}

In this section, we pullback the Dirac operator $\SDir_\mh$ to 
on $\Spinors\cN$ to an operator $\SDirp$
on $\Spinors\cM$, and prove
\eqref{Def:Struc}.

Fix  an $\Omega = B(x,r_H)$ and 
\Rd let $\psi \in \Sect(\Spinors\cM)$. 
For $y \in \Omega$ for which $\Sconn \psi(y)$ 
exists, define 
\begin{equation}
\label{Eq:SDirs}
\SDir \psi(y) = \SDir_\mg \psi(y)\quad \text{and}\quad 
\SDirp \psi(y) 
	= \spin{\U}^{-1}(y) \SDir_{\mh} (\spin{\U} \psi)(y).
\end{equation}
Recall the map $\B$ from \eqref{Eq:B} and since $\B \in \Sect(\Tensors[1,1]\cM)$,
in an orthonormal frame $\set{e_i}$, we have that
$\B e_i = \beta^j_i e_j$ and $\B e^j = \beta^j_i e^i$. \Bk
Moreover, we note that since $\met_{M}(\mg, \pullb{\zeta}\mh) \leq 1$, 
$\modulus{\delta^j_i - \beta^j_i}\leq \norm{\iden - \B}_\infty \leq \met_M(\mg,\mh)$

First, we examine the structure of the difference 
$\SDirp - \SDir$ locally in a frame, the main
point being the use of the derivation property
in Proposition \ref{Prop:Coeff1}, 
before establishing the global result in Proposition 
\ref{Prop:StrucL2}.
 
Recall from  \eqref{Eq:IndFrame} that $\tilde{e}_i = \U e_i$
and $\spin{\tilde{e}}_\alpha = \spin{\U} \spin{e}_\alpha$.
Note that this is the fibre-wise $\spin{\U}$ 
and not the $\spin{\U}$ in $\Lp{2}$.
We also denote the induced fibrewise Clifford bundle
pullback between $\Cliff\cM$ and $\Cliff\cN$ by $\U$.

\begin{proposition}
\label{Prop:GlobDiff}
We have 
$$(\SDir - \SDirp)\psi  = Z \Sconn \psi 
	- ((\iden - \B)e^i) \rep \conform^2_E(e_i)\rep \psi
	+ e^i \rep (\conform^2_E(e_i) - \U^{-1}\conform^2_F(\tilde{e}_i))\rep \psi,$$ 
distributionally for $\psi \in \Sob{1,2}(\Spinors\cM)$,
where $Z \in \Lp{\infty}(\cotanb\cM \tensor \Spinors\cM, \Spinors\cM)$
with norm $\norm{Z}_\infty \lesssim \norm{\iden - \B}_\infty$. 
\end{proposition}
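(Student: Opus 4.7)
The plan is to derive the identity pointwise for smooth $\psi$ inside a fixed frame, then extend to $\Sob{1,2}(\Spinors\cM)$ distributionally by density (using the domain characterisation of Proposition \ref{Prop:NormCmp}). Fix an $\Omega = B(x,r_H)$ of the type constructed in \S\ref{Sec:DiracSob} with orthonormal frame $\set{e_i}$ for $\tanb\cM$, dual coframe $\set{e^i}$, and induced spinor frame $\set{\spin{e}_\alpha}$; on $\zeta(\Omega)$, take the transported frames $\tilde{e}_i = \U e_i$ and $\spin{\tilde{e}}_\alpha = \spin{\U}\spin{e}_\alpha$, with connection $2$-forms $\conform^2_E$ and $\conform^2_F$ in the sense of \eqref{Eq:Conn2Form}. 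Write $\B e_i = \beta^j_i e_j$, and recall that $\B$ is $\mg$-symmetric, so $\beta^j_i = \beta^i_j$.

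Using \eqref{Eq:ASDirac}, \eqref{Eq:SConn} and the derivation property of $\Sconn^\mg$, I expand
\begin{equation*}
\SDir\psi = e^i \rep (\partial_{e_i}\psi^\alpha)\spin{e}_\alpha + e^i \rep \conform^2_E(e_i) \rep \psi.
\end{equation*}
The analogous expansion in the $\tilde{e}$-frame gives an expression for $\SDir_\mh(\spin{\U}\psi)$. Two intertwining facts then reduce it to the $e$-frame: first, since $\pushf{\zeta} = P$ and $\tilde{e}_i = P\B e_i$, the chain rule yields $\partial_{\tilde{e}_i}(\psi^\alpha \comp \zeta^{-1})\comp \zeta = \partial_{\B e_i}\psi^\alpha$; second, by construction of $\spin{\U}$ from the isometric factor of $P$, one has the Clifford-module intertwining $\spin{\U}^{-1}(\omega \rep \spin{\tilde{e}}_\alpha) = (\U^{-1}\omega) \rep \spin{e}_\alpha$. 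Combining these,
\begin{equation*}
\SDirp\psi = e^i \rep (\partial_{\B e_i}\psi^\alpha)\spin{e}_\alpha + e^i \rep \U^{-1}\conform^2_F(\tilde{e}_i) \rep \psi.
\end{equation*}
Subtracting produces the final summand of the claimed identity directly, and leaves the derivative difference $e^i \rep (\partial_{(\iden - \B)e_i}\psi^\alpha)\spin{e}_\alpha$ to be repackaged.

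For this last term, the derivation property of $\Sconn^\mg$ gives
\begin{equation*}
(\partial_{(\iden - \B)e_i}\psi^\alpha)\spin{e}_\alpha = \Sconn^\mg_{(\iden - \B)e_i}\psi - \conform^2_E((\iden - \B)e_i) \rep \psi.
\end{equation*}
Defining $Z$ fibrewise in the frame by $Z(e^k \tensor \phi) = (\delta^k_j - \beta^k_j)\, e^j \rep \phi$, the first piece becomes $Z \Sconn^\mg \psi$ with the bound $\norm{Z}_\infty \lesssim \norm{\iden - \B}_\infty$ following from orthonormality of $\set{e^j}$ and boundedness of Clifford multiplication by a unit vector. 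For the second piece, the symmetry $\beta^j_i = \beta^i_j$ allows the index swap $e^i \rep \conform^2_E((\iden - \B)e_i) = ((\iden - \B)e^i) \rep \conform^2_E(e_i)$, giving exactly the middle term of the proposition. The resulting formula is patently frame-independent (since all right-hand terms are built from globally defined objects and $\Lp{\infty}$ multipliers), so the local pointwise identities glue to a global identity for smooth $\psi$; density of $\Ck[c]{\infty}(\Spinors\cM)$ in $\Sob{1,2}(\Spinors\cM)$ together with $\Lp{\infty}$-boundedness of each coefficient extends it distributionally. The main delicate step is keeping the bookkeeping for $\spin{\U}^{-1}$ honest: one must simultaneously use the chain rule through $\pushf{\zeta}$ and the Clifford intertwining property to recognise the derivative-difference in the intrinsic $\Sconn^\mg$-form, and then exploit the symmetry of $\B$ to land on the specific form $((\iden - \B)e^i)\rep \conform^2_E(e_i)\rep \psi$ demanded by the statement.
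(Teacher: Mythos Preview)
Your proof is correct and follows essentially the same route as the paper: expand $\SDir\psi$ and $\SDirp\psi$ in the frame via \eqref{Eq:ASDirac}, use the chain rule through $\pushf{\zeta}$ together with the Clifford intertwining of $\spin{\U}$ to obtain $\SDirp\psi = e^i\rep(\partial_{\B e_i}\psi^\alpha)\spin{e}_\alpha + e^i\rep\U^{-1}\conform^2_F(\tilde{e}_i)\rep\psi$, and then repackage the derivative difference using the derivation property of $\Sconn^\mg$. The only cosmetic difference is the order in which you perform the index swap: the paper first moves $(\iden - \B)$ from the vector slot to the covector slot (writing $(\delta^j_i - \beta^j_i)e^i = (\iden - \B)e^j$) and then applies the derivation property, whereas you apply the derivation property first and then invoke the symmetry of $\B$ to swap; both produce the same $Z$ and the same middle term.
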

\begin{proof}
If $\psi = \psi^\alpha \spin{e}_\alpha$, 
$ \spin{\U}\psi = (\psi^{\alpha} \comp \zeta^{-1}) \spin{\tilde{e}}_\alpha$,
and so 
$$
\SDir_\mh \spin{\U} \psi = \tilde{e}^i \rep \partial_{\tilde{e}_i}(\psi^\alpha \comp \zeta^{-1}) \spin{\tilde{e}}_\alpha
	+ (\psi^\alpha \comp \zeta^{-1}) \tilde{e}^i \rep \Sconn[\tilde{e}_i]\spin{\tilde{e}}_\alpha.$$
Thus, on pulling back this expression to $\Spinors\cM$ via $\spin{\U}^{-1}$,
and invoking the chain rule to the first sum in this expression,
we obtain that
$$ \SDirp \psi = e^i \rep (\partial_{\B e_i} \psi^\alpha) \spin{\tilde{e}}_\alpha
	+ \psi^\alpha e^i \rep \spin{\U}^{-1}\Sconn[\tilde{e}_i]\spin{\tilde{e}}_\alpha.$$
Thus, the difference of these operators are given by the expression
$$ (\SDir - \SDirp) \psi 
	= e^i \rep (\partial_{e_i} \psi^\alpha - \partial_{\B e_i} \psi^\alpha) \spin{e}_\alpha 
		+ \psi^\alpha e^i \rep( \Sconn[e_i] \spin{e}_\alpha - \spin{\U}^{-1} (\Sconn[\tilde{e}_i]\spin{\tilde{e}}_\alpha)).$$
\Rd Recalling \Bk that $\Sconn[e_i] \spin{e}_\alpha = \conform^2_E(e_i) \rep \spin{e}_\alpha$
and that 
$$\spin{\U}^{-1} \Sconn[\tilde{e}_i]\spin{\tilde{e}}_\alpha =
	\spin{\U}^{-1} (\conform^2_F(\tilde{e}_i) \rep \spin{\tilde{e}}_\alpha)
	= \U^{-1} \conform^2_F(\tilde{e}_i) \rep \spin{\U}^{-1}\spin{\tilde{e}}_\alpha
	= (\U^{-1} \conform^2_F(\tilde{e}_i)) \rep \spin{e}_\alpha.$$
The first expression is then given by 
\begin{multline*} 
e^i \rep (\partial_{e_i} \psi^\alpha - \partial_{\B e_i} \psi^\alpha) \spin{e}_\alpha
	= (\delta^j_i - \beta^j_i) e^i \rep (\partial_{e_j} \psi^\alpha) \spin{e}_\alpha \\
	= ((\iden - \B)e^j) \rep (\partial_{e_j} \psi^\alpha) \spin{e}_\alpha
	= ((\iden - \B)e^j) \rep \Sconn[e_j] \psi - \psi^\alpha (\iden - \B)e^j \rep \Sconn[e_i]\spin{e}_\alpha.
\end{multline*}
Let $\omega = w^a \tensor \spin{w}_a \in \Sect(\cotanb\cM \tensor \Spinors\cM)$
and define $Z \omega = (\iden - \B)w^a \rep \spin{w}_a$. 
This defines a frame invariant expression with
$$Z \Sconn \psi = ((\iden - \B)e^j) \rep \Sconn[e_j] \psi,$$
and
$ \modulus{Z \omega} 
	= \modulus{(\iden - \B)w^a \rep \spin{w}_a}
	\leq  \modulus{(\iden - \B)w^a} \modulus{\spin{w}_a}
	\lesssim \modulus{w^a} \modulus{\spin{w}_a} \simeq \modulus{\omega}.$
\end{proof}

As a consequence of this proposition, 
we will continue to examine remaining terms of
the expression 
$(\SDir - \SDirp  - Z \Sconn) \psi$
with the main term being 
$e^i \rep (\conform^2_E(e_j) - \U^{-1}\conform^2_F(\tilde{e}_j))\rep \psi$.
\Rd Letting $\B^{-1} = (\bar{\beta}^{j}_i)$ in the frame $\set{e_i}$, note that \Bk  
\begin{equation}
\label{Eq:MainExp}
\begin{aligned}
(\conform^2_E(e_j) &- \U^{-1}\conform^2_F(\tilde{e}_j)) 
	= \frac{1}{2} \sum_{b < a} (\conform^b_a(e_i) - \tilde{\conform}^b_a(\tilde{e}_i) \comp \zeta^{-1})\ e_b \rep e_a \\
	&\quad= \frac{1}{4} \sum_{b < a} \bigg \{ (\mg(\Ld{e_a, e_b}, e_j)
	+ \mg(\Ld{e_j, e_a}, e_b) - \mg(\Ld{e_b, e_j}, e_a)) \\
	&\quad\qquad- (\mh(\Ld{\tilde{e}_a, \tilde{e}_b}, \tilde{e}_j)
	+ \mh(\Ld{\tilde{e}_j, \tilde{e}_a}, \tilde{e}_b) - \mh(\Ld{\tilde{e}_b, \tilde{e}_j}, \tilde{e}_a)) \bigg \}\ e_b \rep e_a \\ 
	&\quad= \frac{1}{4} \sum_{b < a} \bigg \{ (\mg(\Ld{e_a, e_b}, e_j) 
	+ \mg(\Ld{e_j, e_a}, e_b) - \mg(\Ld{e_b, e_j}, e_a)) \\
	&\quad\qquad- (\mg(\Ld{\B e_a, \B e_b},\B^{-1}  e_j)
	+ \mg(\Ld{\B e_j , \B e_a}, \B^{-1} e_b )  \\
	&\qquad\qquad\qquad\qquad\qquad\qquad\qquad- \mg(\Ld{\B e_b, \B e_j}, \B^{-1} e_a)) \bigg \}\ e_b \rep e_a,
\end{aligned} 
\end{equation}
where the last line follows from 
Lemma \ref{Lem:LdExp}.
Hence, it suffices to consider the differences of the form 
$ \mg(\Ld{u, v}, w) - \mg(\Ld{\B u, \B v}, \B^{-1} w)$. 

\begin{lemma}
We have \begin{align*} 
\mg(\Ld{e_i, e_j}, e_k) - \mg(\Ld{\B e_i, \B e_j}, \B^{-1} e_k) 
	&= (\delta_i^a \delta_j^b \delta_k^c - \beta^a_i \beta_j^b \bar{\beta}_k^c )
		\mg(\Ld{e_a, e_b},e_c) \\
	&\qquad\qquad-\mg(\ (\partial_{\B e_i}(\beta_j^a) - \partial_{\B e_j}(\beta^a_i))e_a , \B^{-1} e_k)
\end{align*}
almost-everywhere in $\Omega$.
\end{lemma}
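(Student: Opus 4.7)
The plan is direct: expand the Lie bracket $\Ld{\B e_i, \B e_j}$ using the Leibniz rule, take the inner product against $\B^{-1} e_k$, and match the two resulting pieces with the terms on the right-hand side of the claim.

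First I would write $\B e_i = \beta^a_i e_a$ and $\B e_j = \beta^b_j e_b$ in the frame $\{e_s\}$ on $\Omega$. Using the standard identity $\Ld{fX, gY} = fg\Ld{X,Y} + f(Xg)Y - g(Yf)X$ applied (via bilinearity) to each pair of summands, together with the definition $\partial_{\B e_i}(\beta^a_j) = \beta^c_i (e_c \beta^a_j)$, I obtain
\begin{equation*}
\Ld{\B e_i, \B e_j} = \beta^a_i \beta^b_j \Ld{e_a, e_b} + \bigl(\partial_{\B e_i}(\beta^a_j) - \partial_{\B e_j}(\beta^a_i)\bigr)\, e_a
\end{equation*}
almost-everywhere in $\Omega$, where the validity of this pointwise identity on a full-measure set follows from the $\Ck{0,1}$-regularity of $\B$ given by Lemma \ref{Lem:DerBnd}.

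Next, I would pair both sides with $\B^{-1} e_k = \bar{\beta}^c_k e_c$ using the metric $\mg$. The first term becomes
$\beta^a_i \beta^b_j \bar{\beta}^c_k\, \mg(\Ld{e_a,e_b}, e_c)$,
while the second term gives precisely
$\mg\bigl((\partial_{\B e_i}(\beta^a_j) - \partial_{\B e_j}(\beta^a_i)) e_a,\, \B^{-1} e_k\bigr)$.
Finally, writing the trivial identity
$\mg(\Ld{e_i, e_j}, e_k) = \delta^a_i \delta^b_j \delta^c_k\, \mg(\Ld{e_a, e_b}, e_c)$
and subtracting yields the stated formula.

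There is no genuine obstacle here; the lemma is essentially a bookkeeping computation encoding the Leibniz rule for the Lie bracket under the linear endomorphism $\B$. The only point that requires mild care is ensuring that the Leibniz-type expansion is valid almost-everywhere given that $\B$ and $\B^{-1}$ are only Lipschitz, but this is immediate from the pointwise derivative bounds in Lemma \ref{Lem:DerBnd} together with Rademacher's theorem on the chart $\Omega$.
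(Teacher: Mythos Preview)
Your proof is correct and follows essentially the same route as the paper: both expand $\Ld{\B e_i, \B e_j}$ via the Leibniz rule for the Lie bracket to obtain $\beta^a_i \beta^b_j \Ld{e_a, e_b} + (\partial_{\B e_i}(\beta^a_j) - \partial_{\B e_j}(\beta^a_i)) e_a$, then pair with $\B^{-1} e_k$ and use the trivial identity $\mg(\Ld{e_i,e_j},e_k)=\delta^a_i\delta^b_j\delta^c_k\,\mg(\Ld{e_a,e_b},e_c)$ to finish.
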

\begin{proof}
Using the derivation property, we obtain that
\begin{align*}
\Ld{\B e_i, \B e_j}f
	&= \partial_{\B e_i}(\beta_j^b) e_b(f) + \beta_j^b \beta_i^a e_a e_b (f)
	 	- 
	 \partial_{\B e_j}(\beta_i^a) e_a(f) - \beta_i^a \beta_j^b e_b e_a (f) \\ 
	&= (\partial_{\B e_i}(\beta_j^a)  - \partial_{\B e_j}(\beta_i^a)) e_a(f)
		+ \beta_i^a \beta_j^b [e_a, e_b]f,
\end{align*}
where the last equality follows from the fact that \Rd $a$ and $b$ 
are dummy indices, i.e., $\beta^b_j e_b = \beta^a_j e_a$. \Bk 
Therefore, 
\begin{multline*}
\mg(\Ld{e_i, e_j}, e_k) - \mg(\Ld{\B e_i, \B e_j}, \B^{-1} e_k)
	= \mg(\Ld{e_i, e_j}, e_k) - \mg(\Ld{\B e_i, \B e_j}, \B^{-1} e_k)  \\ 
	= \mg(\Ld{e_i, e_j}, e_k) - \mg ( \beta_i^a \beta_j^b [e_a, e_b], \bar{\beta}_k^c e_c)
		- \mg(\ (\partial_{\B e_i}(\beta_j^a )- \partial_{\B e_j}(\beta_i^a)) e_a, \B^{-1} e_k).
\end{multline*}
Then, on noting that 
$\mg(\Ld{e_i, e_j}, e_k) = \delta^a_i \delta^b_j \delta^c_k \mg(\Ld{e_a, e_b}, e_c)$,
we obtain the desired conclusion.
\end{proof}

With the aid of this, we re-organise the
expression \eqref{Eq:MainExp} in the following way: 

\begin{equation}
\label{Eq:MainExp2}
\begin{aligned}
(\conform^2_E(e_i) &- \U^{-1}\conform^2_F(\tilde{e}_i)) \\
	&= \frac{1}{4} \sum_{b < a} (\Xi^{qrs}_{abi} + \Xi_{iab}^{qrs} - \Xi^{qrs}_{bia}) 
		\mg(\Ld{e_q,e_r}, e_s)\ e_b \rep \e_a \\ 
	&\qquad\qquad+\frac{1}{4} \sum_{b < a} (\Upsilon_{abi} - \Upsilon_{bai} 
		+ \Upsilon_{iab} - \Upsilon_{aib} + \Upsilon_{iba}
		- \Upsilon_{bia}) \ e_b \rep e_a,
\end{aligned}
\end{equation}
where
$\Xi^{qrs}_{abc} = (\delta^q_a \delta^r_b \delta^s_c - \beta^q_a \beta^r_b \bar{\beta}^s_c)$,
$\Upsilon_{abc} = \partial_{\B e_a}(\beta^p_b)\bar{\beta}^q_c \delta_{pq}$.
We analyse terms of the form $\Upsilon_{rst}\ e_b \rep e_a$ where $(r,s,t)$ are permutations
of $\set{a,b,i}$.

\begin{lemma}
The following holds almost-everywhere in $\Omega$:
$$\Upsilon_{abc} = \tr \conn^\mg (\Lambda_{abc}) 
	- \upepsilon^p_b \partial_{\B e_l} (\bar{\beta}^q_c \theta_{ad}) \bar{\beta}^l_m \delta^{md}
	 + e_d(\Lambda_{abc}) w_{mk}^d \Rd \delta^{mk},$$ \Bk 
where $\tr$ denotes the trace with respect to the metric $\mg$
and where 
$\upepsilon^p_b = \beta^p_b - \delta^p_b$, $\Lambda_{abc} = \upepsilon^p_b \bar{\beta}_c^q \delta_{pq} \theta_{ad}\ e^d$
and  \Rd $\theta_{ad} = \beta^a_d = \delta_{ak} \beta^{k}_d$. \Bk 
\end{lemma}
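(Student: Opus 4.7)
The goal of this lemma is to rewrite the raw expression $\Upsilon_{abc}=\partial_{\B e_a}(\beta^p_b)\bar{\beta}^q_c\delta_{pq}$ as a Riemannian divergence plus two remainders that carry at least one factor of $\upepsilon=\B-\iden$ (which is small in $\Lp{\infty}$ by $\met_M(\mg,\mh)\le 1$) or of the bounded but not small connection coefficients $w^d_{mk}$. This is the decomposition that, a few paragraphs later, will let us identify the $\divv A_2$ part and the $A_3$ part in the structural condition \eqref{Def:Struc} required by Theorem~\ref{Thm:Main}.

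The plan has four steps. First I would replace $\beta^p_b$ by $\upepsilon^p_b$ in the derivative: since $\delta^p_b$ is constant in any frame we have $\partial_{\B e_a}(\beta^p_b)=\partial_{\B e_a}(\upepsilon^p_b)$. Second I would exploit the fact that in an $\mg$-orthonormal frame $\B=(P_\mg P)^{-1/2}$ is real symmetric, so that $\B e_a=\beta^d_a e_d=\theta_{ad}e_d$, giving the pointwise identity $\partial_{\B e_a}=\theta_{ad}\partial_{e_d}$ and hence
\[
\Upsilon_{abc}=\theta_{ad}\,\partial_{e_d}(\upepsilon^p_b)\,\bar{\beta}^q_c\,\delta_{pq}.
\]

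Third I would apply the Leibniz rule to pull the factors $\theta_{ad}$ and $\bar{\beta}^q_c\delta_{pq}$ inside the derivative, isolating exactly the coefficient $\Lambda_{abc,d}=\upepsilon^p_b\bar{\beta}^q_c\delta_{pq}\theta_{ad}$ of $\Lambda_{abc}$ in the frame:
\[
\Upsilon_{abc}=\partial_{e_d}\big(\Lambda_{abc,d}\big)-\upepsilon^p_b\,\partial_{e_d}(\bar{\beta}^q_c\theta_{ad})\,\delta_{pq}.
\]
The factor $\upepsilon^p_b$ in the second term is what makes that remainder admissible in the subsequent analysis.

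Fourth, I would convert the flat divergence $\sum_d\partial_{e_d}\Lambda_{abc,d}$ to the Riemannian one. From the formula $\conn^\mg_{e_d}e^l=-w^l_{db}e^b$ and tracing with $\mg^{-1}=\delta^{\cdot\cdot}$ on the $\mg$-orthonormal frame,
\[
\tr\conn^\mg\Lambda_{abc}=\sum_d\partial_{e_d}\Lambda_{abc,d}-\Lambda_{abc,l}\,w^l_{mk}\delta^{mk},
\]
so $\sum_d\partial_{e_d}\Lambda_{abc,d}=\tr\conn^\mg\Lambda_{abc}+e_d(\Lambda_{abc})\,w^d_{mk}\delta^{mk}$, reading $e_d(\Lambda_{abc})$ as the $d$-th component $\Lambda_{abc}(e_d)$. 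For the remaining correction I would use $\B^{-1}\B=\iden$, i.e.\ $\bar{\beta}^l_m\beta^t_l=\delta^t_m$, to convert the flat partial into the $\B$-twisted one: inserting $\partial_{e_d}=\delta^{md}\partial_{e_m}=\bar{\beta}^l_m\beta^t_l\delta^{md}\partial_{e_t}=\bar{\beta}^l_m\delta^{md}\partial_{\B e_l}$ into the correction gives precisely the stated term $-\upepsilon^p_b\partial_{\B e_l}(\bar{\beta}^q_c\theta_{ad})\bar{\beta}^l_m\delta^{md}$. Combining the four steps yields the lemma.

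The calculation is routine once one commits to the index conventions; the only real bookkeeping obstacle is being consistent about which indices are raised or lowered by $\mg$ versus by the Euclidean $\delta$, and about the symmetry $\theta_{ad}=\beta^a_d=\beta^d_a$, which is what allows the product rule in Step~3 to land exactly on $\Lambda_{abc,d}$ and not on some transposed variant.
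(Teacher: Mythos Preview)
Your proof is correct and essentially identical to the paper's argument: the paper computes $\conn^\mg\Lambda_{abc}$ directly in the twisted basis $v_l=\B e_l$, takes the $\mg$-trace, and identifies the first term as $\Upsilon_{abc}$ via $\theta_{ad}\bar\beta^l_m\delta^{md}=\delta^l_a$, whereas you run the same computation backwards, starting from $\Upsilon_{abc}$ in the orthonormal frame and assembling $\tr\conn^\mg\Lambda_{abc}$ via the product rule. The only cosmetic difference is that you work in the $e_d$ frame throughout and convert to $\partial_{\B e_l}$ at the end, while the paper uses the $\B e_l$ basis from the outset; the key identities (symmetry of $\B$, the Leibniz rule, and $\conn_{e_m}e^d=-w^d_{mk}e^k$) are the same.
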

\begin{proof}
We compute $\conn(\Lambda_{abc})$ on letting $v_a = \B e_a$
\begin{align*} 
\conn(\Lambda_{abc}) &= v^l \tensor \conn[v_l](\upepsilon^p_b \bar{\beta}^q_c \delta_{pq} \theta_{ad}\ e^d) \\
	&= \partial_{v_l}(\upepsilon^p_b) \bar{\beta}^q_c \delta_{pq} \theta_{ad} \bar{\beta}^l_m\ e^m \tensor e^d
		+ \upepsilon^p_b \partial_{v_l} (\bar{\beta}^q_c \theta_{ad}) \delta_{pq} \bar{\beta}^l_m\ e^m \tensor e^d \\
		&\qquad\qquad+ e_d(\Lambda_{abc}) v^l \tensor \conn[v_l] e^d.
\end{align*}
 Now, note that $v^l \tensor \conn[v_l] e^d = e^m \tensor \conn[e_m] e^d = -w_{mk}^d e^m \tensor e^k$ and hence,
$$ e_d(\Lambda_{abc}) v^l \tensor \conn[v_l] e^d
	= -e_d(\Lambda_{abc}) w_{mk}^d\ e^m \tensor e^k.$$
Take the trace with respect to $\mg$ to get
\begin{align*}
\tr \cbrac{\partial_{v_l}(\upepsilon^p_b) \bar{\beta}^q_c \delta_{pq} \theta_{ad} \bar{\beta}^l_m\ e^m \tensor e^d}
	&=\partial_{v_l}(\upepsilon^p_b) \bar{\beta}^q_c \delta_{pq} \theta_{ad} \bar{\beta}^l_m \delta^{md} \\
	&=\partial_{v_l}(\upepsilon^p_b) \bar{\beta}^q_c \delta_{pq} \delta^l_a 
	= \partial_{v_a}(\upepsilon^p_b) \bar{\beta}^q_c \delta_{pq} = \Upsilon_{abc}
\end{align*}
since 
$\theta_{ad}\bar{\beta}^l_m \delta^{md} = \sum_{m} \theta_{am} \bar{\beta}^l_m 
	= \sum_{m} \beta^a_m\bar{\beta}^l_m =  \delta^l_a$
by the symmetry of $\beta^p_q$. This yields the stated identity.
\end{proof} 

With this, we obtain the following local decomposition.
\begin{proposition}
\label{Prop:Coeff1}
There are pointwise multiplication operators  $X^{\Omega} \in \Lp{\infty}(\bddlf(\Spinors \Omega))$
and $Y^\Omega \in \Lp{\infty}(\bddlf(\cotanb \Omega \tensor \Spinors  \Omega, \Spinors  \Omega))$
and $\Lambda^\Omega \in \Lp{\infty} \intersect 
	\Lips(\bddlf(\Spinors\Omega, \cotanb \Omega \tensor \Spinors\Omega)))$
such that 
\begin{multline*}
\divv (\Lambda^\Omega \psi) + Y^\Omega \Sconn \psi + X^\Omega\psi \\
	= \frac{1}{4} \sum_{b < a} (\Upsilon_{abi} - \Upsilon_{bai} 
		+ \Upsilon_{iab} - \Upsilon_{aib} + \Upsilon_{iba}
		- \Upsilon_{bia}) \ e_b \cliff e_a \rep \psi 
\end{multline*}
holds distributionally for $\psi \in \Sob{1,2}(\Spinors\cM)$.
Moreover,
\begin{align*} 
&\norm{X^\Omega}_\infty \lesssim \norm{\iden - \B}_\infty,
\quad \norm{Y^\Omega}_\infty \lesssim \norm{\iden - \B}_\infty,\\
&\norm{\Lambda^\Omega}_\infty \lesssim \norm{\iden - \B}_\infty,
\ \text{and}\quad \norm{\Sconn \Lambda^\Omega}_\infty \lesssim 1,
\end{align*}
where the implicit constants in the gradient bound for $\Lambda^\Omega$
is independent of $\Omega$.
\end{proposition}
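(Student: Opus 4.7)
The plan is to exploit the preceding lemma, which expresses each $\Upsilon_{abc}$ as a trace of a covariant derivative of the small 1-form $\Lambda_{abc}$ plus two explicit remainders, and to reorganise the resulting sum into the required three-term form $\divv(\Lambda^\Omega\psi) + Y^\Omega\Sconn\psi + X^\Omega\psi$. The key observation throughout is that $\Lambda_{abc}$ carries the factor $\upepsilon^p_b = \beta^p_b - \delta^p_b$, so $\modulus{\Lambda_{abc}} \lesssim \norm{\iden - \B}_\infty$ by $C$-closeness, while by Lemma \ref{Lem:DerBnd} its covariant derivative is merely bounded.

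First I would define $\Lambda^\Omega$ by prescribing its fibrewise action in the orthonormal frame on $\Omega$ as
$$\Lambda^\Omega\psi := \frac{1}{4}\sum_{b<a}\bigl(\Lambda_{abi} - \Lambda_{bai} + \Lambda_{iab} - \Lambda_{aib} + \Lambda_{iba} - \Lambda_{bia}\bigr)\tensor(e_b \cliff e_a \rep \psi),$$
where the cotangent-slot index on each $\Lambda_{rst}$ is summed against the Clifford factor. The bound $\norm{\Lambda^\Omega}_\infty \lesssim \norm{\iden - \B}_\infty$ follows from the smallness of $\upepsilon$ together with the pointwise bounds on $\bar{\beta}, \theta$, while $\norm{\Sconn\Lambda^\Omega}_\infty \lesssim 1$ follows from Lemma \ref{Lem:DerBnd} combined with the uniform frame regularity $\modulus{\conn e_b}, \modulus{\conn \spin{e}_\alpha} \lesssim 1$ produced in the proof of Theorem \ref{Thm:MainApp}. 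I then compute $\divv(\Lambda^\Omega\psi)$ on $\Ck{\infty}(\Spinors\Omega)$ via the product rule for $\divv$ applied to $\omega \tensor \chi \in \cotanb\Omega \tensor \Spinors\Omega$, which produces a term $(\tr\conn^\mg\omega)\chi$ and a term pairing $\omega^\sharp$ with $\Sconn\chi$. By the preceding lemma, the $\tr\conn^\mg(\Lambda_{rst})\,e_b\cliff e_a \rep \psi$-contributions reproduce exactly the target alternating sum of $\Upsilon$-terms, modulo the two remainders from the lemma; the second product-rule contribution is a bounded pairing of the small tensor $\Lambda^\Omega$ with $\Sconn\psi$ which I take as the definition of $-Y^\Omega\Sconn\psi$, giving $\norm{Y^\Omega}_\infty \lesssim \norm{\Lambda^\Omega}_\infty \lesssim \norm{\iden - \B}_\infty$ automatically.

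Rearranging yields the stated identity with $X^\Omega\psi$ collecting two further contributions: (a) the lemma's remainder $\upepsilon^p_b\partial_{\B e_l}(\bar{\beta}^q_c\theta_{ad})\bar{\beta}^l_m\delta^{md}$ multiplied by $e_b\cliff e_a \rep \psi$, and (b) the terms produced when the product rule for $\divv$ differentiates the Clifford factors $e_b \cliff e_a$ and the spin frame $\spin{e}_\alpha$ against the small 1-form $\Lambda_{rst}$. In both cases the factor $\upepsilon$ is retained while the remaining derivatives are controlled by Lemma \ref{Lem:DerBnd} and the uniform frame regularity, so $\norm{X^\Omega}_\infty \lesssim \norm{\iden - \B}_\infty$. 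The pointwise identity on $\Ck{\infty}(\Spinors\Omega)$ then extends to a distributional identity on $\Sob{1,2}(\Spinors\cM)$ by pairing with test sections and invoking the density of $\Ck[c]{\infty}$ in $\SobH{1}$.

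The main technical obstacle is the bookkeeping around the third summand $e_d(\Lambda_{abc})w^d_{mk}\delta^{mk}$ of the preceding lemma: this is the derivative of a small object, hence only bounded, so it must not survive into $X^\Omega$, whose bound requires a factor of $\norm{\iden - \B}_\infty$. The resolution is that this summand is exactly the Christoffel-type correction supplied by the divergence operator when passing from coordinate to covariant derivatives in the product rule; reading the computation of the preceding lemma in reverse shows that it cancels the corresponding correction term that arises when expanding $\divv(\Lambda^\Omega\psi)$, and is therefore absorbed into $\divv(\Lambda^\Omega\psi)$ rather than into $X^\Omega$.
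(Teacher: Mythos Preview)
Your overall strategy matches the paper's: define $\Lambda^\Omega$ as the alternating sum of the $\Lambda_{rst}$ tensored with the Clifford action, expand $\divv(\Lambda^\Omega\psi)$ by the product rule, invoke the preceding lemma to recognise the $\Upsilon$-sum, and collect the leftover pieces into $X^\Omega$ and $Y^\Omega$. The definitions and bounds for $\Lambda^\Omega$ and $Y^\Omega$ are exactly as in the paper.

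The last paragraph, however, rests on a misreading. In the expression $e_d(\Lambda_{abc})w^d_{mk}\delta^{mk}$, the symbol $e_d(\Lambda_{abc})$ is \emph{not} a directional derivative of $\Lambda_{abc}$: it is the pairing of the vector $e_d$ with the $1$-form $\Lambda_{abc}$, i.e.\ the $d$-th coefficient $\upepsilon^p_b\bar\beta^q_c\delta_{pq}\theta_{ad}$ (the paper makes this explicit when it writes $\Lambda_{rst}=e_d(\Lambda_{rst})\,e^d$). This coefficient carries the factor $\upepsilon$ and is therefore $O(\norm{\iden-\B}_\infty)$; multiplied by the bounded Christoffel symbols $w^d_{mk}$ it remains small. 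There is thus no obstacle: the paper simply places this term, together with the other remainder from the lemma and the Clifford-frame contribution you list as (b), directly into $X^\Omega$, and the required bound $\norm{X^\Omega}_\infty\lesssim\norm{\iden-\B}_\infty$ is immediate.

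Your proposed cancellation does not occur. The term $e_d(\Lambda_{abc})w^d_{mk}\delta^{mk}$ in the lemma is itself the Christoffel correction produced when $\conn$ hits the frame $e^d$ in $\Lambda_{abc}=e_d(\Lambda_{abc})e^d$; it is already part of $\tr\conn^\mg(\Lambda_{abc})$, and no second copy appears in $\divv(\Lambda^\Omega\psi)$ to cancel against. Fortunately this does not damage the proof: once you correct the reading of $e_d(\Lambda_{abc})$, the ``obstacle'' disappears and the term simply joins $X^\Omega$.
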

\begin{proof}
By the completeness and smoothness of $\mg$ along
with \ref{Hyp:Inj} and \ref{Hyp:MainAppLast} of Theorem \ref{Thm:MainApp}
we have uniform constants $C_1, C_2 > 0$
so that 
$\modulus{\conn e_a} \leq C_1$
and $\modulus{\partial_{e_c} \mgt_{ab}} \leq C_2$ inside $\Omega$.
Let $\Lambda^\Omega \psi = \Lambda_{rst} \tensor (e_b \rep e_a \rep \psi) \Rd 
= (\upepsilon^p_s \bar{\beta}_t^q \delta_{pq} \delta_{rk} \beta^{k}_d)\ 
	e^d \tensor (e_b \rep e_a \rep \psi)$ \Bk
and note that 
$$
\conn(\Lambda_{rst} \tensor (e_b \rep e_a \rep \psi))
	= \conn(\Lambda_{rst}) \tensor  (e_b \rep e_a \rep \psi)
	+ \Lambda_{rst}  \tensor \Sconn (e_b \rep e_a \rep \psi),$$
where
$$\Sconn(e_b \rep e_a \rep \psi)
	= e^m \tensor \conn[e_m](e_b \rep e_a) \rep \psi
		+ e^m \tensor (e_b \rep e_a)\rep \Sconn[e_m]\psi.$$
Taking traces with respect to $\mg$, we obtain that
\begin{multline*}
\tr \conn(\Lambda_{rst} (e_b \rep e_a \rep \psi))
	= (\tr \conn(\Lambda_{rst}))  (e_b \rep e_a \rep \psi)
	+\tr( \Lambda_{rst}  \tensor \Sconn (e_b \rep e_a \rep \psi)).
\end{multline*}
Moreover, \Rd note that we can write  \Bk 
$\Lambda_{rst} = e_d(\Lambda_{rst}) e^d$ and therefore,
we obtain that
\begin{multline*}
 \Lambda_{rst}  \tensor \Sconn (e_b \rep e_a \rep \psi)
	=  e_d(\Lambda_{rst}) e^d \tensor e^m \tensor \Sconn[e_m](e_b \rep e_a) \rep \psi
		+ e_d(\Lambda_{rst}) e^d \tensor e^m \tensor (e_b \rep e_a)\rep \Sconn[e_m]\psi
\end{multline*}
so that
\begin{multline*}
\tr ( \Lambda_{rst}  \tensor \Sconn (e_b \rep e_a \rep \psi))
	=  e_d(\Lambda_{rst}) \delta^{md} \Sconn[e_m](e_b \rep e_a) \rep \psi
		+ e_d(\Lambda_{rst}) \delta^{dm} (e_b \rep e_a)\rep \Sconn[e_m]\psi.
\end{multline*}
Define 
\begin{multline*}
X_{rst}^\Omega \psi=  e_d(\Lambda_{rst}) \delta^{md} \Sconn[e_m](e_b \rep e_a) \rep \psi \\
	+ \cbrac{e_d(\Lambda_{rst}) w_{mk}^d \delta^{mk} 
	- \upepsilon^p_s \partial_{\B e_l} (\bar{\beta}^q_t \theta_{rd}) \bar{\beta}^l_m \delta^{md}} e_b \rep e_a \rep \psi,
\end{multline*}
and for $\phi \in \Sect(\cotanb\cM \tensor \Spinors\cM)$, define 
$$ Y_{rst}^\Omega \phi 
	= Y^\Omega(\phi^\alpha_a e^a \tensor \spin{e}_\alpha)
	= e_d(\Lambda_{rst}) \delta^{da} \phi^\alpha_a (e_b \rep e_a)\rep \spin{e}_\alpha.$$

Estimating with Lemma \ref{Lem:DerBnd},
we get $\norm{X_{rst}^\Omega}_\infty \lesssim \norm{\iden - \B}_\infty$,
$\norm{Y_{rst}^\Omega}_\infty \lesssim \norm{ \iden - \B}_\infty$, 
$\norm{\Lambda_{rst}} \lesssim \norm{\iden - \B}_\infty$ and $\modulus{\Sconn \Lambda_{rst}^\Omega} \lesssim 1$.

Lastly, by taking a sum over permutations over $\set{abc}$ for the indices
$\set{r,s,t}$, the existence of coefficients $X^\Omega$, $Y^\Omega$ and $\Lambda^\Omega$
as stated in the conclusion is then immediate.
\end{proof} 

By collating our efforts throughout this section, we obtain the following
main result.

\begin{proposition}
\label{Prop:OpDiff}
\label{Prop:StrucL2}
We have
\begin{equation}
\label{Eq:Diff}
\SDirp \psi = \SDir \psi + A_1 \Sconn \psi + \divv A_2 \psi + A_3 \psi,
\end{equation}
distributionally for $\psi \in \Sob{1,2}(\Spinors\cM)$
where the coefficients $A_1, A_2, A_3$ satisfy
\begin{align*}
&A_1 \in \Lp{\infty}(\bddlf(\cotanb\cM \tensor \Spinors\cM, \Spinors \cM)),  \\ 
&A_2 \in \Lp{\infty}(\bddlf(\Sob{1,2}(\Spinors\cM), \dom(\divv))) \\
&A_3 \in \Lp{\infty}(\bddlf(\Spinors \cM))
\end{align*}
with $\norm{A_1}_\infty + \norm{A_2}_\infty + \norm{A_3}_\infty \lesssim \norm{\iden - \B}_\infty$
and $\norm{\Sconn A_2} \lesssim 1$.
\end{proposition}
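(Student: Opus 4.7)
The plan is to globalise the already-local identities of Propositions~\ref{Prop:GlobDiff} and~\ref{Prop:Coeff1} by stitching them together with a uniformly locally finite partition of unity, and to keep careful track of which pieces land in $A_1$, which in $\divv A_2$, and which in $A_3$. Proposition~\ref{Prop:GlobDiff} already produces the $A_1\Sconn\psi$ part: the frame-invariant operator $Z$ satisfies $\norm{Z}_\infty\lesssim \norm{\iden-\B}_\infty$, so we set $A_1 := -Z$ (plus a further first-order contribution from $\sum_p \eta_p Y^{\Omega_p}$, see below). Two further ingredients must be recast: the zeroth order term $-((\iden-\B)e^i)\rep \conform^2_E(e_i)\rep\psi$, and the expression $e^i\rep(\conform^2_E(e_i)-\U^{-1}\conform^2_F(\tilde e_i))\rep\psi$.

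The first of these is purely multiplicative: inside each GBG chart, $\conform^2_E$ is uniformly bounded since $\modulus{\conn e_a}\lesssim 1$ by hypotheses~\ref{Hyp:Inj}--\ref{Hyp:Curv}, while the factor $(\iden-\B)$ is globally of size $\norm{\iden-\B}_\infty$, so this contributes to $A_3$. In the expansion~\eqref{Eq:MainExp2}, the $\Xi$-part factors as a small coefficient $\delta^q_a\delta^r_b\delta^s_c-\beta^q_a\beta^r_b\bar\beta^s_c$ of size $\norm{\iden-\B}_\infty$ against the uniformly bounded Lie coefficients $\mg(\Ld{e_a,e_b},e_c)$, again a pointwise multiplication falling into $A_3$. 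The only genuinely non-multiplicative part is the $\Upsilon$-sum, which is precisely what Proposition~\ref{Prop:Coeff1} handles on each ball $\Omega_p=B(x_p,r_H)$ in the form $\divv(\Lambda^{\Omega_p}\psi)+Y^{\Omega_p}\Sconn\psi+X^{\Omega_p}\psi$ with uniform in $p$ bounds. I choose the covering $\set{\Omega_p}$ and partition of unity $\set{\eta_p}$ supplied by Lemma~\ref{Lem:Cover}, multiply each local identity by $\eta_p$, and use the Leibniz rule
\begin{equation*}
\eta_p\,\divv(\Lambda^{\Omega_p}\psi)=\divv(\eta_p\Lambda^{\Omega_p}\psi)-\mg(\extd\eta_p,\Lambda^{\Omega_p}\psi)
\end{equation*}
to set $A_2:=\sum_p \eta_p\Lambda^{\Omega_p}$, absorbing the boundary terms $-\mg(\extd\eta_p,\Lambda^{\Omega_p}\,\cdot\,)$ and $\sum_p\eta_p X^{\Omega_p}$ into $A_3$, and $\sum_p\eta_p Y^{\Omega_p}$ into $A_1$. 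Since $\sum_p\eta_p=1$, summing the local identities yields \eqref{Eq:Diff} distributionally on $\Sob{1,2}(\Spinors\cM)$.

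The bounds then follow from uniform local finiteness: at every point at most $M$ terms are nonzero, so $\norm{A_i}_\infty \lesssim \norm{\iden-\B}_\infty$ is immediate from the uniform in $p$ estimates of Proposition~\ref{Prop:Coeff1} together with $\sum_p\modulus{\extd\eta_p}\lesssim 1$ from Lemma~\ref{Lem:Cover}. For the crucial derivative bound one writes $\Sconn(\eta_p\Lambda^{\Omega_p})=(\extd\eta_p)\tensor\Lambda^{\Omega_p}+\eta_p\Sconn\Lambda^{\Omega_p}$ and applies the estimates $\norm{\Lambda^{\Omega_p}}_\infty\lesssim\norm{\iden-\B}_\infty\leq 1$ and $\modulus{\Sconn\Lambda^{\Omega_p}}\lesssim 1$, the second having an implicit constant independent of $\Omega_p$ by Proposition~\ref{Prop:Coeff1}. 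I expect the main obstacle to be purely organisational: one must check that every zeroth-order byproduct of the Leibniz manipulation is genuinely controlled by $\norm{\iden-\B}_\infty$ and that no small factor is accidentally lost in the partition of unity sums, and one must confirm that $A_2$ genuinely maps $\SobH{1}(\Spinors\cM)$ into $\dom(\divv)$, which is exactly the role of the bound $\norm{\Sconn A_2}_\infty\lesssim 1$ and density of $\Ck[c]{\infty}(\Spinors\cM)$ in $\SobH{1}(\Spinors\cM)$.
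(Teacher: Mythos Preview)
Your proposal is correct and follows essentially the same route as the paper's own proof: globalise Propositions~\ref{Prop:GlobDiff} and~\ref{Prop:Coeff1} via the uniformly locally finite partition of unity from Lemma~\ref{Lem:Cover}, push $\eta_p$ inside $\divv$ by the Leibniz rule, and absorb the resulting commutator terms into $A_3$. Your bookkeeping of which terms go where (the $\Xi$-part and $-((\iden-\B)e^i)\rep\conform^2_E(e_i)$ into $A_3$, $Z$ and $\sum_p\eta_p Y^{\Omega_p}$ into $A_1$, $\sum_p\eta_p\Lambda^{\Omega_p}$ into $A_2$) matches the paper's, and your remarks on the derivative bound for $A_2$ and on $A_2$ landing in $\dom(\divv)$ are exactly the points the paper checks.
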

\begin{proof}
First, we remark that by the assumptions in 
Theorem \ref{Thm:MainApp}, exist constants $C_1, C_2, C_3 > 0$, 
a covering $\set{B_j}$  which are of fixed radius $r > 0$
with orthonormal frames $e_{j,k}$
inside $B_j$, and a Lipschitz partition of unity $\set{\eta_p}$
subordinate to $\set{B_p}$ satisfying:
\begin{enumerate}[(a)]
\item $\modulus{\conn e_{j,i}} \leq C_1$
	for all $i$ almost-everywhere on $\close{B_p}$, 
\item $\modulus{\partial_{e_{j,k}}\ \mgt(e_{j,i}, e_{j, l})} \leq C_2$, 
	where $\mgt = \pullb{\zeta}\mh$, and 
\item $\modulus{\conn{\eta_j}} \leq C_3$ in $B_j$. 
\end{enumerate}

Let
$$W^{B_j} \psi = \frac{1}{4} \sum_{b < a} (\Xi^{qrs}_{abi} + \Xi_{iab}^{qrs} - \Xi^{qrs}_{bia}) 
		\mg(\Ld{e_q,e_r}, e_s)\ e_b \rep  \e_a 
	\Rd - ((\iden - \B)e^i) \rep \conform^2_E(e_i),\Bk$$
and recall the operator $Z$ from Proposition \ref{Prop:GlobDiff},
$\Lambda^U$, and $Y^U$ and $X^U$ from Proposition \ref{Prop:Coeff1}.
Inside $B_j$, we have the expression
$$ (\SDirp - \SDir)\psi = \sum_j \eta_j \divv (\Lambda^{B_j}\psi) + (Z + \sum_{j} \eta^j Y^{B_j}) \conn \psi 
	+ \sum_j \eta_j X^{B_j} \psi + \sum_j \eta_j W^{B_j} \psi$$

On noting that $\divv(\eta \phi) =  \eta \divv \phi + \tr (\conn \eta \tensor \phi)$
for $\eta \in \Ck{\infty}(\cM)$ and $\phi \in \Sect(\cotanb\cM \tensor\cV)$
differentiable almost-everywhere, we let  
\begin{align*}
A_1 &= Z + \sum_{j} Y^{B_j} \eta_j, \\ 
A_2 &= \sum_{j} \Lambda^{B_j} \eta_j, \\
A_3 &= X^{B_j} \eta_j + \sum_{j} W^{B_j} \eta_j - \sum_{j} \tr ((\conn \eta_j) \tensor \psi).
\end{align*}
It is easy to check that the decomposition of the operator
holds almost-everywhere.
The conditions (a) and (b) yield that 
$\norm{A_1} + \norm{A_2} +  \norm{A_3} \lesssim \norm{\iden - \B}_\infty$
by Propositions \ref{Prop:Coeff1}. Moreover, 
$$ \modulus{\conn A_2} 
	\leq \sum_j \modulus{\conn \eta_j} \modulus{\Lambda^{B_j}} 
	+  \sum_j \eta_j \modulus{\Lambda^{B_j}} \lesssim 1,$$
almost-everywhere uniformly with the constant depending on $C_1, C_2$ and $C_3$.
\end{proof}

\subsection{Riesz-Weitzenb\"ock formula for Dirac operator}
\label{Sec:Weitz}

The goal of this subsection is to demonstrate
\ref{Hyp:Weitz}. We begin by noting the following.

\begin{lemma}
\label{Lem:Density}
\Rd The Sobolev spaces satisfy \Bk
$\Sob[0]{2,2}(\Spinors\cM) = \Sob{2,2}(\Spinors\cM)$.
\end{lemma}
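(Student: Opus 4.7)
The inclusion $\Sob[0]{2,2}(\Spinors\cM) \subset \Sob{2,2}(\Spinors\cM)$ is immediate from the definition, so the content is the reverse inclusion. Given $u \in \Sob{2,2}(\Spinors\cM)$, the plan is to produce a sequence in $\Ck[c]{\infty}(\Spinors\cM)$ converging to $u$ in the $\Sob{2,2}$ norm by a two-step approximation: first multiply by a suitable sequence of cutoffs to achieve compact support, and then regularise the compactly supported sections by mollification inside a finite family of harmonic charts.

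For the first step, the key is to construct, using the completeness of $\mg$ together with the hypotheses \ref{Hyp:Inj} and \ref{Hyp:Curv}, a sequence of cutoffs $\chi_k \in \Ck{\infty}(\cM)$ with $\spt \chi_k$ compact, $0 \leq \chi_k \leq 1$, $\chi_k \to 1$ pointwise, and such that $\norm{\conn^\mg \chi_k}_\infty \to 0$ and $\norm{(\conn^\mg)^2 \chi_k}_\infty \to 0$ as $k \to \infty$. Cutoffs with $\norm{\conn \chi_k}_\infty \to 0$ are available on any complete manifold (cf.\ the construction used in Proposition \ref{Prop:EssSA}), but to gain uniform control on the Hessian one invokes the same harmonic coordinate machinery of Hebey that supplies the charts $\psi_x:B(x,r_H)\to\R^n$ with uniform $\Ck{2}$ control on $\mg$, precomposing a smooth distance-like exhaustion with a smooth concave function that flattens near its maximum. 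Then the product rule $\conn^2(\chi_k u) = \chi_k \conn^2 u + 2\, \conn \chi_k \tensor \conn u + (\conn^2 \chi_k)\, u$, together with the already proved equivalence $\norm{u}_{\SobH{1}} \simeq \norm{\SDir_\mg u} + \norm{u}$ from Proposition \ref{Prop:NormCmp} and the Riesz--Weitzenb\"ock estimate (Proposition \ref{Prop:RW}) controlling $\norm{\conn^2 u}$ in terms of $\norm{\SDir_\mg^2 u} + \norm{u}$, shows that $\chi_k u \to u$ in $\Sob{2,2}(\Spinors\cM)$ by dominated convergence.

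For the second step, given a compactly supported $v \in \Sob{2,2}(\Spinors\cM)$, cover $\spt v$ by finitely many harmonic charts $B(x_j,r_H)$ of the kind supplied by the proof of Theorem \ref{Thm:MainApp}, pick a subordinate smooth partition of unity $\{\eta_j\}$, and write $v = \sum_j \eta_j v$. In each chart the metric $\mg$ is uniformly $\Ck{2}$-equivalent to the Euclidean one, so the spinor bundle trivialises smoothly and the Sobolev norm $\Sob{2,2}$ transported through the chart is equivalent to the standard Euclidean $\Sob{2,2}$ norm on compactly supported functions. Standard Friedrichs mollification in $\R^n$ then produces $\Ck[c]{\infty}$ approximants of $\eta_j v$ in $\Sob{2,2}$, and summing over $j$ yields the required approximation of $v$.

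The main obstacle is the first step, since Hessian-controlled exhaustion functions are not available on arbitrary complete manifolds; this is exactly where the quantitative assumptions on the injectivity radius and on Ricci and its first derivatives are essential, through Hebey's harmonic coordinate construction already invoked in the proof of Theorem \ref{Thm:MainApp}. Once such cutoffs are in hand, everything else is essentially the standard Meyers--Serrin argument carried out chart by chart.
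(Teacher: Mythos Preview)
Your overall strategy --- cutoff to compact support using smooth exhaustion functions with controlled first and second covariant derivatives, then mollify chart by chart --- is exactly the approach the paper takes, which in turn follows Hebey's Proposition 3.2. The paper singles out precisely the product rule
\[
\modulus{\conn^2(\eta u)} \leq \modulus{\eta}\,\modulus{\conn^2 u} + 2\modulus{\conn\eta}\,\modulus{\conn u} + \modulus{\conn^2\eta}\,\modulus{u}
\]
as the key point, and your argument uses it in the same way.

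There is, however, a genuine circularity in your write-up. You invoke Proposition~\ref{Prop:RW} (the Riesz--Weitzenb\"ock estimate) to control $\norm{\conn^2 u}$, but in the paper Proposition~\ref{Prop:RW} \emph{uses} Lemma~\ref{Lem:Density} in its proof: the very first line of that proof reads ``By Lemma~\ref{Lem:Density}, in order to obtain the conclusion, it suffices to establish \ldots''. So you cannot appeal to it here. Fortunately the appeal is entirely unnecessary: since $u \in \Sob{2,2}(\Spinors\cM)$ by hypothesis, you already have $u,\ \conn u,\ \conn^2 u \in \Lp{2}$ by definition, and the product rule above together with dominated convergence (the derivatives of $\chi_k$ being supported outside a growing ball, or tending to zero in $\Lp{\infty}$) gives $\chi_k u \to u$ in $\Sob{2,2}$ directly. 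The reference to Proposition~\ref{Prop:NormCmp} is likewise not needed for this step. Strip those two invocations and your proof is correct and essentially identical to the paper's.
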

\begin{proof}
\Rd 
Due to the geometric assumptions \ref{Hyp:Inj} and \ref{Hyp:Curv}
in Theorem \ref{Thm:MainApp},  \Bk 
the argument to prove the assertion proceeds 
exactly as Proposition
3.2 in \cite{Hebey}, which is a version of this
result for functions. The crucial point in 
the proof is to note that  by the derivation
property for $\conn$, for $\eta \in \Ck{\infty}(\cM)$ and $u \in \Ck{\infty}(\cV)$
\[\modulus{\conn^2 (\eta u)}
	\leq \modulus{\eta} \modulus{\conn^2 u} + 2 \modulus{\conn \eta} \modulus{\conn u}
	+ \modulus{\conn^2 \eta} \modulus{u}.\qedhere \] 
\end{proof}

With this, we obtain the following
Riesz-Weizenb\"ock estimate.

\begin{proposition}
\label{Prop:RW}
There exists $C_W > 0$ such that 
$ \norm{\Sconn^2 \psi} \leq C_W ( \norm{\SDir^2_\mg \psi} + \norm{\psi})$
for all $\psi \in \dom(\SDir^2_\mg) = \Sob[0]{2,2}(\Spinors\cM) = \Sob{2,2}(\Spinors\cM)$.
\end{proposition}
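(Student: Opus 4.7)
The plan is to deduce this Riesz-Weitzenböck estimate from the classical Bochner-Lichnerowicz-Weitzenböck identity
\[
\SDir_\mg^2 \psi \;=\; -\tr \Sconn^2 \psi \;+\; \tfrac{1}{4}\Rs^\mg \psi,
\]
combined with local elliptic regularity for the rough Laplacian $\Delta_B \psi := -\tr \Sconn^2 \psi$, transferred to Euclidean harmonic charts using Lemma~\ref{Lem:Cover}. Since $\mg$ is smooth with $|\Ric_\mg|\lesssim 1$ from hypothesis~\ref{Hyp:Curv}, the scalar curvature $\Rs^\mg$ is bounded, so the identity immediately yields $\|\Delta_B \psi\| \lesssim \|\SDir_\mg^2 \psi\| + \|\psi\|$. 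Therefore the task reduces to proving the elliptic-regularity bound
\[
\|\Sconn^2 \psi\| \;\lesssim\; \|\Delta_B \psi\| \,+\, \|\psi\|.
\]

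By Lemma~\ref{Lem:Density} it suffices to treat $\psi \in \Ck[c]{\infty}(\Spinors\cM)$ and pass to the limit. The first step will be to take the partition of unity $\{\eta_i\}$ subordinate to the harmonic-coordinate balls $\{B(x_i,r_H)\}$ from Lemma~\ref{Lem:Cover}, and write $\psi = \sum_i \eta_i \psi$. Using the product rule together with the uniform bounds $\sum_i(|\eta_i| + |\conn \eta_i| + |\conn^2 \eta_i|)\lesssim 1$ and $1 \leq M \sum_i \eta_i^2$, the estimate will follow if I can prove, for each $i$, a localised bound
\[
\|\Sconn^2(\eta_i \psi)\|^2 \;\lesssim\; \|\Delta_B(\eta_i\psi)\|^2 + \|\conn(\eta_i\psi)\|^2 + \|\eta_i\psi\|^2,
\]
and then sum; the cross-terms generated by expanding $\Delta_B(\eta_i\psi) = \eta_i \Delta_B \psi + 2\langle \conn\eta_i,\Sconn \psi\rangle + (\Delta\eta_i)\psi$ and the analogous product-rule expansion for $\Sconn^2(\eta_i\psi)$ are controllable because the partition-of-unity derivatives are uniformly bounded. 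The first-order term $\|\Sconn \psi\|$ is then absorbed by the standard interpolation inequality $\|\Sconn\psi\|^2 \lesssim \|\Sconn^2\psi\|\|\psi\|$ combined with Young's inequality.

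For the localised second-order bound inside a single chart $B(x_i, r_H)$, the frames $\set{e_j}$ and $\set{\spin{e}_\alpha}$ constructed from Hebey's harmonic coordinates satisfy $\|\mg_{ij}\|_{\Ck{2}} \lesssim 1$ uniformly (as used in the proof of Theorem~\ref{Thm:MainApp}), so in the coordinate representation $\Delta_B$ becomes the operator
\[
-g^{ab} \partial_a \partial_b + (\text{first and zeroth order, bounded coefficients})
\]
acting componentwise on spinor coefficients, and it is uniformly elliptic with constants independent of $i$. Standard interior Euclidean $\Lp{2}$ elliptic regularity (applied on balls of fixed radius to the compactly-supported-in-the-chart sections $\eta_i \psi$) then yields the required bound
\[
\|\partial^2 (\eta_i\psi)\|^2_{\Lp{2}(B(x_i,r_H))}\lesssim \|\Delta_B(\eta_i\psi)\|^2 + \|\eta_i\psi\|^2,
\]
and the $C^2$ control of the frames converts $\partial^2$ bounds to $\Sconn^2$ bounds modulo lower-order terms.

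The main obstacle is uniformity: ensuring the elliptic constants in each chart are controlled by a single constant depending only on $\dim\cM$, $\kappa$, $C_R$. This is precisely where the harmonic-coordinate machinery of Hebey (Theorem~1.2 in \cite{Hebey}), available because of \ref{Hyp:Inj} and \ref{Hyp:Curv}, pays off: the uniform $\Ck{2}$-control of $\mg_{ij}$ in each chart gives uniform ellipticity constants and uniform coefficient bounds, so the Euclidean constants are truly chart-independent, and summing against the partition of unity produces the desired global bound.
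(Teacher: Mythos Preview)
Your approach is correct, but it takes a somewhat different route from the paper's own proof. The paper does \emph{not} invoke the Lichnerowicz formula; instead it works directly with $\SDir_\mg^2$ inside each harmonic-coordinate ball, isolates its second-order coordinate part $L$, and applies Plancherel's theorem in Euclidean space to obtain $\|D_2\psi\|\lesssim\|L\psi\|+\|\psi\|$ for spinors supported in the ball. The lower-order terms separating $L$ from $\SDir_\mg^2$ are then bounded using the uniform $\Ck{2}$-control of the frames. The globalisation step via the partition of unity from Lemma~\ref{Lem:Cover} is the same as yours, but the paper handles the residual first-order term $\|\Sconn\psi\|$ by citing Proposition~\ref{Prop:NormCmp} and the identity $\|\SDir_\mg\psi\|^2=\inprod{\SDir_\mg^2\psi,\psi}$, rather than an interpolation inequality.

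What each approach buys: your route via Lichnerowicz is more conceptual and makes transparent why the Ricci bound enters (it bounds the scalar curvature in the zeroth-order term), reducing the problem to standard elliptic regularity for the rough Laplacian. The paper's route is more direct in that it never leaves the Dirac operator: it treats $\SDir_\mg^2$ as a second-order elliptic system in its own right and does the Fourier analysis against that principal symbol. Both rely on exactly the same geometric input, namely Hebey's uniform $\Ck{2}$ harmonic coordinates from \ref{Hyp:Inj}--\ref{Hyp:Curv} and the partition of unity of Lemma~\ref{Lem:Cover}. One minor point: your interpolation $\|\Sconn\psi\|^2\lesssim\|\Sconn^2\psi\|\|\psi\|$ is actually a consequence of the simpler identity $\|\Sconn\psi\|^2=\inprod{\Delta_B\psi,\psi}\leq\|\Delta_B\psi\|\|\psi\|$, which already suffices since you control $\|\Delta_B\psi\|$ directly.
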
 
\begin{proof}
Since our metric $\mg$ is smooth, \Rd by Theorem 2.2 in \cite{Chernoff}, \Bk
it is well known 
that $\Ck[c]{\infty}(\Spinors\cM)$
is dense  (with norm $\norm{\mdot}_{\SDir^2}$) 
in the domain of $\SDir^2_\mg$ (and in fact 
for any positive power  $\SDir^k_\mg$).
By Lemma \ref{Lem:Density}, in order to obtain the conclusion, 
it suffices to establish 
\begin{equation}
\label{Eq:EE}
\norm{\Sconn^2 \psi} \lesssim \norm{\SDir^2_\mg \psi} + \norm{\psi}
\end{equation} 
for all $\psi \in \Ck[c]{\infty}(\Spinors\cM)$.

First we show that \eqref{Eq:EE} holds for $\psi \in \Ck[c]{\infty}(\Spinors\cM)$
with $\spt \psi \subset B(x,r_H)$. 
To consider just the second-order part of the operator $\SDir_\mg^2$,
we define
\begin{multline*}
L\psi = \SDir^2_\mg \psi - e^i \rep e^j \rep ( (e^j \psi_\alpha) \Sconn[e_i] \spin{e}_\alpha 
	+ (e_i \psi_\alpha) \Sconn[e_j] \spin{e}_\alpha 
	+ \psi^\alpha \Sconn[e_i]\Sconn[e_j] \spin{e}_\alpha)\\ 
		- e^i \rep \conn[e_i]e^j \rep \Sconn[e_j]\psi.
\end{multline*}
Estimating this operator by Plancherel's theorem, we get 
$\norm{D_2\psi}_{\Lp{2}(B(x,r_H))}^2 \lesssim \norm{L \psi}^2 + \norm{\psi}^2$,
where $D_2 = e^i \tensor e^j \tensor (e_i e_j \psi_\alpha) \spin{e}_\alpha$
is the second-order part of the Hessian. Also, 
\begin{multline*}
\norm{L \psi}^2 
	\lesssim  \norm{\SDir^2_{\mg} \psi}^2
		+ \max_\alpha \norm{\spin{e}_\alpha}_{\Ck{1}(B(x,r_H))}^2 \norm{\Sconn \psi}^2
		+ \norm{\spin{e}_\alpha}_{\Ck{2}(B(x,r_H))}^2 \norm{\psi}^2 \\
		+ \max_j \norm{e_j}^2_{\Ck{1}(B(x,r_H))} \norm{\Sconn \psi}^2.
\end{multline*}
As we have noted in 
\eqref{Eq:FB}, \Rd a consequence of the assumptions \ref{Hyp:MainAppFirst}-\ref{Hyp:MainAppLast}
in Theorem \ref{Thm:MainApp} \Bk is that
$\max_\alpha \modulus{\Sconn \spin{e}_\alpha} \lesssim 1$
and $\max_\alpha \modulus{\Sconn^2 \spin{e}_\alpha} \lesssim 1$
inside $B(x,r_H)$  \Rd with constants independent of $B(x,r_H)$. \Bk
Again, by Plancherel's theorem,
$$\norm{\Sconn \psi}^2 \lesssim \norm{\SDir_\mg \psi}^2 + \norm{\psi}^2 
		\lesssim \norm{\SDir^2_\mg \psi}^2 + \norm{\psi}^2.$$
Combining these estimates, we obtain that  
$\norm{\conn^2 \psi}^2 \lesssim \norm{\SDir_\mg^2 \psi}^2 + \norm{\psi}^2$.

Now, let $\psi \in \Ck[c]{\infty}(\Spinors\cM)$
and note by the assumptions we make, 
on invoking Lemma \ref{Lem:Cover}, we obtain
$C_H > 0$ such that 
$\set{B_i = B(x_i,r_H)}$ is a cover for $\cM$ with
$\norm{\mg_{ij}}_{\Ck{2}(B_i))} \leq C_H$ and a
smooth partition of unity $\set{\eta_i}$ such that
$\sum_i \modulus{\conn^j \eta_i}\leq  C_H$
for $j = 0, \dots, 3$. \Rd Moreover, this lemma guarantees
that there exists \Bk $M > 0$ such that $1 \leq M \sum_i \eta_i^2$.
From the derivation property for $\conn$, we obtain 
$$ \modulus{\eta_i \conn^2 \psi} 
	\lesssim \modulus{\conn^2 \eta_i}^2 \modulus{\psi}^2
		+ \modulus{\conn \eta_i}^2 \modulus{\conn \psi}^2 
		+ \modulus{\conn^2(\eta_i \psi)}^2,$$
and we have that
\begin{align*}
\norm{\conn^2 \psi}^2 &\leq  \int M \sum_i \eta_i^2 \modulus{\conn^2 \psi}^2\ d\mu \\
	&\leq M \int \sum_i \modulus{\conn^2 \eta_i}^2 \modulus{\psi}^2\ d\mu
	 	+ M \int \sum_i \modulus{\conn \eta_i}^2 \modulus{\conn \psi}^2\ d\mu \\
		&\qquad\qquad+ M \int \sum_i \modulus{\conn^2 (\eta_i \psi)}^2\ d\mu\\
	&\lesssim \norm{\psi}^2 + \norm{\conn \psi}^2 + \sum_i \norm{\conn^2 (\eta_i \psi)}^2.
\end{align*}
Now, 
$\spt (\eta_i \psi) \subset \Ball(x_i, r_H)$ and so 
$\norm{\conn^2 (\eta_i \psi)}^2 \lesssim \norm{\SDir_\mg^2 (\eta_i\psi)}^2 + \norm{\psi}^2$
by \Rd what we have just calculated, and so \Bk on noting
that $\SDir_\mg^2 (\eta_i \psi) = \eta _i\SDir_\mg^2 \psi - 2 \conn[(\grad \eta_i)]\psi - (\Lap \eta_i) \psi$
by \eqref{Eq:ProdRuleDirac2}, \Rd where $\grad \eta_i = (\conn \eta_i)^\sharp = \mg(\conn \eta_i, \cdot)$, \Bk
we estimate 
\begin{align*}
\sum_i \norm{\conn^2(\eta_i \psi)}^2 
	&\lesssim \sum_i \int \eta_i \modulus{\SDir_\mg^2 \psi}^2\ d\mu 
		+ \int \sum_i \modulus{\conn \eta_i}^2 \modulus{\psi}^2\ d\mu\\
		&\qquad\qquad+ \int \sum_i \modulus{\conn^2 \eta_i}^2 \modulus{\psi}^2\ d\mu \\ 
	&\lesssim \norm{\SDir_\mg^2 \psi}^2 + \norm{\psi}^2.
\end{align*}
In Proposition \ref{Prop:NormCmp}, we have already shown that
$\norm{\conn \psi}^2 \lesssim \norm{\SDir_\mg \psi}^2 + \norm{\psi}^2$ 
and hence it suffices to note that  
$$\norm{\SDir_\mg \psi}^2 = \inprod{\SDir_\mg^2 \psi, \psi} 
	\leq \norm{\SDir_\mg^2 \psi} \norm{\psi}
	\lesssim \norm{\SDir_\mg^2 \psi}^2 + \norm{\psi}^2,$$
to complete the proof.
\end{proof}
\section{Reduction to quadratic estimates}
\label{Sec:Red}

\Rd 
The estimates in this section are operator theoretical in their nature and only 
make use of the structure \eqref{Def:Struc}
of the perturbation, along with the assumption that $\Dirp$  and $\Dirb$ 
are self-adjoint operators with domains contained in $\SobH{1}(\cV)$.
We will show how to reduce the estimate \Bk
of $f(\Dirp) - f(\Dirb)$ in Theorem \ref{Thm:Main} to
quadratic estimates. We will see in \S\ref{Sec:SFE}
that the latter type of estimates allow
us to prove the main theorem via harmonic analysis techniques.
Throughout this section, we assume the hypothesis
of Theorem \ref{Thm:Main}. 

\subsection{Perturbations of resolvents}

Since the operators $\Dirb$ and $\Dirp$
are both self-adjoint, they admit
a Borel functional calculus via the
spectral theorem as well as a bounded
holomorphic functional calculus as outlined
in \S\ref{Sec:FunC}.

For $t > 0$, let us define operators
$$ 
\Ppb_t = \frac{1}{\iden + t^2\Dirb^2},\  
\Pp_t = \frac{1}{\iden + t^2\Dirp^2},\ 
\Qqb_t = t\Dirb \Ppb_t,
\quad\text{and}\quad
\Qq_t = t\Dirp \Pp_t.$$
The fact that $\Dirb$ and $\Dirp$ are  self-adjoint 
gives
$$
\int_{0}^\infty \norm{\Qq_t u}^2\ \dtt \leq \frac{1}{2} \norm{u}^2
\quad\text{and}\quad
\int_{0}^\infty \norm{\Qqb_t u}^2\ \dtt \leq \frac{1}{2} \norm{u}^2,$$
and also
$$ \sup_{t} \norm{\Ppb_t},\ 
\sup_{t} \norm{\Pp_t},\ 
\sup_{t} \norm{\Qqb_t},\ 
\sup_{t} \norm{\Qq_t} \leq \frac{1}{2}.$$
Furthermore, we note that \Rd the operators
$\Ppb_t,\ \Ppb_t,\ \Qqb_t,\ \Qq_t$ are \Bk self-adjoint.

Moreover, let
$$ \psi(\zeta) = \frac{\zeta}{1 + \zeta^2}
\quad\text{and}\quad
\psi_t(\zeta) = \psi(t\zeta)$$
and note that $\Qqb_t = \psi_t(\Dirb)$ and 
$\Qq_t = \psi_t(\Dirp)$.
We establish some operator theoretic facts about
$\Qq_t$ and $\Qqb_t$ that will be of
use to us later.

Let 
$$
\Rr_t = \frac{1}{\iden + \imath t \Dirp} = -(\imath t)^{-1} \rs{\Dirp}(-(\imath t)^{-1})
\quad\text{and}\quad
\Rrb_t = \frac{1}{\iden + \imath t \Dirb} = -(\imath t)^{-1} \rs{\Dirb}(-(\imath t)^{-1}),$$
and note that
\begin{equation}
\label{Eqn:Rrt}
\Rr_t 
	= \frac{1}{\iden + \imath t\Dirp}
	= \frac{1}{\iden + \imath t\Dirp} 
		\frac{\iden - \imath t\Dirp}{\iden - \imath t\Dirp}
	= \frac{1}{\iden + t^2\Dirp^2} - \imath \frac{t\Dirp}{\iden + t^2 \Dirp^2}
	= \Pp_t - \imath \Qq_t.
\end{equation}
Similarly, $\Rrb_t = \Ppb_t - \imath \Qqb_t$. 

\begin{proposition}
\label{Prop:Paraprod}
The difference of the resolvents satisfies the formula:
$$\Rr_t - \Rrb_t = \Rr_t[\imath t(\Dirb - \Dirp)]\Rrb_t.$$
Moreover, 
\begin{equation*}
\Qq_t - \Qqb_t 
	= - \Pp_t[t(\Dirp - \Dirb)]\Ppb_t 
		- \Qq_t[t(\Dirp - \Dirb)]\Qqb_t
\end{equation*}
\end{proposition}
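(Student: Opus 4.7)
The plan is to derive both identities from the standard second resolvent identity, extracting the second from the first by separating real and imaginary parts in the decomposition \eqref{Eqn:Rrt}.

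First, I would establish the first identity by a direct application of the second resolvent formula $A^{-1} - B^{-1} = A^{-1}(B - A)B^{-1}$. Set $A = \iden + \imath t\Dirp$ and $B = \iden + \imath t\Dirb$. By hypothesis \ref{Hyp:Dom}, both $\Dirp$ and $\Dirb$ have common domain $\Sob{1,2}(\cV)$, so on that domain $B - A = \imath t(\Dirb - \Dirp)$ is well-defined. Moreover, since $\Dirb$ is self-adjoint, $\Rrb_t$ maps $\Lp{2}(\cV)$ into $\dom(\Dirb) = \Sob{1,2}(\cV) = \dom(\Dirp)$, so the composition $\Rr_t[\imath t(\Dirb - \Dirp)]\Rrb_t$ is a genuine bounded operator on $\Lp{2}(\cV)$. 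Applying the resolvent formula directly yields $\Rr_t - \Rrb_t = \Rr_t[\imath t(\Dirb - \Dirp)]\Rrb_t$.

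Next, I would extract the second identity. Using \eqref{Eqn:Rrt}, I substitute $\Rr_t = \Pp_t - \imath \Qq_t$ and $\Rrb_t = \Ppb_t - \imath \Qqb_t$ into both sides of the first identity. The left-hand side becomes $(\Pp_t - \Ppb_t) - \imath(\Qq_t - \Qqb_t)$. Multiplying out the right-hand side, with $X := t(\Dirb - \Dirp)$, gives
$$
(\Pp_t - \imath\Qq_t)(\imath X)(\Ppb_t - \imath\Qqb_t)
  = \bigl(\Pp_t X\Qqb_t + \Qq_t X\Ppb_t\bigr) + \imath\bigl(\Pp_t X\Ppb_t - \Qq_t X\Qqb_t\bigr).
$$
Equating the coefficients of $\imath$ and using $\Dirb - \Dirp = -(\Dirp - \Dirb)$ then rearranges to the claimed formula for $\Qq_t - \Qqb_t$; equating the real parts yields the companion identity for $\Pp_t - \Ppb_t$ as a bonus (which is expected to be useful in subsequent sections).

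The main, and really only, subtlety is verifying that all intermediate compositions of the unbounded operators $\Dirp - \Dirb$ with the bounded functional-calculus operators are legitimate. This follows from the functional calculus: both $\Pp_t$ and $\Qq_t$ (and likewise $\Ppb_t, \Qqb_t$) map $\Lp{2}(\cV)$ into $\dom(\Dirp) = \dom(\Dirb) = \Sob{1,2}(\cV)$, since $\Dirp\Pp_t = \Qq_t/t$ and $\Dirp\Qq_t = (t\Dirp^2/(\iden + t^2\Dirp^2))$ are bounded via the Borel functional calculus, and similarly for $\Dirb$. Once these mapping properties are in place, the derivation is a purely algebraic manipulation; no quadratic estimates or harmonic analysis are required at this stage.
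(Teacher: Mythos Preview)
Your proposal is correct and follows essentially the same route as the paper: derive the first identity from the second resolvent formula (the paper writes this out as $\Rr_t(1+\imath t\Dirb)\Rrb_t - \Rr_t(1+\imath t\Dirp)\Rrb_t$, which is the same thing), justify the composition via the common domain $\dom(\Dirp)=\dom(\Dirb)=\SobH{1}(\cV)$, and then substitute the decomposition \eqref{Eqn:Rrt} on both sides and read off the $\Qq_t-\Qqb_t$ part. Your additional remark that $\Ppb_t,\Qqb_t$ map into $\SobH{1}(\cV)$ via functional calculus is exactly the justification the paper uses implicitly when it asserts the compositions are bounded.
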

\begin{proof}
First, note that: 
$$
\Rr_t - \Rrb_t = \Rr_t(1 + \imath t \Dirb)\Rrb_t 
		- \Rr_t(1 + \imath t\Dirp)\Rrb_t.$$
Since by assumption
$\dom(\Dirp) = \dom(\Dirb) = \SobH{1}(\cV)$,
we have that 
$\ran(\Rr_t) = \dom(\Dirp)$ and hence, 
$(\iden + \imath t \Dirp)\Rrb_t \in \bddlf(\Hil)$.
Thus,
$$\Rr_t - \Rrb_t =
	\Rr_t[ (1 + \imath t\Dirb) - (1 + \imath t \Dirp)]\Rrb_t 
	= \Rr_t[ \imath t (\Dirb - \Dirp)]\Rrb_t.$$

Expanding $\Rr_t = \Pp_t - \imath \Qq_t$
as we noted in \eqref{Eqn:Rrt}, 
a straightforward calculation yields that
\begin{multline*}
(\Pp_t - \Ppb_t) - \imath(\Qq_t - \Qqb_t) 
	= \Rr_t - \Rrb_t = \Pp_t[t(\Dirb - \Dirp)]\Qqb_t + \Qq_t[t(\Dirb -\Dirp)]\Ppb_t \\
		+ \imath \dbrac{\Pp_t[t(\Dirb - \Dirp)]\Ppb_t + \Qq_t[t(\Dirb - \Dirp)]\Qqb_t},
\end{multline*} 
which shows the expression for $\Qq_t - \Qq_t$.
\end{proof}

In particular, we see that
\begin{equation}
\label{Eqn:PsiToPtQt}
\begin{aligned} 
&\norm{(\Qq_t - \Qqb_t)f}\\ 
	&\qquad\leq \norm{\Pp_t(tA_1\conn)\Ppb_t f } + \norm{\Pp_t(t\divv A_2)\Ppb_t f} + \norm{\Pp_t(tA_3)\Ppb_t f} \\
	&\qquad\qquad+\norm{\Qq_t(tA_1\conn)\Qqb_t f} + \norm{\Qq_t(t\divv A_2)\Qqb_t f} + \norm{\Qq_t(tA_3)\Qqb_t f}, 
\end{aligned}
\end{equation}

\begin{proposition}
\label{Prop:BenignTerm}
We obtain the estimates
$$\sup_{t \in (0,1]} \norm{\Qq_t - \Qqb_t} \lesssim \norm{A}_\infty, 
\quad
\sup_{t \in (0,1]} \norm{\Rr_t - \Rrb_t} \lesssim \norm{A}_\infty,$$
where the implicit constants depend on $\const(\cM,\cV,\Dirb,\Dirp)$.
\end{proposition}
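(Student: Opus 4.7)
My plan is to apply Proposition \ref{Prop:Paraprod} and substitute the structural decomposition $\Dirp - \Dirb = A_1\conn + \divv A_2 + A_3$ from \eqref{Def:Struc} into the identity $\Rr_t - \Rrb_t = \Rr_t[\imath t(\Dirb - \Dirp)]\Rrb_t$. This produces three terms, $\Rr_t(tA_1\conn)\Rrb_t$, $\Rr_t(t\divv A_2)\Rrb_t$, and $\Rr_t(tA_3)\Rrb_t$, which I will bound separately using uniform spectral estimates for $\Rr_t,\Rrb_t$ together with hypothesis \ref{Hyp:Dom}. The restriction $t \in (0,1]$ is used exactly to absorb isolated factors of $t$.

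The $A_3$ term is immediate: since $\snorm{\Rr_t},\snorm{\Rrb_t}\leq 1$, one has $\snorm{\Rr_t(tA_3)\Rrb_t}\leq t\norm{A_3}_\infty\leq \norm{A}_\infty$ for $t\leq 1$. For the $A_1\conn$ term, I would regroup as $\Rr_t A_1\cdot(t\conn\Rrb_t)$ and show that $t\conn\Rrb_t$ is uniformly bounded on $\Lp{2}(\cV)$: by hypothesis \ref{Hyp:Dom}, for any $f \in \Lp{2}(\cV)$,
\begin{equation*}
\snorm{t\conn\Rrb_t f} \lesssim \snorm{\Rrb_t f}_{\SobH{1}} \lesssim t\snorm{\Dirb\Rrb_t f} + t\snorm{\Rrb_t f},
\end{equation*}
and the spectral bounds $\sup_{\lambda\in\R}|t\lambda/(1+\imath t\lambda)|\leq 1$ and $\sup_{\lambda\in\R}|1/(1+\imath t\lambda)|\leq 1$ control both terms uniformly in $t\leq 1$. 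Hence $\snorm{\Rr_t(tA_1\conn)\Rrb_t}\lesssim \norm{A_1}_\infty$.

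The main obstacle is the $\divv A_2$ term, since $\divv A_2$ is not bounded on $\Lp{2}(\cV)$. I would overcome this by duality, using self-adjointness of $\Dirp$ which gives $\adj{\Rr_t} = (\iden - \imath t\Dirp)^{-1}$. Noting that $A_2\Rrb_t f \in \dom(\divv)$ and $\adj{\Rr_t}h \in \dom(\Dirp) = \SobH{1}(\cV)$, the defining property $\divv = -\adj{\conn}$ yields
\begin{equation*}
\modulus{\inprod{\Rr_t(t\divv A_2)\Rrb_t f,\ h}} = t\modulus{\inprod{A_2\Rrb_t f,\ \conn \adj{\Rr_t} h}} \leq \norm{A_2}_\infty \snorm{\Rrb_t f}\snorm{t\conn \adj{\Rr_t}h},
\end{equation*}
and the factor $\snorm{t\conn\adj{\Rr_t}}\lesssim 1$ follows from the same $\SobH{1}$/resolvent argument applied to $\Dirp$ in place of $\Dirb$. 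Summing the three bounds gives $\snorm{\Rr_t - \Rrb_t}\lesssim \norm{A}_\infty$.

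For $\snorm{\Qq_t - \Qqb_t}$, I would use the second identity of Proposition \ref{Prop:Paraprod}, which reduces the problem to six analogous terms with $\Rr_t,\Rrb_t$ replaced by the self-adjoint operators $\Pp_t,\Ppb_t,\Qq_t,\Qqb_t$. The same strategy applies verbatim once one verifies $\snorm{t\conn \Ppb_t},\snorm{t\conn\Qqb_t}\lesssim 1$; for $\Qqb_t$ this relies on the spectral identity $t\Dirb\Qqb_t = \iden - \Ppb_t$, which gives $\snorm{t\Dirb\Qqb_t}\leq 2$, and then hypothesis \ref{Hyp:Dom} produces the required bound. All implicit constants depend only on $\const(\cM,\cV,\Dirb,\Dirp)$, as claimed.
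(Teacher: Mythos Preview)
Your proof is correct and follows essentially the same approach as the paper: both use the resolvent/paraproduct identities from Proposition~\ref{Prop:Paraprod}, substitute the structural decomposition \eqref{Def:Struc}, and bound the three resulting terms via uniform resolvent estimates, hypothesis~\ref{Hyp:Dom} for the $A_1\conn$ term, and duality (i.e.\ $\divv=-\adj{\conn}$) for the $\divv A_2$ term. The only cosmetic difference is ordering (you do $\Rr_t-\Rrb_t$ first, the paper does the $\Pp_t,\Qq_t$ pieces first), and your displayed chain should read $\snorm{t\conn\Rrb_t f}\leq t\snorm{\Rrb_t f}_{\SobH{1}}\lesssim \snorm{t\Dirb\Rrb_t f}+t\snorm{\Rrb_t f}$ with the $t$ carried through the middle term.
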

\begin{proof}
First, we bound the terms with $\Pp_t$ and $\Ppb_t$.
Note that, 
$$\norm{\Pp_t(t A_1\conn)\Ppb_t}
	\leq (\sup_{t \in (0,1]} \norm{\Pp_t}) 
		\norm{A_1}_\infty \norm{t\conn \Ppb_t}.$$
Moreover, by \eqref{Def:DomConst}, 
$$
\norm{t\conn \Ppb_t} \leq C_{\Dir} ( \norm{t\Dirb \Ppb_t}  +\norm{t\Ppb_t})
	\leq \const (1 + t).$$
On combining this with the assumption that 
$\norm{A_1}_\infty \leq \norm{A}_\infty $, we obtain that
$\norm{\Pp_t(tA_1\conn)\Ppb_t} \leq \const \norm{A}_\infty (1 + t)$.

Next, we estimate 
$\norm{\Pp_t(t\divv A_2)\Ppb_t}$.
First, we note that, for $v \in \dom(\divv)$,
$$\norm{\Pp_t (t\divv)v} \Rd
	= \sup_{\norm{g}=1} \modulus{\inprod{\Pp_t(t\divv)v, g}}
	= \sup_{\norm{g}=1} \modulus{\inprod{v, t\adj{\divv} \Pp_t g}}
	\leq \sup_{\norm{g}=1} \norm{v}
		\norm{t \adj{\divv} \Pp_tg}.$$
Now, note that $\adj{\divv} = -\conn$ and 
on invoking \eqref{Def:DomConst},
\begin{equation*}
\norm{t \adj{\divv} \Pp_tg} 
	\leq \const (\norm{t \Dirp \Pp_t g} + \norm{t \Pp_t g})
	\leq \const (1 + t) \norm{g}.
\end{equation*}
Thus, $\norm{\Pp_t(t\divv)v} \leq 2 \const \norm{v}$
and since $\dom(\divv)$ is dense in $\Lp{2}(\cotanb\cM \tensor \cV)$,
we obtain that
$\Pp_t(t\divv)$ extends to a bounded operator, 
uniformly bounded in $t \in (0,1]$.
Thus,
$$
\norm{\Pp_t(t\divv A_2)\Ppb_t} 
	\leq \norm{\Pp_t(t\divv)} \Rd \norm{A_2}_\infty \Bk \norm{\Ppb_t} 
	\leq \const \norm{A}_\infty. $$
It is immediate that 
$\norm{\Pp_tA_3 \Ppb_t} \leq \norm{\Pp_t} \Rd \norm{A_3}_\infty \Bk \norm{\Ppb_t} \leq \norm{A}_\infty$.

Similar bounds for $\Qq_t$ and $\Qqb_t$
in place of $\Pp_t$ and $\Ppb_t$
follow by exactly the same arguments
noting that $\norm{t\conn \Qqb_t} \simeq \norm{\iden - \Ppb_t}$. 
This shows that $\sup_{t \in (0,1]} \norm{\Qq_t - \Qqb_t} \lesssim \norm{A}_\infty$.
To show  $\sup_{t \in (0,1]} \norm{\Rr_t - \Rrb_t} \lesssim \norm{A}_\infty$,
we note that it suffices to simply verify that the previous
argument holds for $\Rr_t$ and $\Rrb_t$ in place
of $\Pp_t$ and $\Ppb_t$ due to the formula
\Rd established in \Bk Proposition \ref{Prop:Paraprod}. 
\end{proof}

We note that a similar estimate of $P_t$ also hold, but we shall
not need that.

\subsection{First reduction}

Now, let $f \in \Hol^\infty(\OSec{\omega,\sigma})$,
for $\omega \in (0, \pi/2)$ and $\sigma \in (0, \infty)$.
We reduce estimating 
$\norm{f(\Dirp) - f(\Dirb)}$ to obtaining
an appropriate estimate for $\norm{\Qq_t - \Qqb_t}$.
To that end, we begin with the following lemma. 

\begin{lemma}
The following identities hold:
$$ \iden = \Pp_1 + 2 \int_{0}^1 \Qq_s^2\ \dtt[s] = \Ppb_1 + 2 \int_{0}^1 \Qqb_s^2\ \dtt[s],$$
where $\Pp_1 = (\iden + \Dirp^2)^{-1}$ and $\Ppb_1 = (\iden + \Dirb^2)^{-1}$.
\end{lemma}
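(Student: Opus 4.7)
My plan is to prove the identity by reducing it to a pointwise identity on the spectrum via the functional calculus for self-adjoint operators, and then verifying that scalar identity by a direct substitution. I will treat only the case of $\Dirp$; the argument for $\Dirb$ is identical by symmetry.

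Since $\Dirp$ is self-adjoint, the spectral theorem gives a Borel functional calculus on $\R$, and the operators $\Pp_s = (\iden + s^2 \Dirp^2)^{-1}$ and $\Qq_s = s\Dirp \Pp_s$ correspond to the bounded Borel symbols $p_s(\lambda) = (1+s^2\lambda^2)^{-1}$ and $q_s(\lambda) = s\lambda(1+s^2\lambda^2)^{-1}$. By the spectral theorem and Fubini (justified by the uniform bound $|q_s(\lambda)|^2 \le 1$ and the integrability in $s$ below), it suffices to prove the scalar identity
\begin{equation*}
1 = \frac{1}{1+\lambda^2} + 2\int_0^1 q_s(\lambda)^2\,\frac{ds}{s} \qquad (\lambda \in \R).
\end{equation*}

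The scalar identity is verified by direct computation: with the substitution $u = 1 + s^2\lambda^2$, $du = 2s\lambda^2\,ds$, one obtains
\begin{equation*}
2\int_0^1 \frac{s^2\lambda^2}{(1+s^2\lambda^2)^2}\,\frac{ds}{s}
= \int_1^{1+\lambda^2} \frac{du}{u^2} = 1 - \frac{1}{1+\lambda^2},
\end{equation*}
which rearranges to the desired identity (trivially true at $\lambda = 0$ as well).

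To lift this back to operators, fix $u \in \Hil$ and use the spectral measure $dE_\lambda$ of $\Dirp$. For each $u$, the integral $\int_0^1 \|\Qq_s u\|^2 \dtt[s] \le \tfrac{1}{2}\|u\|^2$ by the bound noted just before Proposition~4.1, so Fubini applies and
\begin{equation*}
\Big\langle \Big(\Pp_1 + 2\int_0^1 \Qq_s^2\,\dtt[s]\Big)u, u\Big\rangle
= \int_\R \Big(\tfrac{1}{1+\lambda^2} + 2\int_0^1 q_s(\lambda)^2\,\dtt[s]\Big)\,d\langle E_\lambda u, u\rangle
= \|u\|^2.
\end{equation*}
Since $\Pp_1$ and $\Qq_s$ are self-adjoint, the operator on the left is self-adjoint, and polarisation (or the standard fact that a bounded self-adjoint operator is determined by its quadratic form) yields the operator identity. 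The only mild subtlety is the interchange of integral and inner product, which is routine given the quadratic estimate above; no real obstacle arises.
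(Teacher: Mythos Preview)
Your proof is correct and follows essentially the same approach as the paper: both reduce via the functional calculus to the scalar identity $1 - (1+\lambda^2)^{-1} = 2\int_0^1 s\lambda^2(1+s^2\lambda^2)^{-2}\,ds$ and verify it by elementary calculus. The only cosmetic difference is that the paper evaluates the integral by recognising the antiderivative $\tfrac{d}{ds}\big(\tfrac{s^2}{1+s^2}\big)$ after the substitution $s \mapsto t\lambda$, whereas you use the substitution $u = 1+s^2\lambda^2$; your version is arguably a bit more careful in spelling out the Fubini justification and the polarisation step.
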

\begin{proof}
Note that,
 $$ \iden - \Ppb_1 = \iden - (\iden + \Dirb^2)^{-1} = \Dirb^2(\iden + \Dirb^2)^{-1}.$$  
Moreover,
$$\ddt{s}\cbrac{ \frac{s^2}{1 + s^2}} =  \frac{2s}{(1+s^2)^2}$$
and by setting $s = tz$, we have that
$$\int_{0}^1 \frac{(tz)^2}{(1 + (tz)^2)^{2}}\ \dtt 
= \int_{0}^z \frac{s^2}{(1+s^2)^2}\ \dtt[s] = \frac{1}{2}\ \frac{z^2}{1 + z^2}.$$
Thus, by the functional calculus we obtain that
$$\Dirb^2 (\iden + \Dirb^2)^{-1}u = 2 \int_{0}^1 \psi_t(\Dirb)^2u\ \dtt.$$
The calculation for $\Dirp^2(\iden + \Dirp^2)^{-1}$ is similar.
\end{proof}

With the aid of this lemma, we obtain
\begin{equation}
\label{Eq:FR1}
\begin{aligned}
f(\Dirp) - f(\Dirb) &= [\Pp_1 + (\iden - \Pp_1)]f(\Dirp)[\Pp_1 + (\iden - \Pp_1)] \\
	&\qquad\qquad- [\Ppb_1 + (\iden - \Ppb_1)]f(\Dirb)[\Ppb_1 + (\iden - \Ppb_1)] \\
	&= [(2\Pp_1 - \Pp_1^2)f(\Dirp) - (2\Ppb_1 - \Ppb_1^2)f(\Dirb)]  \\
	&\qquad\qquad+ 4\int_0^1 \int_0^1 [(\psi_s^2 f\psi_t^2)(\Dirp) - (\psi_s^2 f \psi_t^2)(\Dirb)]\ \dtt[s] \dtt.
\end{aligned}
\end{equation}

Consider the second term on the right. Using the
fact that the functional calculus is a homomorphism yields that
\begin{equation}
\label{Eq:FR2} 
\begin{aligned} 
(\psi_s^2 f \psi_t^2)(\Dirp) - (\psi_s^2 f \psi_t^2)(\Dirb)
	&= \psi_s(\Dirp)(\psi_s f \psi_t)(\Dirp)[\psi_t(\Dirp) - \psi_t(\Dirb)] \\ 
	&\quad\quad+ \psi_s(\Dirp)[(\psi_s f \psi_t)(\Dirp) - (\psi_s f\psi_t)(\Dirb)] \psi_t(\Dirb) \\
	&\quad\quad+ [\psi_s(\Dirp) - \psi_s(\Dirb)](\psi_s f\psi_t)(\Dirb) \psi_t(\Dirb).
\end{aligned} 
\end{equation}

Let $\eta(x) = \min\set{x, \frac{1}{x}} (1 + \modulus{\log\modulus{x}})$. Then, 
we have the following preliminary estimates for each of the three terms
appearing in \eqref{Eq:FR2}. 
\begin{lemma}
\label{Lem:Schur}
The following estimates hold:
\begin{multline*} 
\norm{(\psi_s f \psi_t)(\Dirp)} \lesssim \norm{f}_\infty \eta(s/t),
\quad
\norm{(\psi_s f \psi_t)(\Dirb)} \lesssim \norm{f}_\infty \eta(s/t),
\text{and}\quad \\
\norm{ (\psi_sf\psi_t)(\Dirp) - (\psi_s f\psi_t)(\Dirb)} \lesssim \norm{f}_\infty \norm{A}_\infty \eta(s/t),
\end{multline*}
where the implicit constants only depend on $\const(\cM,\cV,\Dirb, \Dirp)$.
\end{lemma}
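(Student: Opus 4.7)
The first two estimates are immediate from the spectral theorem. Since $\Dirp$ and $\Dirb$ are self-adjoint with spectra in $\R$, for $T \in \{\Dirp,\Dirb\}$ one has $\norm{(\psi_s f\psi_t)(T)} \leq \norm{f}_\infty \sup_{\lambda\in\R}|\psi_s(\lambda)\psi_t(\lambda)|$, and maximising $\lambda \mapsto st\lambda^2/[(1+s^2\lambda^2)(1+t^2\lambda^2)]$ at the critical point $\lambda = 1/\sqrt{st}$ yields $\sup_\R|\psi_s\psi_t| = r/(1+r)^2$ with $r = \min(s/t,t/s)$, which is bounded by $\min(s/t,t/s) \leq \eta(s/t)$.

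For the third estimate my plan is to pass to a Riesz--Dunford contour integral. I first treat the case $f \in \Psi(\OSec{\omega,\sigma})$; the general case $f \in \Hol^\infty(\OSec{\omega,\sigma})$ follows by approximating $f$ by a uniformly bounded sequence $f_n \in \Psi(\OSec{\omega,\sigma})$ converging pointwise to $f$, and then passing to the strong operator limit. Since $|\psi_s(\zeta)\psi_t(\zeta)| \lesssim |\zeta|^{-2}$ at infinity, the product $h := \psi_s f \psi_t$ lies in $\Psi(\OSec{\omega,\sigma})$, so
\[
h(\Dir) = \frac{1}{2\pi\imath}\oint_\gamma h(\zeta)\,\rs{\Dir}(\zeta)\,d\zeta
\]
is absolutely convergent on $\gamma = \partial\OSec{\omega',\sigma'}$ with $\omega' \in (0,\omega)$ and $\sigma' \in (0,\sigma)$. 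Subtracting the two representations, applying the resolvent identity $\rs{\Dirp}(\zeta) - \rs{\Dirb}(\zeta) = \rs{\Dirp}(\zeta)(\Dirp - \Dirb)\rs{\Dirb}(\zeta)$ and substituting the structural decomposition \eqref{Def:Struc} gives
\[
h(\Dirp) - h(\Dirb) = \frac{1}{2\pi\imath}\oint_\gamma h(\zeta)\,\rs{\Dirp}(\zeta)(A_1\conn + \divv A_2 + A_3)\rs{\Dirb}(\zeta)\,d\zeta.
\]
The domain equivalence \eqref{Def:DomConst} combined with $\Dirb\rs{\Dirb}(\zeta) = \zeta\rs{\Dirb}(\zeta) - \iden$ gives $\norm{\conn\rs{\Dirb}(\zeta)} \lesssim (1+|\zeta|)/|\im\zeta|$, and duality $\adj{\divv} = -\conn$ together with self-adjointness $\rs{\Dirp}(\zeta)^\ast = \rs{\Dirp}(\bar\zeta)$ yields the same bound for $\norm{\rs{\Dirp}(\zeta)\divv}$. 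Combined with $\norm{\rs{\Dir}(\zeta)} \leq 1/|\im\zeta|$ and $\norm{A_i}_\infty \leq \norm{A}_\infty$ these produce
\[
\norm{\rs{\Dirp}(\zeta)(\Dirp - \Dirb)\rs{\Dirb}(\zeta)} \lesssim \norm{A}_\infty\,(1+|\zeta|)/|\im\zeta|^2.
\]

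Substituting this into the contour integral reduces the third estimate to the scalar inequality
\[
\int_\gamma |\psi_s(\zeta)\psi_t(\zeta)|\,\frac{1+|\zeta|}{|\im\zeta|^2}\,|d\zeta| \lesssim \eta(s/t).
\]
On $\gamma$ one has $|\im\zeta| \simeq \max(\sigma',|\zeta|\sin\omega')$ and $|d\zeta| \simeq d|\zeta|$, so, up to constants depending only on $\omega'$ and $\sigma'$, the integral is controlled by a bounded contribution from $|\zeta| \lesssim 1$ plus $\int_0^\infty st\lambda/[(1+s^2\lambda^2)(1+t^2\lambda^2)]\,d\lambda$. For $s \leq t$ the substitution $u = t\lambda$ converts the latter into $r\int_0^\infty u/[(1+r^2u^2)(1+u^2)]\,du$ with $r = s/t \in (0,1]$, which an explicit partial-fractions calculation evaluates to $r\log(1/r)/(1-r^2)$; this is bounded by $\eta(s/t)$ uniformly in $r$, and the case $s \geq t$ follows by symmetry. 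The main technical point of the argument is this explicit scalar evaluation, where the logarithmic factor in $\eta(s/t)$ emerges precisely from the logarithmic divergence of $u/[(1+u^2)(1+r^2u^2)]$ as $u \to \infty$ before it is damped by the $1/(1+r^2u^2)$ factor.
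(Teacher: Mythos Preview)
Your approach is essentially the same as the paper's: both use the Riesz--Dunford integral for the third estimate, apply the resolvent identity, and reduce to a scalar bound of the form $\oint_\gamma |\psi_s\psi_t|\,w(\zeta)\,|d\zeta| \lesssim \eta(s/t)$ for an appropriate weight $w$. Your spectral-theorem argument for the first two bounds is cleaner than the paper's (which invokes Riesz--Dunford there as well), and your explicit partial-fractions evaluation of the scalar integral is more detailed than the paper's terse appeal to ``the decay of $\psi_sf\psi_t$''. The paper packages the resolvent-difference estimate as $\sup_{\zeta\in\gamma}|\zeta|\,\norm{\rs{\Dirp}(\zeta)-\rs{\Dirb}(\zeta)} \lesssim \norm{A}_\infty$ via Proposition~\ref{Prop:BenignTerm}, whereas you re-derive it inline with the explicit $(1+|\zeta|)/|\im\zeta|^2$ dependence; on $\gamma$ the two weights are comparable.

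There is one genuine slip. You write that the contour integral is ``controlled by a bounded contribution from $|\zeta|\lesssim 1$ plus $\int_0^\infty \dots$''. If ``bounded'' means $O(1)$ uniformly in $s,t$, this yields only $O(1)+\eta(s/t)$, which is not $\lesssim\eta(s/t)$ since $\eta(s/t)\to 0$ as $s/t\to 0$ or $\infty$; and the lemma is applied via a Schur test in Proposition~\ref{Prop:FirstRed}, where the smallness of $\eta$ off the diagonal is essential. The fix is immediate: on $\gamma\cap\{|\zeta|\lesssim 1\}$ one has $|\psi_s(\zeta)|\lesssim s|\zeta|$ (the poles $\pm i/s$ stay uniformly away from $\gamma$ once $\sigma'<1$ and $s\leq 1$), so this portion contributes $\lesssim st$, and for $s,t\in(0,1]$ one has $st\leq\min(s/t,t/s)\leq\eta(s/t)$. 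Equivalently, the same pointwise bound shows the integrand on all of $\gamma$ is $\lesssim st\lambda/[(1+s^2\lambda^2)(1+t^2\lambda^2)]$ with $\lambda=|\zeta|$, so your $\int_0^\infty$ already absorbs the small-$|\zeta|$ region and no separate additive term is needed.
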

\begin{proof}
The bound for the first two terms
follows directly from the norm estimate of the Riesz-Dunford
integral \eqref{Eq:RD}.
For the last estimate, we have 
that, after fixing an appropriate curve $\gamma$, 
\begin{align*}
&\norm{(\psi_sf\psi_t)(\Dirp) - (\psi_s f\psi_t)(\Dirb)} 
	\lesssim \oint_{\gamma} \norm{ (\psi_s f \psi_t)(\zeta) (\rs{\Dirp}(\zeta) - \rs{\Dirb}(\zeta))}\ \modulus{d\zeta} \\
	&\qquad\qquad\lesssim \norm{f}_\infty \eta(s/t) 
	\cbrac{\oint_{\gamma} \norm{\psi_s f \psi_t {\psi}(\zeta)}\ \frac{\modulus{d\zeta}}{\modulus{\zeta}} } 
		\sup_{\zeta \in \gamma} \cbrac{\norm{\rs{\Dirp}(\zeta) - \rs{\Dirb}(\zeta)}\ \modulus{\zeta}} \\
	&\qquad\qquad\lesssim \norm{f}_\infty \norm{A}_\infty \eta(s/t),
\end{align*}
where the penultimate inequality follows from the decay of $\psi_s f \psi_t$
and from Proposition \ref{Prop:BenignTerm}.
\end{proof}

\begin{proposition}
\label{Prop:FirstRed}
Suppose that
$$ \int_{0}^1 \norm{(\Qq_t - \Qqb_t)u}^2\ \dtt \leq C_0 \norm{A}_\infty^2 \norm{u}^2$$
for all $u \in \Lp{2}(\cV)$. 
Then,
$$ \norm{f(\Dirp) - f(\Dirb)} \lesssim \norm{A}_{\infty} \norm{f}_\infty,$$
where the implicit constant depends only on $\const(\cM,\cV,\Dirb,\Dirp)$
and $C_0$. 
\end{proposition}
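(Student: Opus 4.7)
My plan is to prove the dual bound $\modulus{\inprod{(f(\Dirp) - f(\Dirb))u, g}} \lesssim \norm{A}_\infty \norm{f}_\infty \norm{u}\norm{g}$ by combining the algebraic decompositions \eqref{Eq:FR1} and \eqref{Eq:FR2} with three analytic ingredients already at hand: the resolvent-difference estimate from Proposition \ref{Prop:BenignTerm}, the pointwise symbol bounds from Lemma \ref{Lem:Schur}, and the standard quadratic estimates $\int_0^1 \norm{\Qq_t u}^2\,\dtt \leq \tfrac{1}{2}\norm{u}^2$ (and analogously for $\Qqb_t$) that follow from self-adjointness, supplemented by the hypothesis $\int_0^1 \norm{(\Qq_t - \Qqb_t)u}^2\,\dtt \leq C_0 \norm{A}_\infty^2 \norm{u}^2$.

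The boundary summand $(2\Pp_1 - \Pp_1^2)f(\Dirp) - (2\Ppb_1 - \Ppb_1^2)f(\Dirb)$ in \eqref{Eq:FR1} is handled by the Riesz-Dunford calculus. Since the rational symbol $h(\zeta) = (1+2\zeta^2)(1+\zeta^2)^{-2}f(\zeta)$ decays like $\modulus{\zeta}^{-2}$, this operator difference equals the absolutely convergent integral
\[\frac{1}{2\pi\imath}\oint_\gamma h(\zeta)\bbrac{\rs{\Dirp}(\zeta) - \rs{\Dirb}(\zeta)}\,d\zeta.\]
The pointwise bound $\norm{\rs{\Dirp}(\zeta) - \rs{\Dirb}(\zeta)}\,\modulus{\zeta} \lesssim \norm{A}_\infty$ for $\zeta \in \gamma$ is the natural rescaling of Proposition \ref{Prop:BenignTerm}: its proof, based on the resolvent identity together with \ref{Hyp:Dom}, goes through unchanged with $\imath t$ replaced by $1/\zeta$. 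Combined with the $\modulus{\zeta}^{-2}$ decay of $h$, this gives a contribution bounded by $\norm{A}_\infty \norm{f}_\infty$.

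For the double integral in \eqref{Eq:FR1} I use the three-piece split \eqref{Eq:FR2}. The middle piece has the form $\Qq_s M_{s,t} \Qqb_t$ with $\norm{M_{s,t}} \lesssim \norm{f}_\infty \norm{A}_\infty \eta(s/t)$ by Lemma \ref{Lem:Schur}; using self-adjointness of $\Qq_s$ and Cauchy-Schwarz,
\[\modulus{\inprod{\Qq_s M_{s,t}\Qqb_t u, g}} \leq \norm{f}_\infty \norm{A}_\infty\,\eta(s/t)\, \norm{\Qqb_t u}\,\norm{\Qq_s g}.\]
Integrating against $\dtt\,\dtt[s]$ on $(0,1)^2$ and applying Schur's test — both of whose one-variable integrals are majorised by $\int_0^\infty \eta(u)\,du/u < \infty$ — reduces the bound to $\norm{f}_\infty \norm{A}_\infty$ times a product of $L^2(\dtt)$ and $L^2(\dtt[s])$ norms, each controlled by $\norm{u}$ or $\norm{g}$ via the standard quadratic estimates. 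The first and third pieces of \eqref{Eq:FR2} are handled identically, except that one factor of $\norm{\Qq_t u}$ or $\norm{\Qq_s g}$ is replaced by $\norm{(\Qq_t - \Qqb_t)u}$ or the analogous difference; the corresponding $L^2$ norm is then controlled by the hypothesis rather than by the standard quadratic estimate.

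The main technical point is the Schur integrability of the kernel $\eta(s/t)$. The logarithmic factor in $\eta$ is delicate but is compensated by the $\min$-factor, so $\int_0^\infty \eta(u)\,du/u$ is finite and Schur's test applies uniformly; without this, the off-diagonal contributions in \eqref{Eq:FR2} could not be absorbed and additional cancellation would have to be extracted. With Schur's test available, the remaining work is bookkeeping that matches each piece of the decomposition against either the hypothesis or a standard self-adjoint quadratic estimate.
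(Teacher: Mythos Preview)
Your proposal is correct and follows essentially the same route as the paper: the decomposition \eqref{Eq:FR1}--\eqref{Eq:FR2}, the Riesz--Dunford treatment of the boundary term via the resolvent identity and Proposition~\ref{Prop:BenignTerm}, and the Schur-kernel argument with $\eta(s/t)$ combined with the hypothesis and the self-adjoint quadratic estimates for $\Qq_t$, $\Qqb_t$. The only cosmetic difference is that the paper writes out the Cauchy--Schwarz/Schur step explicitly rather than invoking Schur's test by name, and for the third piece of \eqref{Eq:FR2} it tacitly uses self-adjointness of $\Qq_s-\Qqb_s$ to move the difference onto $g$, exactly as you indicate.
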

\begin{proof}
We \Rd appeal to \eqref{Eq:FR1} and \Bk first prove that
$$\norm{(2\Pp_1 - \Pp_1^2)f(\Dirp) - (2\Ppb_1 - \Ppb_1^2)f(\Dirb)}
	\lesssim \norm{f}_{\infty} \norm{A}_\infty.$$
To that end, define 
$$\phi(\zeta) = \cbrac{\frac{2}{1 + \zeta^2} - \frac{1}{(1 + \zeta^2)^2}}f(\zeta)$$
and note that $\phi \in \Psi(\OSec{\omega,\sigma})$.
Moreover, by the functional calculus, we have
$[(2\Pp_1 - \Pp_1^2)f(\Dirp) - (2\Ppb_1 - \Ppb_1^2)f(\Dirp)] = \phi(\Dirp) - \phi(\Dirb)$.
Then, for an appropriate chosen curve $\gamma$, 
\begin{align*}
\norm{\phi(\Dirp)u - \phi(\Dirb)u}
	&\lesssim \norm{f}_{\infty} \oint_{\gamma} \modulus{\phi(\zeta)} 
		\norm{\rs{\Dirp}(\zeta)(\Dirb - \Dirp)\rs{\Dirb}(\zeta)u}\ \modulus{d\zeta}  \\
	&\lesssim \norm{f}_{\infty} \norm{A}_\infty  \norm{u}\ 
	\cbrac{\oint_{\gamma}\modulus{\phi(\zeta)}}\ \frac{\modulus{d\zeta}}{\modulus{\zeta}}
	\lesssim \norm{f}_\infty \norm{A}_\infty \norm{u}
\end{align*}
where the first inequality  follows from Proposition \ref{Prop:BenignTerm}.

Now, to bound \Rd the second term of \eqref{Eq:FR1},\Bk we appeal to
\eqref{Eq:FR2}. As we have previously
noted, $\psi_t(\Dirb) = \Qqb_t$
and $\psi_t(\Dirp) = \Qq_t$,
and so, 
\begin{align*} 
&\norm{\psi_s(\Dirp)(\psi_s f \psi_t)(\Dirp)[\psi_t(\Dirp) - \psi_t(\Dirb)]} \\
	&\qquad\qquad= \sup_{\norm{u} = \norm{v} = 1} \modulus{\inprod{\psi_s(\Dirp)(\psi_s f \psi_t)(\Dirp)[\psi_t(\Dirp) - \psi_t(\Dirb)]u,v}} \\
	&\qquad\qquad= \sup_{\norm{u} = \norm{v} = 1} \modulus{\inprod{(\psi_s f \psi_t)(\Dirp) (\Qq_t - \Qqb_t)u, \Qq_s v}}.
\end{align*}
Fix $\norm{u} = \norm{v} = 1$, and we compute 
\begin{multline*}
\modulus{\inprod{(\psi_s f \psi_t)(\Dirp) (\Qq_t - \Qqb_t)u, \Qq_s v}}
	\lesssim \norm{\psi_s f \psi_t(\Dirp) (\Qq_t - \Qqb_t)u} \norm{\Qq_s v} \\
	\lesssim \norm{f}_\infty \eta(s/t) \norm{(\Qq_t - \Qqb_t)u} \norm{\Qq_s v}. 
\end{multline*}  
Thus, 
\begin{align*}
\int_{0}^1 \int_0^1 &\modulus{\inprod{(\psi_s f \psi_t)(\Dirp) (\Qq_t - \Qqb_t)u, \Qq_s v}}\ \dtt[s] \dtt \\
	&\lesssim \norm{f}_\infty \cbrac{ \int_0^1 \cbrac{\int_0^1 \eta(s/t) \norm{(\Qq_t - \Qqb_t)u}^2 \dtt[s]} \dtt}^{\frac{1}{2}} \times \\ 
	&\qquad\qquad\qquad\cbrac{\int_0^1 \int_0^1 \eta(s/t) \norm{\Qq_s v}^2\ \dtt[s] \dtt}^{\frac{1}{2}} \\
	&\lesssim \norm{f}_\infty \cbrac{\int_{0}^1 \norm{(\Qq_t - \Qqb_t)u}^2 \dtt}^{\frac{1}{2}}
		 \cbrac{\int_{0}^1 \norm{\Qq_s v}^2 \dtt[s]}^{\frac{1}{2}} \\
	&\lesssim \norm{f}_\infty \norm{A}_\infty \norm{u} \norm{v},
\end{align*}
\Rd where the last inequality follows via our hypothesis and the self-adjointness
of $\Dirp$. This bounds the first term of \eqref{Eq:FR2}.
For the second term, \Bk we note that by using duality
to compute the norm, we arrive at:
$$\modulus{\inprod{[(\psi_s f \psi_t)(\Dirp) - (\psi_s f \psi_t)(\Dirb)] \Qqb_t u, \Qq_s v}}
	\lesssim  \norm{A}_\infty \norm{f}_\infty \eta(s/t) \norm{\Qqb_t u} \norm{\Qq_s v},$$
where we have used Lemma \ref{Lem:Schur}. By a similar computation to the 
previous integral, we obtain that 
$$\int_0^1 \int_0^1 \modulus{\inprod{[(\psi_s f \psi_t)(\Dirp) - (\psi_s f \psi_t)(\Dirb)] \Qqb_t u, \Qq_s v}}\ \dtt[s] \dtt 
	\lesssim \norm{A}_\infty \norm{f}_\infty \norm{u} \norm{v}.$$

The last term in \eqref{Eq:FR2} is argued similar to the first term.
Combining these estimates together, we obtain that
$ \norm{f(\Dirp) - f(\Dirb)} \lesssim \norm{A}_{\infty} \norm{f}_\infty$
as claimed. 
\end{proof}

\subsection{Second reduction}

In this section,
we show that the
quadratic estimate 
$$
\int_0^1 \norm{(\Qq_t - \Qqb_t)u}^2\ \dtt \lesssim \norm{A}_{\infty}^2 \norm{u}^2$$ 
can be reduced to quadratic estimates of 
the form 
$$
\int_{0}^1 \norm{\QQ_t S\PP_t u}^2\ \dtt \lesssim \norm{A}_{\infty}^2 \norm{u}^2,$$
where the operator $\QQ_t$ is
an operator satisfying quadratic estimates,
where $\PP_t$ is either $\Pp_t$ or $\Ppb_t$, 
and $S$ is an appropriate bounded operator
with norm controlled by $\const(\cM,\cV,\Dirb, \Dirp)$.
Due to Proposition \ref{Prop:Paraprod}, \Rd
via the decomposition of the difference 
$\Dirb - \Dirp = A_1 \conn + \divv A_2 + A_3$,
it is clear how the  term $\norm{A}_\infty$ arise
in the expression as we note in the following:  \Bk 
\begin{equation}
\begin{aligned} 
&\cbrac{\int_{0}^1 \norm{(\Qq_t - \Qqb_t)f}^2\ \dtt}^{\frac{1}{2}}\\ 
	&\qquad\leq \cbrac{\int_0^1 \norm{\Pp_t tA_1\conn \Ppb_t f }^2\ \dtt}^{\frac{1}{2}}
		+ \cbrac{\int_0^1 \norm{\Pp_t t\divv A_2 \Ppb_t f}^2\ \dtt}^{\frac{1}{2}} \\
		&\qquad\qquad\qquad\qquad\qquad+ \cbrac{\int_0^1 \norm{\Pp_t tA_3 \Ppb_t f}^2\ \dtt}^{\frac{1}{2}} \\
		&\qquad\qquad+
		\cbrac{\int_0^1 \norm{\Qq_t tA_1\conn \Qqb_t f}^2\ \dtt}^{\frac{1}{2}}
\label{Eqn:PsiToPtQt2}
		+ \cbrac{\int_0^1 \norm{\Qq_t t\divv A_2 \Qqb_t f}^2\ \dtt}^{\frac{1}{2}}\\ 
		&\qquad\qquad\qquad\qquad\qquad+ \cbrac{\norm{\Qq_t tA_3 \Qqb_t f}^2\ \dtt}^{\frac{1}{2}}. 
\end{aligned}
\end{equation}

\Rd With this, we obtain the following. \Bk

\begin{proposition}
\label{Prop:FinalRed}
Suppose that 
\begin{align*}
&\int_{0}^1 \norm{\Qq_t A_1 \conn (\imath\iden + \Dir)^{-1} \Ppb_t f}^2\ \dtt 
	\leq C_1  \norm{A}_{\infty}^2 \norm{f}^2,
\ \text{and} \ \\
&\int_{0}^1 \norm{t\Pp_t\divv A_2 \Ppb_tf}^2\ \dtt 
	\leq C_2 \norm{A}_{\infty}^2 \norm{f}^2
\end{align*}
for all $u \in \Lp{2}(\cV)$.
Then, for $\omega \in (0, \pi/2)$ and $\sigma \in (0, \infty)$,
whenever $f \in \Hol^\infty(\OSec{\omega,\sigma})$,
we obtain that 
$$\norm{f(\Dirp) - f(\Dirb)} \lesssim \norm{f}_\infty \norm{A}_\infty$$
where the implicit constant
depends  on $C_1,\ C_2$ and $\const(\cM,\cV,\Dirb,\Dirp)$.
\end{proposition}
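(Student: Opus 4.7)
The plan is to invoke Proposition~\ref{Prop:FirstRed}, which reduces the desired bound $\norm{f(\Dirp) - f(\Dirb)} \lesssim \norm{f}_\infty \norm{A}_\infty$ to the single square function estimate
\[
\int_0^1 \norm{(\Qq_t - \Qqb_t)u}^2\,\dtt \;\lesssim\; \norm{A}_\infty^2 \norm{u}^2,
\]
and then to prove this by bounding separately each of the six summands on the right hand side of \eqref{Eqn:PsiToPtQt2}. Term~(II), namely $\int_0^1 \norm{t\Pp_t \divv A_2 \Ppb_t f}^2\,\dtt$, is precisely the second hypothesis.

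For term~(IV), $\int_0^1 \norm{\Qq_t t A_1 \conn \Qqb_t f}^2\,\dtt$, I set $S := \conn(\imath\iden + \Dirb)^{-1}$, which is bounded on $\Lp{2}(\cV)$ by \ref{Hyp:Dom} and the spectral theorem applied to $\Dirb$. Writing $t\conn \Qqb_t f = S\,t(\imath\iden + \Dirb)\Qqb_t f$ and using the functional-calculus identity $t\Dirb \Qqb_t = \iden - \Ppb_t$ yields the decomposition
\[
\Qq_t t A_1 \conn \Qqb_t f \;=\; \imath t\,\Qq_t A_1 S \Qqb_t f \;+\; \Qq_t A_1 S f \;-\; \Qq_t A_1 S \Ppb_t f.
\]
The first summand is absorbed by the extra factor of $t$ together with the uniform boundedness of $\Qq_t$ and the quadratic estimate $\int_0^1 \norm{\Qqb_t f}^2\,\dtt \lesssim \norm{f}^2$. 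The second is handled by applying $\int_0^1 \norm{\Qq_t v}^2\,\dtt \leq \tfrac12 \norm{v}^2$ to $v = A_1 S f$. The third is exactly the first hypothesis of the proposition.

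The remaining four terms~(I), (III), (V), (VI) are dispatched by softer arguments using the operator theory already developed. Term~(I) uses $\norm{t\conn \Ppb_t f}^2 \lesssim \norm{\Qqb_t f}^2 + t^2 \norm{f}^2$ together with the quadratic estimate for $\Qqb_t$. Terms~(III) and~(VI) follow from the extra factor of $t$, the uniform boundedness of $\Pp_t,\Qq_t,\Ppb_t,\Qqb_t$, and, for~(VI), the quadratic estimate for $\Qqb_t$. Term~(V) rests on the uniform boundedness of $\Qq_t t\divv\colon \dom(\divv) \to \Lp{2}(\cV)$: by self-adjointness of $\Qq_t$ and $\divv^\ast = -\conn$,
\[
\norm{\Qq_t t\divv v} \;=\; \sup_{\norm{g}=1} \bigl| \inprod{v,\,-t\conn \Qq_t g} \bigr| \;\lesssim\; \norm{v},
\]
where $\norm{t\conn \Qq_t g} \lesssim \norm{g}$ follows from $t\Dirp \Qq_t = \iden - \Pp_t$ and \ref{Hyp:Dom}. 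Applying this with $v = A_2 \Qqb_t f \in \dom(\divv)$ and invoking the quadratic estimate for $\Qqb_t$ closes the bound for~(V).

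I expect the main obstacle to be the algebraic rearrangement underlying term~(IV): the first hypothesis carries $\Ppb_t$ on the right, whereas the natural expression from \eqref{Eqn:PsiToPtQt2} has $\Qqb_t$. Introducing the singular-integral factor $S = \conn(\imath\iden + \Dirb)^{-1}$ and exploiting the identity $t(\imath\iden + \Dirb)\Qqb_t = \imath t \Qqb_t + (\iden - \Ppb_t)$ is precisely what converts a superficially divergent expression into three summands that either match the hypothesis or are absorbed by the basic quadratic estimates for $\Qq_t$ and $\Qqb_t$. All other bounds reduce to standard operator-theoretic properties of $\Pp_t,\Qq_t,\Ppb_t,\Qqb_t$ combined with the perturbation structure $\Dirp - \Dirb = A_1\conn + \divv A_2 + A_3$ from \eqref{Def:Struc}.
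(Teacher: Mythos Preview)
Your proposal is correct and follows essentially the same approach as the paper: reduce to Proposition~\ref{Prop:FirstRed}, then bound each of the six terms in \eqref{Eqn:PsiToPtQt2}, handling terms (I), (III), (V), (VI) by soft operator-theoretic arguments, term~(II) directly by the second hypothesis, and term~(IV) via the identity $t(\imath\iden+\Dirb)\Qqb_t = \imath t\Qqb_t + (\iden - \Ppb_t)$ with $S = \conn(\imath\iden+\Dirb)^{-1}$, matching the paper's decomposition exactly. The only cosmetic difference is that for the $\imath t\,\Qq_t A_1 S \Qqb_t f$ piece you cite both the factor of $t$ and the quadratic estimate for $\Qqb_t$, whereas the paper just uses $\norm{\Qqb_t}\lesssim 1$ and $\int_0^1 t\,dt<\infty$; either route is fine.
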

\begin{proof}
We demonstrate that each term to the right of \eqref{Eqn:PsiToPtQt2}
is bounded by  
$$\max\set{C_1, C_2} \norm{A}_{\infty}^2 $$
and apply  Proposition \ref{Prop:FirstRed}.
First note that,
$$\int_0^1 \norm{\Pp_t(tA_3)\Ppb_tf}^2\ \dtt 
	\leq  \norm{A}_{\infty}^2 \int_0^1 t^2 \norm{f}^2\ \dtt 
	\leq  \norm{A}_{\infty}^2 \norm{f}^2,$$
and by the same calculation with $\Qq_t$ and
$\Qqb_t$ in place of $\Pp_t$ and $\Ppb_t$,
$\int_0^1 \norm{\Qq_t(tA_3)\Qqb_tf}^2\ \dtt \leq  \norm{A}_{\infty}^2 \norm{f}^2.$

By \eqref{Def:DomConst}  
and using the quadratic estimates for $\Qqb_t$, 
\begin{align*}
\int_0^1 \norm{\Pp_t(t A_1 \conn) \Ppb_tf}^2\ \dtt
	&\leq \norm{A_1}^2_\infty \int_0^1 \norm{t \conn\Ppb_t f}^2\ \dtt \\
	&\leq 2 \const^2 \norm{A}_{\infty}^2 \int_0^1(\norm{t \Dirb\Ppb_t f}^2 + \norm{t\Ppb_t f}^2)\ \dtt  \\
	&\leq 2 \const^2 \norm{A}_{\infty}^2 \int_0^1 (\norm{\Qqb_t f}^2 + t^2 \norm{f}^2)\ \dtt 
	\leq  \const^2 \norm{A}_{\infty}^2 \norm{f}^2.
\end{align*}

Next, note that for $u \in \dom(\divv)$, 
\begin{multline*}
\norm{\Qq_t t\divv u} 
	= \sup_{\norm{g}=1} \inprod{\Qq_t t \divv u, g}
	\leq \sup_{\norm{g}=1} \norm{u} \norm{t \adj{\divv} \Qq_tg} \\
	\leq \const \norm{u} \sup_{\norm{g}=1} (\norm{t\Dirp \Qq_t g} + \norm{t \Qq_t g})
	\lesssim \const \norm{u}.
\end{multline*}
Therefore
$$\int_0^1 \norm{\Qq_t(t \divv A_2 ) \Qqb_tf}^2\ \dtt
	\leq \const^2 \norm{A_2}^2 \int_0^1 \norm{\Qqb_t f}^2\ \dtt 
	\leq \const^2  \norm{A}_{\infty}^2 \norm{f}^2.$$

The two remaining terms are then handled
via the hypothesis. The first term is immediate. 
For the second term, first we have that
\begin{align*}
&\cbrac{\int_0^1 \norm{\Qq_t (t A_1 \conn)\Qqb_t f}^2\ \dtt}^{\frac{1}{2}}
	= \cbrac{\int_0^1 \norm{\Qq_t A_1 \conn (\imath\iden + \Dirb)^{-1} (t (\imath\iden + \Dirb) \Qqb_t)f}^2\ \dtt}^{\frac{1}{2}} \\ 
	&\qquad\leq \cbrac{\int_0^1 \norm{\Qq_t A_1 \conn (\imath\iden + \Dirb)^{-1} f}^2\ \dtt}^{\frac{1}{2}} 
		+ \cbrac{\int_0^1 \norm{\Qq_t A_1 \conn (\imath\iden + \Dirb)^{-1} \Ppb_t f}^2\ \dtt}^{\frac{1}{2}} \\
		&\qquad\qquad\qquad+ \cbrac{\int_0^1 \norm{\Qq_t A_1 \conn (\imath\iden + \Dirb)^{-1}t\Qqb_tf}^2\ \dtt}^{\frac{1}{2}},
\end{align*}
since $t\Dir \Qqb_t = \iden - \Ppb_t$.
By hypothesis, 
$$\int_0^1 \norm{\Qq_t A_1 \conn (\imath\iden + \Dirb)^{-1} \Ppb_t f}^2\ \dtt \leq C_2 \norm{A}_{\infty}^2 \norm{f}^2,$$
and by the quadratic estimates for $\Qq_t$, \eqref{Def:DomConst} and
noting that   $\norm{\conn(\imath \iden + \Dirb)^{-1}u}   \lesssim   \norm{u}$,   
$$
\int_0^1 \norm{\Qq_t A_1 \conn (\imath\iden + \Dirb)^{-1} f}^2\ \dtt 
	\leq \norm{A_1}^2_\infty \norm{\conn (\imath\iden + \Dirb)^{-1}}^2 \norm{f}^2
	\lesssim   \norm{A}_{\infty}^2 \norm{f}^2.$$
For the last term,
$$\int_0^1 \norm{\Qq_t A_1 \conn (\imath\iden + \Dirb)^{-1}(t\Qqb_t)f}^2\ \dtt 
	\lesssim \int_{0}^1 \norm{A_1}_\infty^2 t^2 \norm{f}^2\ \dtt \leq  \norm{A}_{\infty}^2 \norm{f}^2.$$
This finishes the proof.
\end{proof}

We conclude this section by remarking that 
in typical applications,
as we will see in \S\ref{Sec:SFE},
the constants $C_1$ and $C_2$ themselves will
depend on $\const(\cM,\cV,\Dirb, \Dirp)$.
\section{Quadratic estimates}
\label{Sec:SFE}

In this section, 
we prove the quadratic estimates
in the hypothesis of Proposition \ref{Prop:FinalRed}.
We consider both quadratic estimates
 appearing as the hypothesis of this proposition 
 combined into  the general form   
\begin{equation}
\label{Eq:SFE}
\int_0^1 \norm{\QQ_tS\Ppb_t f}^2\ \dtt \lesssim \norm{A}_{\infty}^2 \norm{f}^2,
\end{equation}
where $S: \Lp{2}(\cV) \to \Lp{2}(\cW)$
and $\QQ_t: \Lp{2}(\cW) \to \Lp{2}(\cV)$, 
where $\cW$ is an auxiliary vector bundle
 and $\QQ_t$ is a family of operators with sufficient
decay. 

\Rd It is well known in harmonic analysis, going back to the counter example in 
\cite{Mc72} by the second author to the abstract Kato square root conjecture, 
that estimates of the form \eqref{Eq:SFE}, even for multipliers $S$, 
cannot be proved only using operator theory methods such as those in \S\ref{Sec:Red}. Instead 
one needs to apply harmonic analysis to exploit the differential structure of the operators and the space. 
It is here that we require the full list \ref{Hyp:First}-\ref{Hyp:Last} of assumptions. \Bk

The purpose of considering an abstract estimate of this
form is \Rd due to the fact that to  \Bk satisfy the hypothesis
of Proposition \ref{Prop:FinalRed}, we are required to prove two
different quadratic estimates with the choice of operators
$S = \iden$ for $\QQ_t = \Pp_t \divv A_2$ and $S = \conn(\imath \iden + \Dir)^{-1}$
for $\QQ_t = \Qq_t A_1$.
\Rd Therefore, 
in order to make the presentation clearer for the reader, 
we combine these two estimates
into a single estimate. 
Note that while it may seem that the first choice for $\QQ_t$ and $S$
is an easy estimate, the fact that the operator $\Ppb_t$
appears in the required quadratic estimate to the right of $\QQ_t$ 
precisely means that this estimate that cannot be handled
by operator theory methods alone. 

In what will follow, the key is to reduce the  estimate \eqref{Eq:SFE} \Bk
to a \emph{Carleson measure estimate}.
We will impose further restrictions on $S$ as required  in the analysis that will follow.

\subsection{Dyadic grids and  GBG frames}
\label{Sec:DyaGrid}

A central consequence of the growth
assumption \eqref{Def:Eloc} 
is that it  affords us with a  dyadic decomposition. 
This is illustrated in the following theorem.

\begin{theorem}[Existence of a truncated dyadic structure]
\label{Thm:Dya:Christ}
Suppose that $(\cM,\mg)$ satisfies \eqref{Def:Eloc}.
Then, there  exist countably  many index sets $I_k$,
a countable collection of open subsets
$\set{\Q[\alpha]^k \subset \cM: \alpha \in I_k,\ k \in \Na}$,
points $z_\alpha^k \in \Q[\alpha]^k$ (called the \emph{centre} of $\Q[\alpha]^k$), 
and constants $\delta \in (0,1)$, 
$a_0 > 0$, $\eta > 0$ and $C_1, C_2 < \infty$ satisfying:
\begin{enumerate}[(i)]
\item for all $k \in \Na$,
	$\mu(\cM \setminus \union_{\alpha} \Q[\alpha]^k) = 0$,
\item if $l \geq k$, then either $\Q[\beta]^l \subset \Q[\alpha]^k$
	or $\Q[\beta]^l \intersect \Q[\alpha]^k = \emptyset$,
\item for each $(k,\alpha)$ and each $l < k$
	there exists a unique $\beta$ such that
	$\Q[\alpha]^k \subset \Q[\beta]^l$,
\item $\diam \Q[\alpha]^k < C_1 \delta^k$, 
\item $ \Ball(z_\alpha^k, a_0 \delta^k) \subset \Q[\alpha]^k$,  
\item for all $k, \alpha$ and for all $t > 0$, 
	$\mu\set{x \in \Q[\alpha]^k: d(x, \cM\setminus\Q[\alpha]^k) \leq t \delta^k} \leq C_2 t^\eta \mu(\Q[\alpha]^k).$
\end{enumerate} 
\end{theorem}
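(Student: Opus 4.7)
The plan is to adapt Christ's dyadic cube construction to a setting where only local, rather than global, doubling is available. The key reduction is that \eqref{Def:Eloc} delivers a uniform doubling constant on balls of bounded radius: for $r \leq 1$ one has $\mu(\Ball(x, 2r)) \leq c\, 2^\kappa e^{2 c_E}\, \mu(\Ball(x, r))$ independently of $x \in \cM$. Since the cubes $\Q[\alpha]^k$ are required only at scales $\delta^k$ with $k \in \Na$, hence of diameter at most $C_1$, this local doubling is exactly what the classical construction demands; completeness of $(\cM, \met)$ ensures compactness of closed balls, from which countable maximal separated nets can be extracted.

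Concretely, I would fix $\delta \in (0, 1)$ sufficiently small (Christ's argument requires $\delta$ below a small absolute constant depending only on the local doubling exponent). For each $k \in \Na$, inductively choose a maximal $\delta^k$-separated subset $\set{z^k_\alpha}_{\alpha \in I_k} \subset \cM$, arranged so that $\set{z^{k+1}_\beta}$ refines $\set{z^k_\alpha}$: each $z^{k+1}_\beta$ is assigned, by nearest-neighbour with ties broken according to a fixed well-ordering, to a unique parent $z^k_{\alpha(\beta)}$. Define $\Q[\alpha]^k$ as the set of points whose ancestor chain at level $k$ is $z^k_\alpha$, or equivalently, as the union of all descendants of $z^k_\alpha$ across all finer scales. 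Properties (i)--(v) then follow essentially by construction: nesting (ii) and parentage (iii) are built into the refinement, covering (i) comes from maximality of the nets, and the diameter bound (iv) and interior ball (v) follow from a geometric summation of the scales $\sum_{j \geq k} \delta^j$, with $C_1$ and $a_0$ depending only on $\delta$.

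The principal obstacle is the small-boundary condition (vi). Here I would follow Christ's chaining argument: any point $x \in \Q[\alpha]^k$ with $\dist(x, \cM \setminus \Q[\alpha]^k) \leq t \delta^k$ must lie in some subcube $\Q[\gamma]^j$ at scale $j$ with $\delta^j \simeq t \delta^k$ which neighbours a subcube outside $\Q[\alpha]^k$. The number of such boundary subcubes at scale $j$, relative to all descendants of $\Q[\alpha]^k$ at that scale, is controlled by a covering argument using the local doubling property. Summing the resulting geometric series over $j$ yields a bound of the form $C_2 t^\eta \mu(\Q[\alpha]^k)$, where $\eta > 0$ is determined by the local doubling exponent. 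Since every ball appearing in this argument has radius bounded by $1$, the local doubling derived from \eqref{Def:Eloc} suffices throughout, and the proof goes through essentially as in Christ's original construction for spaces of homogeneous type.
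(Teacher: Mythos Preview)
Your sketch is correct and follows precisely the route the paper points to: the paper does not give its own proof of this theorem but simply cites Christ's original dyadic-cube construction for doubling metric measure spaces and Morris's generalisation to the locally doubling (i.e.\ \eqref{Def:Eloc}) setting. Your reduction --- extracting uniform doubling on balls of bounded radius from \eqref{Def:Eloc}, then running Christ's maximal-net construction at scales $k\in\Na$ only --- is exactly the content of Morris's adaptation, so there is nothing to compare.
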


This theorem was first proved by Christ  in \cite{Christ} for $k\in\In$ 
(i.e. untruncated) for doubling measure metric spaces. It was
generalised by Morris in \cite{Morris3} to 
our particular setting. 

 In what is to follow
we  couple this 
dyadic grid with the notion of GBG for the
vector bundle $(\cV,\mh)$.
We encourage the reader
to assume familiarity with the constants 
$C_1,\ a_0$ and $\delta$ from Theorem \ref{Thm:Dya:Christ}.
 We remark that terminology we define below 
first arose in the harmonic analysis of the Kato square
root problem on vector bundles in \cite{BMc}.

\Rd We define and note the following:
\begin{equation} 
\label{Eq:HConsts}
\begin{aligned} 
&\bullet \text{fix $\jscale \in \Na$ such that $C_1 \delta^{\jscale} \leq \brad/5$ where $\brad$ is from Definition \eqref{Def:GBG}},\\ 
&\bullet \text{let $\scale = \delta^J$ which we call the \emph{scale}},\\ 
&\bullet \text{whenever $j \geq \jscale$, $\DyQ^j$ denotes the set of cubes $\Q[\alpha]^j$},\\ 
&\bullet \text{define $\DyQ = \union_{j \geq \jscale} \DyQ^j$},\\
&\bullet \text{whenever  $t\leq \scale$, we define $\DyQ_t = \DyQ^j$ if $\delta^{j+1} < t \leq \delta^j$,}\\
&\bullet \text{the length of a cube $\Q \in \DyQ^j$ is $\len(\Q) = \delta^j$,}\\  
&\bullet \text{for any $\Q \in \DyQ^j$,
there exists a unique ancestor
cube $\ancester{Q} \in \DyQ^\jscale$}\\
&\quad \text{such that $Q \subset \ancester{Q}$, and the cube $\ancester{Q}$ is
called  the \emph{GBG cube} of $\Q$}.
\end{aligned}
\end{equation}
\Bk

The following notion allows us to couple 
the dyadic structure with the GBG condition
yielding ``good'' coordinates for $\cV$ 
that enable us to import tools from 
Euclidean harmonic analysis to the vector bundle setting.
In the following definition, for a cube $\Q = \Q[\alpha]^j \in \DyQ^j$,
we define $x_{\Q} = z_{\alpha}^j$
and call this the \emph{centre} of the cube.

\begin{definition}
We call the following system of GBG trivialisations
$$
\sC = \set{\psi: \Ball(x_{\Q}, \brad) \times \C^N \to \pi^{-1}_{\cV}(\Ball(x_{\Q},\brad)),\ \Q \in \DyQ^J}$$
the \emph{GBG coordinates}.
Moreover, we let 
$$\sC_{\jscale} = \set{\psi\rest{\Q}: \Q \times \C^N \to \pi^{-1}_\cV(\Q),\ \psi \in \sC}$$
which we call the \emph{dyadic GBG coordinates}. 
For an arbitrary cube $\Q \in \DyQ$, 
the GBG coordinates of $\Q$ are
the GBG coordinates of the GBG cube $\ancester{Q}$. 
\end{definition}

An important tool in harmonic analysis
is to be able to perform averages, which requires
a notion of integration. In a general vector bundle, 
this is not a well-defined notion under transformations.
However, by using the GBG structure, we define 
the notion of \emph{cube integration}, as a map 
$\Ball(x_{\ancester{\Q}},\brad) \times \DyQ \ni (x, \Q) \mapsto (\int_{\Q} \mdot )(x)$.
For $u \in \Lp[loc]{1}(\cV)$, and $y \in \Ball(x_{\ancester{\Q}},\brad)$ we write
$$
\cbrac{\int_{Q} u\ d\mu}(y) = \cbrac{\int_{\Q} u_i\ d\mu}\  e^i(y)$$
where $u = u_i e^i$ in the GBG coordinates of $\Q$.
Note that this integral is only defined in 
$\Ball(x_{\ancester{\Q}}, \brad)$.
We then define the \emph{cube average} $u_{\Q} \in \Lp{\infty}(\cV)$
of some $u \in \Lp[loc]{1}(\cV)$ as
as 
$$u_{\Q}(y) = \begin{cases} 
	\fint_{\Q} u\ d\mu &y \in \Ball(x_{\ancester{\Q}},\brad)\\
	0 	&y \not\in \Ball(x_{\ancester{\Q}},\brad).
	\end{cases}$$

Lastly, for each $t > 0$, we define the \emph{dyadic averaging operator}
$\Av_t: \Lp[loc]{1}(\cV) \to \Lp[loc]{1}(\cV)$ by
\begin{equation} 
\label{Eq:DyAv} 
\Av_tu(x) = \cbrac{\fint_{\Q} u\ d\mu}(x)
\end{equation} 
where $\Q \in \DyQ_t$ and $x \in \Q$. 
This defines $\Av_tu(x)$ for $x$-a.e. on $\cM$. 
We remark that this operator is well defined, and that
$\Av_tu(x)$ on each $\Q \in \DyQ_t$. Moreover, 
$\Av_t: \Lp{2}(\cV) \to \Lp{2}(\cV)$ is bounded
 uniformly for $t \leq \scale$ with the bound depending
on the constant $C$ arising in the GBG criterion.

\subsection{Harmonic analysis}
\label{Sec:HarmAnal}

Let us assume that $\cV$ and $\cW$ are two
vector bundles both satisfying the GBG condition
and on taking a minimum of the GBG radius of the two bundles, 
assume that $\cV$ and $\cW$ share the same GBG
radius. 
Let $\QQ_t: \Lp{2}(\cW) \to \Lp{2}(\cV)$ be a
family of operators uniformly bounded in  $t \in (0,1]$.
The $\QQ_t$ we consider will naturally 
contain the coefficients $A_i$ as a factor. 

On defining $\maxx{a} = \max\set{1,a}$, 
we assume that $\QQ_t$ satisfies
\emph{off-diagonal estimates}:
there exists $C_{\QQ} > 0$
such that, for each $M > 0$, there exists a constant $C_{\Delta, M} > 0$
satisfying:
\begin{equation}
\label{Def:OD}
\begin{aligned}
\norm{\ch{E} \QQ_t(\ch{F}u)}_{\Lp{2}(\cV)}
	\leq  C_{\Delta,M} \norm{A}_{\infty}^2 &\maxx{\frac{\met(E,F)}{t}}^{-M} \\ 
		&\qquad\exp\cbrac{-C_{\QQ} \frac{\met(E,F)}{t}} \norm{\ch{F}u}_{\Lp{2}(\cW)}  
\end{aligned}
\end{equation}
for every Borel set $E,\ F \subset \cM$
and $u \in \Lp{2}(\cW)$.
Moreover, we assume that $\QQ_t$ satisfies
quadratic estimates, by which we mean 
there exists $C_{\QQ}' > 0$ so that 
\begin{equation}
\label{Def:SFE} 
\int_{0}^1 \norm{\QQ_t f}^2\ \dtt \leq C_{\QQ}' \norm{A}_\infty^2 \norm{f}^2
\end{equation}
for all $f \in \Lp{2}(\cV)$. 

Recalling the constants $c_E$ and $\kappa$
appearing in \eqref{Def:Eloc},
Lemma 4.4 in \cite{Morris3} states that, 
whenever $M > \kappa$ and $m > c_E/t$,
we have 
\begin{equation} 
\label{Eq:CubeSum}
\sup_{\Q' \in \DyQ_t} \sum_{\Q \in \DyQ_t} \frac{\mu(\Q)}{\mu(\Q')} 
	\maxx{\frac{\met(\Q,\Q')}{t}}^{-M} \exp\cbrac{-m \frac{\met(\Q,\Q')}{t}} \lesssim 1.
\end{equation}

As a consequence, arguing exactly as in Lemma 5.3 in \cite{Morris3}, 
we obtain that $\QQ_t$ extends to a bounded operator
$
\QQ_t: \Lp{\infty}(\cW) \to \Lp[loc]{2}(\cV)$
with $c > 0$ such that 
\begin{equation}
\label{Eq:QQbd}
\norm{\QQ_t u}_{\Lp{2}(\Q;\cV)}^2 \leq c \norm{A}_{\infty}^2 \mu(\Q)\norm{u}^2_{\Lp{\infty}(\cW)},
\end{equation}
whenever $t \in (0,\hscale(\QQ)]$, where 
$$\hscale(\QQ) = \min\set{\scale, \maxx{2 \maxx{\delta/C_1}^{-1} c_E/ C_{\QQ}}^{-1}}$$
we call the \emph{harmonic analysis scale of $\QQ_t$}. 

In the harmonic analysis, 
\emph{constant functions} are often required to extract
\emph{principal parts} of operators.
Under the guise of the 
GBG coordinate system, 
we are able to define a notion of a
constant section, locally,  of $\cV$. 
Let $x \in \Q \in \DyQ$ and  $w \in \cV_x \cong \C^N$, and
write $w = w_i\ e^i(x)$ in the GBG
frame $\set{e^i(x)}$ associated to $\Q$.
We then define the \emph{constant extension} 
of $w$ by
\begin{equation} 
\label{Eq:ConstExt} 
w^{c}(y) =   
	\begin{cases} w_i\ e^i(y) &y \in \Ball(x_{\ancester{Q}},\brad)\\
			0 &y \not\in \Ball(x_{\ancester{Q}},\brad),
	\end{cases}
\end{equation}
and we note that $w^{c} \in \Lp{\infty}(\cV)$.

For $x \in \Q \in \DyQ$, and $w \in \cV_x$, 
with GBG constant extension $w^{c} \in \Lp{\infty}(\cV)$, 
we define the \emph{principal part} of $\QQ_t$
by
\begin{equation}
\label{Eq:GammaQ}
\Pri^{\QQ}_t(x)w = (\QQ_tw^{c})(x).
\end{equation} 
It is easy to see that the principal part is 
a well defined operator 
$\Pri^{\QQ}_t(x): \cW_x \to \cV_x$ for
almost-every $x \in \cM$.
For convenience, we often write $\Pri_t$ instead of $\Pri^{\QQ}_t$.

We note that as a
consequence of \eqref{Eq:QQbd} that 
\begin{equation}
\label{Eq:EtAtbdd}
\fint_{Q} \modulus{\Pri_t(x)}^2\ d\mu(x) \leq \norm{A}_{\infty}^2
\quad\text{and}\quad
\sup_{t \in (0,\hscale(\QQ)]} \norm{\Pri_t \Av_t} \lesssim \norm{A}_{\infty}.
\end{equation}
for all $t \in (0, \hscale(\QQ)]$.
 This can be seen by a similar
argument to that found in \cite{Morris3} or \cite{BMc}. 

With this notation in hand, we split 
the quadratic from \eqref{Eq:SFE}
as follows:
\begin{align}
\label{Eq:SFEBreak}
\int_0^1 \norm{\QQ_tS\Ppb_t f}^2\ \dtt
	&\lesssim  \int_0^1 \norm{(\QQ_t - \Pri_t\Av_t) S\Ppb_t f}^2\ \dtt \\
		&\qquad+ \int_0^1 \norm{\Pri_t\Av_tS(\iden - \Ppb_t) f}^2\ \dtt 
		+ \int_0^1 \norm{\Pri_t \Av_t S f}^2\ \dtt.\nonumber
\end{align}
We call the first term on the left of \eqref{Eq:SFEBreak} 
the \emph{principal part}, the second term 
the \emph{cancellation part}
and the last term the \emph{Carleson part}.

From here on, we let the standing assumptions
throughout the remainder of this section be
\ref{Hyp:First}-\ref{Hyp:Last}.

\subsection{The principal part term}

In this subsection, under some additional conditions
on $S$, we bound the principal part. 
The first thing we observe and require
is a \Poincare inequality that is bootstrapped
from the \Poincare inequality for
functions. 
\begin{lemma}[Dyadic \Poincare Lemma]
\label{Lem:DyaPoin}
There exists $C_P > 0$ such that
$$ \int_{\Ball} \modulus{u - u_{\Q}}^2\ d\mu
			\leq C_P r^\kappa \e^{c_E rt}
			(rt)^2 
			\int_{\Ball} \cbrac{\modulus{\conn u}^2 + \modulus{u}^2}\ d\mu$$
for $u \in \SobH{1}(\cV)$, for all balls $\Ball = \Ball(x_{\Q},rt)$ with $r \geq C_1/\delta$
 (with the constant $C_1$ and $\delta$ from Theorem \ref{Thm:Dya:Christ}) 
where $\Q \in \DyQ_t$ with $t \leq \scale$ \Rd (with $\DyQ_t$ and $\scale$ from 
\eqref{Eq:HConsts}). \Bk The constant $C_P$ depends on $\const(\cM,\cV,\Dirb, \Dirp)$. 
\end{lemma}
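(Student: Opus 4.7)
My plan is to first reduce the vector-valued estimate to a scalar Poincar\'e-type inequality by using the GBG trivialisation associated to $\ancester{\Q}$, and then bootstrap the local Poincar\'e inequality \eqref{Def:Ploc} to the ball $\Ball$ via a dyadic covering and chaining argument, extracting the factor $r^\kappa\e^{c_E rt}$ from the exponential volume growth \eqref{Def:Eloc}.

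For the reduction, write $u = u_i e^i$ in the GBG frame associated to $\ancester{\Q}$, which trivialises $\cV$ over $\Ball(x_{\ancester{\Q}}, \brad)$. Hypothesis \ref{Hyp:VecGBG} gives $\modulus{u - u_\Q}_\mh^2 \simeq \sum_i \modulus{u_i - \fint_\Q u_i}^2$ on this set, while the frame regularity bound $\modulus{\conn e_i} \leq C_{G,\cV}$ yields $\modulus{\nabla u_i}^2 \lesssim \modulus{\conn u}_\mh^2 + \modulus{u}_\mh^2$ componentwise. On the complement $\Ball \setminus \Ball(x_{\ancester{\Q}}, \brad)$, the definition of the cube average forces $u_\Q = 0$, so $\modulus{u - u_\Q}^2 = \modulus{u}^2$ is directly absorbed into the $\int_\Ball \modulus{u}^2\, d\mu$ term on the right. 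It therefore suffices to prove the scalar inequality
\begin{equation*}
\int_\Ball \modulus{f - f_\Q}^2\, d\mu \lesssim r^\kappa \e^{c_E rt} (rt)^2 \int_\Ball (\modulus{\nabla f}^2 + \modulus{f}^2)\, d\mu,
\end{equation*}
with $f_\Q = \fint_\Q f\, d\mu$ and $f \in \SobH{1}(\cM,\R)$.

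For this scalar inequality, I would cover $\Ball$ by dyadic cubes $\set{\Q_j} \subset \DyQ_t$ meeting $\Ball$. Using Theorem \ref{Thm:Dya:Christ}(v) to bound $\mu(\Q_j)$ from below together with \eqref{Def:Eloc}, the cardinality satisfies $\#\set{\Q_j} \lesssim r^\kappa \e^{c_E rt}$. Splitting
\begin{equation*}
\int_\Ball \modulus{f - f_\Q}^2\, d\mu \lesssim \sum_j \int_{\Q_j} \modulus{f - f_{\Q_j}}^2\, d\mu + \sum_j \mu(\Q_j)\modulus{f_{\Q_j} - f_\Q}^2,
\end{equation*}
the first sum is estimated by $\lesssim t^2 \int_\Ball (\modulus{\nabla f}^2 + \modulus{f}^2)\, d\mu$: each $\Q_j$ has radius $\lesssim t \leq 1$ (upon choosing $\jscale$ sufficiently large in \eqref{Eq:HConsts}), so \eqref{Def:Ploc} applies, and the bounded overlap of mild enlargements gives the sum.

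The second sum is controlled by a chaining argument: for each $\Q_j$, connect it to $\Q$ by a sequence of neighbouring cubes in $\DyQ_t$ lying in a controlled dilation of $\Ball$, of length $K_j \lesssim r$. Cauchy-Schwarz converts the difference $\modulus{f_{\Q_j} - f_\Q}^2$ into $K_j$ times a sum of pairwise differences, each bounded via the local Poincar\'e inequality on a common parent cube, producing a factor of $t^2$ per step. Telescoping along the chain gives $K_j t^2 \lesssim r t^2$; combining with the cube count and the uniform cube-summation estimate \eqref{Eq:CubeSum} then yields the full $r^\kappa \e^{c_E rt} (rt)^2$ bound. The main technical obstacle is the bookkeeping in this chaining: one must track multiplicities uniformly across different chains using \eqref{Eq:CubeSum} so as to obtain exactly the advertised power of $r$ and not a larger one, and ensure every cube in every chain lies in a controlled enlargement of $\Ball$ whose volume is still bounded by $r^\kappa \e^{c_E rt} \mu(\Ball(x_\Q, t))$.
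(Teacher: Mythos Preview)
Your reduction from the vector-valued estimate to a scalar one via the GBG frame is correct and is essentially how the argument in \cite{BMc} (Proposition~5.3), to which the paper defers, begins; the handling of $\Ball\setminus \Ball(x_{\ancester{\Q}},\brad)$ is also fine, since that set is nonempty only when $rt\gtrsim\brad$, in which case the prefactor $r^\kappa\e^{c_E rt}(rt)^2$ dominates a fixed constant and the crude bound $\int_{\Ball}\modulus{u-u_\Q}^2\lesssim \int_\Ball\modulus{u}^2 + \tfrac{\mu(\Ball)}{\mu(\Q)}\int_\Q\modulus{u}^2$ already suffices.

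Where your proposal diverges from the intended proof is in the scalar step: the chaining argument is unnecessary, and the bookkeeping difficulty you flag can be avoided entirely. Since $r\geq C_1/\delta$ forces $\Q\subset\Ball$, one simply writes
\[
\int_\Ball \modulus{f-f_\Q}^2\ d\mu
	\lesssim \int_\Ball\modulus{f-f_\Ball}^2\ d\mu
	+\mu(\Ball)\,\modulus{f_\Ball-f_\Q}^2
	\leq \Bigl(1+\tfrac{\mu(\Ball)}{\mu(\Q)}\Bigr)\int_\Ball\modulus{f-f_\Ball}^2\ d\mu,
\]
using $\modulus{f_\Ball-f_\Q}^2\leq\mu(\Q)^{-1}\int_\Q\modulus{f-f_\Ball}^2$. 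The volume ratio $\mu(\Ball)/\mu(\Q)\lesssim r^\kappa\e^{c_E rt}$ follows from \eqref{Def:Eloc} together with $\Ball(x_\Q,a_0 t)\subset\Q$ from Theorem~\ref{Thm:Dya:Christ}(v), and $\int_\Ball\modulus{f-f_\Ball}^2\lesssim (rt)^2\int_\Ball(\modulus{\nabla f}^2+\modulus{f}^2)$ is exactly \eqref{Def:Ploc} when $rt\leq 1$ (and is trivial when $rt>1$). This gives the claimed bound in two lines and keeps the right-hand integral over $\Ball$ itself, whereas your covering by $\DyQ_t$-cubes meeting $\Ball$ would a priori only control the right-hand side over a dilate of $\Ball$. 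Your chaining route is not wrong in principle, but it replaces a single application of \eqref{Def:Ploc} on $\Ball$ by many applications on small cubes plus a combinatorial argument whose multiplicities are genuinely delicate to track.
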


The proof of this lemma proceeds similar to the  proof of Proposition 5.3 in \cite{BMc}.

\begin{proposition}[Principal part]
\label{Prop:PrinPart}
Let $(\cW,\mh_{\cW},\conn^\cW)$ be
another vector bundle satisfying
$\Ck{0,1}$-GBG and suppose there exists $C_{G,\cW}$ such that
in each GBG frame $\set{e^i}$
for $\cW$, $\modulus{\conn^{\cW}{e^i}(x)} \leq C_{G,\cW}$
for almost-every $x$.
Let $\QQ_t:\Lp{2}(\cW) \to \Lp{2}(\cV)$ be a
family of operators uniformly bounded in $t \in (0, 1]$
satisfying  \eqref{Def:OD} and \eqref{Def:SFE},
and suppose $S:\Lp{2}(\cV) \to \Lp{2}(\cW)$ is a bounded operator
for which 
$$\norm{\conn^\cW S v} \leq C_S \norm{v}_{\SobH{1}}$$
for some $C_S > 0$ and $v \in \SobH{1}(\cV)$.
Then, whenever $u \in \Lp{2}(\cV)$,  
$$
\int_{0}^{\Rd t_1(\QQ) \Bk} \norm{(\QQ_t - \Pri_t\Av_t)S\Ppb_t u}^2\ \dtt 
	\lesssim \norm{A}_{\infty}^2 \norm{u}^2,$$
\Rd where $t_1(\QQ) = \min\set{\hscale(\QQ), C_{\QQ}/(11c_E)}$. \Bk 
The implicit constant depends on $C_{G,W},\ C_S$, $C_{\Delta, \kappa + 3}$ from 
\eqref{Def:OD}, $C_\QQ'$ from \eqref{Def:SFE} and $\const(\cM, \cV, \Dirb, \Dirp)$.
\end{proposition}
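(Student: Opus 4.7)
The strategy is to carry out the dyadic harmonic analysis from \S\ref{Sec:DyaGrid}, reducing the principal part estimate to the quadratic estimate for $\Qqb_t$ that is provided free of charge by the self-adjointness of $\Dirb$. Write $v_t := S \Ppb_t u$; the hypothesis on $S$ together with \eqref{Def:DomConst} ensures $v_t \in \SobH{1}(\cW)$. For each $t \in (0, t_1(\QQ)]$ and each $\Q \in \DyQ_t$, observe that on $\Q$
\[
(\QQ_t - \Pri_t \Av_t) v_t = \QQ_t\bigl(v_t - (\Av_t v_t|_\Q)^{c}\bigr),
\]
since $\Av_t v_t$ is constant on $\Q$ in the GBG frame of $\ancester{\Q}$ and its constant extension $(\Av_t v_t|_\Q)^{c}$, a well-defined element of $\Lp{\infty}(\cW)$ supported in $B_\Q := \Ball(x_{\ancester{\Q}}, \brad)$, satisfies $\QQ_t((\Av_t v_t|_\Q)^{c})(x) = \Pri_t(x) \Av_t v_t(x)$ for $x \in \Q$ by \eqref{Eq:GammaQ}.

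My plan is to split $g_\Q := v_t - (\Av_t v_t|_\Q)^{c}$ into a near part supported on $B_\Q$ and a far part equal to $v_t \ch{\cM \setminus B_\Q}$, and to further partition the near part into dyadic annuli $\Ball(x_\Q, r 2^{j} t)$ about $x_\Q$ with $r = C_1/\delta$. On each annulus, Lemma \ref{Lem:DyaPoin} yields
\[
\|g_\Q\|_{\Lp{2}(\Ball(x_\Q, r 2^{j} t))} \lesssim r 2^{j} t \, \|v_t\|_{\SobH{1}(\Ball(x_\Q, r 2^{j} t))},
\]
while the off-diagonal estimate \eqref{Def:OD} weights the corresponding contribution to $\ch{\Q} \QQ_t g_\Q$ by $\maxx{r 2^{j}}^{-M} \exp(-C_{\QQ} r 2^{j})$, which is geometrically summable in $j$. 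The far part contributes a weight of order $\exp(-C_{\QQ} \brad / t)$, which is absorbable since $t \leq t_1(\QQ) \leq C_{\QQ}/(11 c_E)$ is precisely the range in which the off-diagonal decay dominates the exponential volume growth from \eqref{Def:Eloc}; this is what makes the double sum over cubes summable via \eqref{Eq:CubeSum}. Summing the squared pointwise estimate over $\Q \in \DyQ_t$ yields
\[
\|(\QQ_t - \Pri_t \Av_t) v_t\|^{2} \lesssim \norm{A}_{\infty}^{2}\, t^{2} \bigl(\|v_t\|^{2} + \|\conn^{\cW} v_t\|^{2}\bigr) + \norm{A}_{\infty}^{2}\, e^{-c \brad / t} \|v_t\|^{2}.
\]

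To finish, I invoke the hypothesis $\|\conn^{\cW} S w\| \leq C_S \|w\|_{\SobH{1}}$ with $w = \Ppb_t u$, combined with \eqref{Def:DomConst} and the identity $t \Dirb \Ppb_t = \Qqb_t$, to obtain
\[
\|\conn^{\cW} v_t\| \leq C_S \|\Ppb_t u\|_{\SobH{1}} \lesssim \|u\| + t^{-1} \|\Qqb_t u\|.
\]
The crucial cancellation is that the factor $t^{2}$ from \Poincare exactly absorbs the $t^{-2}$ coming from $\Dirb \Ppb_t$, giving
\[
\|(\QQ_t - \Pri_t \Av_t) v_t\|^{2} \lesssim \norm{A}_{\infty}^{2} \bigl(t^{2} \|u\|^{2} + \|\Qqb_t u\|^{2} + e^{-c \brad / t} \|u\|^{2}\bigr).
\]
Integrating against $\dtt$ on $(0, t_1(\QQ)]$ bounds the first and third terms trivially and the second by $\int_{0}^{\infty} \|\Qqb_t u\|^{2}\, \dtt \leq \tfrac{1}{2}\|u\|^{2}$, which holds by self-adjointness of $\Dirb$.

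The main obstacle is the tension between the fixed GBG scale $\brad$ (governing the support of the constant extension) and the variable dyadic scale $t$ (governing the \Poincare correction): the near/far split must be carried out so that the many dyadic annuli out to radius $\brad$ are weighted against the off-diagonal decay in such a way that the resulting double sum over cubes is controlled by the cube-sum lemma \eqref{Eq:CubeSum}. This is precisely what forces the restriction to $t \leq t_1(\QQ)$ rather than merely $t \leq \hscale(\QQ)$, and is the step where all of the hypotheses \ref{Hyp:First}--\ref{Hyp:Last} (volume growth, GBG regularity of $\cV$ and $\cW$, \Poincare inequality, and the domain identity \eqref{Def:DomConst}) must be brought to bear simultaneously.
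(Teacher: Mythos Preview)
Your strategy is correct and matches the paper's proof. Two technical points where the paper streamlines things: first, the near/far split is unnecessary, since the Dyadic \Poincare Lemma~\ref{Lem:DyaPoin} is stated for balls $B(x_\Q,rt)$ of arbitrary radius (the prefactor $r^\kappa e^{c_E rt}$ that you omitted absorbs the region where the cube average vanishes outside the GBG ball), so the paper simply runs the annular decomposition $C_j(\Q)=2^{j+1}B_\Q\setminus 2^{j}B_\Q$ with $B_\Q=B(x_\Q,(C_1/\delta)t)$ out to $j=\infty$; second, the sum over $\Q\in\DyQ_t$ of the localised norms $\|v\|_{\SobH{1}(2^{j+1}B_\Q)}^2$ is handled not via \eqref{Eq:CubeSum} but via an overlap lemma for dilated disjoint balls (Lemma~8.3 in \cite{BMc}), which bounds $\sum_\Q\ch{2^{j+1}B_\Q}$ pointwise by $2^{\kappa(j+1)}\exp(4c_E C_1 2^{j+1}t/\delta)$. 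Your far-part term $e^{-c\brad/t}\|v_t\|^2$ as written cannot be summed directly over the infinitely many cubes $\Q$, so you would need an annular decomposition there as well---at which point the split becomes moot and you recover exactly the paper's argument.
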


\begin{remark}
We allow for an auxiliary vector bundle
$\cW$ in this proposition since, 
in the proof of Theorem \ref{Thm:Main}, we
are required to invoke this with different
choices for $\cW$. We will see later that
the constants $C_S$, $C_{G,\cW}$, $C_{\Delta,\kappa+3}$
and $C_{\QQ}'$ are themselves dependent on $\const(\cM,\cV,\Dirb,\Dirp)$.
\end{remark}

\begin{proof}
The proof proceeds similar to Proposition 8.4 in \cite{BMc},
by replacing their $\QQ_t^B$ with our $\QQ_t$.

Set $v = S\Ppb_t u$.
First, note from \eqref{Eq:DyAv} that $\Av_t v(x) = v_{\Q}(x)$ for $x \in \Q$, and so
$$ \norm{(\QQ_t - \Pri_t\Av_t)v}^2 = \sum_{\Q \in \DyQ_t} \norm{\QQ_t(v - v_{\Q})}_{\Lp{2}(\Q)}^2.$$

Letting $\Ball_{\Q} = \Ball(x_{\Q} ,C_1/\delta t)$, 
$C_j(\Q) = 2^{j+1}\Ball_{\Q}\setminus 2^j\Ball_{Q}$,
and on invoking \eqref{Def:OD} for $\QQ_t$ and for some 
$M > 0$ to  be chosen later, we obtain that
\begin{equation}
\begin{aligned}
\label{Eq:Pri0}
&\int_{\Q} \modulus{\QQ_t(v - v_{\Q})}^2\ d\mu \\
	&\qquad\lesssim \norm{A}_{\infty}^2 
	\cbrac{\sum_{j=0}^\infty \maxx{\frac{\met(\Q,C_j(\Q))}{t}}^{-M}
		\exp \cbrac{ - C_{\QQ} \frac{\met(\Q, C_j(\Q))}{t}}
		\norm{v - v_{\Q}}_{\Lp{2}(C_j(\Q))}}^{2}.
\end{aligned}
\end{equation}

\Rd By (4.1) in \cite{Morris3}, we have \Bk 
$$2^j\frac{C_1}{\delta}t \leq \met(x_{\Q}, C_j(\Q)) \leq \met(\Q, C_j(\Q)) + \diam\Q$$
\Rd and therefore \Bk 
\begin{equation}
\label{Eq:Pri2}
\begin{aligned}
\maxx{\frac{\met(\Q,C_j(\Q))}{t} }^{-M} &\lesssim 2^{-M(j+1)}\quad\text{and}, \\ 
\exp \cbrac{ - C_{\QQ} \frac{\met(\Q, C_j(\Q))}{t}} 
	&\lesssim \exp\cbrac{-\frac{C_{\QQ}C_1}{4\delta}\ 2^{j+1}}
\end{aligned}
\end{equation}
for all $j \geq 0$.
Thus, by Cauchy-Schwartz inequality applied to \eqref{Eq:Pri0}, we
obtain that 
\begin{equation}
\begin{aligned}
\label{Eq:Pri1}
&\int_{\Q} \modulus{\QQ_t(v - v_{\Q})}^2\ d\mu \\
	&\qquad\lesssim \norm{A}_{\infty}^2 
	\sum_{j=0}^\infty 2^{-M(j+1)}
		\exp \cbrac{ - C_{\QQ} \frac{C_1}{2\delta} 2^{j+1}}
		\int_{C_j(\Q)} \modulus{v - v_{\Q}}^2\ d\mu.
\end{aligned}
\end{equation}

On observing that $C_j(\Q) \subset 2^{j+1}\Ball_{\Q}$, 
$v \in \SobH{1}(\cW)$, $S:\SobH{1}(\cV) \to \SobH{1}(\cW)$,
and since $(\cW, \mh_{\cW},\conn^\cW)$ has $\Ck{0,1}$-GBG
with $\modulus{\conn^{\cW}e^i} \leq C_G$ almost-everywhere,
we apply Lemma \ref{Lem:DyaPoin} to obtain
\begin{equation}
\label{Eq:Pri3}
\begin{aligned}
\int_{C_j(\Q)} &\modulus{v - v_{\Q}}^2\ d\mu \\
	&\lesssim \cbrac{\frac{C_1}{\delta}}^{\kappa + 2}  \exp\cbrac{\frac{c_E C_1}{\delta} 2^{j+1} t}
		 2^{2(j+1)} t^2 \int_{2^{j+1}\Ball_{\Q}} (\modulus{\conn^\cW v}^2 + \modulus{v}^2)\ d\mu.
\end{aligned}
\end{equation}

To estimate the term 
$$\int_{2^{j+1}\Ball_{\Q}} (\modulus{\conn^\cW v}^2 + \modulus{v}^2)\ d\mu
	 =  \int \ch{ 2^{j+1}\Ball_{\Q}} (\modulus{\conn^\cW v}^2 + \modulus{v}^2)\ d\mu,$$
we use Lemma 8.3 in \cite{BMc}, which states 
that whenever $r > 0$ and $\set{\Ball_j = \Ball(x_j, r)}$
is a disjoint collection of balls, then for 
every $\eta \geq 1$,
$$\sum_{j} \ch{\eta\Ball_j} \lesssim \eta^\kappa \e^{4c_E\eta\kappa},$$
where the implicit constant depends on \eqref{Def:Eloc}. 
We apply this on setting $r = a_0 t$ and $\eta = 2^{j+1}C_1/(\delta a_0)$
so that $\set{\Ball(x_{\Q}, a_0 t)}$ is disjoint
to obtain the bound
\begin{equation}
\label{Eq:Pri4} 
\ch{ 2^{j+1}\Ball_{\Q}} \lesssim 2^{\kappa(j+1)}\exp\cbrac{\frac{4c_EC_1}{\delta} 2^{j+1}t}.
\end{equation}

On combining estimates \eqref{Eq:Pri2}, \eqref{Eq:Pri3}
and \eqref{Eq:Pri4} with \eqref{Eq:Pri1}, 
\begin{equation}
\begin{aligned}
\sum_{\Q \in \DyQ_t} \int_{\Q} &\modulus{\QQ_t(v - v_{\Q})}^2\ d\mu \\
	&\lesssim \norm{A}_{\infty}^2
	\sum_{j = 0}^\infty 2^{-(M - \kappa - 2)(j + 1)} 
	\exp\cbrac{-\frac{C_1}{2\delta}\cbrac{C_{\QQ} - 10c_E t }\ 2^{j+1}} \\
	&\qquad\qquad\qquad t^2 (\norm{\conn^\cW v}^2 + \norm{v}^2).
\end{aligned}
\end{equation}

This sum converges by choosing $M > \kappa + 2$
and for $t \leq  \frac{C_{\QQ}}{11c_E}.$
Then, on  setting $t_1(\QQ) = \min\set{\hscale(\QQ), C_{\QQ}/(11c_E)}$,
and  recalling that $v = S\Ppb_t u$,
\begin{align*}
\int_{0}^{t_1(\QQ)} &\norm{(\QQ_t - \Pri_t\Av_t)S\Ppb_t u}^2\ \dtt  \\
	&\lesssim \norm{A}_{\infty}^2 \int_{0}^{t_1(\QQ)} t^2 \norm{\conn^\cW S\Ppb_t u}^2\ \dtt  
		+ \norm{A}_{\infty}^2 \int_{0}^{t_1(\QQ)} t^2\norm{S\Ppb_tu}^2\ \dtt \\
	&\lesssim  \norm{A}_{\infty}^2 \int_{0}^{t_1(\QQ)} (t^2 \norm{\conn^\cV \Ppb_t u}^2 + \norm{\Ppb_t u}^2)\ \dtt 
		+ \norm{A}_{\infty}^2 \int_{0}^{t_1(\QQ)} t^2\norm{S\Ppb_t u}^2\ \dtt \\
	&\lesssim \norm{A}_{\infty}^2 \norm{u}^2 
		+ \norm{A}_{\infty}^2 \int_{0}^{t_1(\QQ)} t^2 \norm{\Dirb \Ppb_t u}^2\ \dtt  \\
	&\lesssim \norm{A}_{\infty}^2 \norm{u}^2,
\end{align*}
where the second inequality follows
from the assumption $\norm{\conn^\cW S w}^2 \lesssim \norm{\conn^\cV w}^2 + \norm{w}^2$, 
the third inequality from the boundedness of $S: \Lp{2}(\cV) \to \Lp{2}(\cW)$
and \eqref{Def:DomConst}, and the last inequality 
from the fact that $t\Dirb\Ppb_t = \Qqb_t$ satisfies
quadratic estimates.
\end{proof}

\subsection{The cancellation term}

In this subsection, we estimate the cancellation term.
First, we observe the following. 

\begin{lemma}
\label{Lem:DirCan} 
On each dyadic cube $\Q$, 
and for each $u \in \SobH{1}(\cV)$ with $\spt u \subset \Q$,
we have that
$$ \modulus{\int_{\Q} \Dirb u\ d\mu} \lesssim \mu(Q)^{\frac{1}{2}} \norm{u}.$$
The implicit constant depends on $\const(\cM,\cV,\Dirb,\Dirp)$.
\end{lemma}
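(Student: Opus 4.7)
The plan is to use the essential self-adjointness of $\Dirb$ to shift the derivative off $u$ onto a test section built from a GBG frame vector, and to exploit that $u$ is supported in $\Q$ so that the localising cutoff never contributes to the final bound. Since $\Q \in \DyQ$ has $\len(\Q) \leq \scale = \delta^{\jscale}$, the ancestor cube $\ancester{\Q}$, and hence $\Q$ itself, sits well inside the GBG ball $\Ball(x_{\ancester{\Q}}, \brad)$. Let $\set{e_l}_{l=1}^N$ be the associated $\Ck{0,1}$ GBG frame. By the definition of cube integration, the finite-dimensionality of the fibre \ref{Hyp:FinDim}, and the GBG comparability \ref{Hyp:VecGBG} of the bundle metric with the Euclidean inner product in the frame, it suffices to show
$$\modulus{\inprod{\Dirb u, e_l}_{\Lp{2}(\Q)}} \lesssim \mu(\Q)^{\frac{1}{2}}\norm{u}, \qquad l = 1, \dots, N.$$

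Choose a Lipschitz cutoff $\chi : \cM \to [0,1]$ that is identically $1$ on a neighbourhood of $\close{\Q}$ (for instance on $\Ball(x_{\ancester{\Q}}, \brad/2)$), supported in $\Ball(x_{\ancester{\Q}}, \brad)$, and with $\Lipp \chi \lesssim \brad^{-1}$. Since $e_l$ is Lipschitz on this ball, $\chi e_l$ extends by zero to a Lipschitz, compactly supported section of $\cV$, hence lies in $\SobH{1}(\cV) = \dom(\Dirb)$ by \ref{Hyp:Dom}. Because $\Dirb$ is local of order one and $\spt u \subset \Q$, we have $\spt(\Dirb u) \subset \close{\Q} \subset \set{\chi = 1}$, so
$$\inprod{\Dirb u, e_l}_{\Lp{2}(\Q)} = \inprod{\Dirb u, \chi e_l}_{\Lp{2}(\cV)}.$$
Self-adjointness of $\Dirb$ then yields $\inprod{\Dirb u, \chi e_l} = \inprod{u, \Dirb(\chi e_l)}$.

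The heart of the argument is the following observation. We have
$$\Dirb(\chi e_l) = \chi \Dirb e_l + \comm{\Dirb, \chi} e_l,$$
and by \ref{Hyp:PDO} the commutator $\comm{\Dirb, \chi}$ is a pointwise multiplication operator dominated by $c_{\Dirb}\Lipp\chi$; in particular it vanishes on $\Q$, where $\chi$ is constant. Since $\spt u \subset \Q$, only the values of $\Dirb(\chi e_l)$ on $\Q$ enter the pairing, so $\inprod{u, \Dirb(\chi e_l)} = \inprod{u, \Dirb e_l}_{\Lp{2}(\Q)}$. Combining \ref{Hyp:DirGBG}, which supplies $\modulus{\Dirb e_l} \leq C_{D,\cV}$ almost everywhere on the GBG ball, with the Cauchy--Schwarz inequality gives
$$\modulus{\inprod{u, \Dirb e_l}_{\Lp{2}(\Q)}} \leq \norm{u}_{\Lp{2}(\Q)}\, C_{D,\cV}\, \mu(\Q)^{\frac{1}{2}} \lesssim \mu(\Q)^{\frac{1}{2}} \norm{u},$$
and summing in $l$ completes the proof. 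There is no real obstacle here: the proof is a direct application of the setup, and the key insight is that $\Lipp \chi$ --- the one place where a bad $\len(\Q)^{-1}$ factor could have appeared --- is invisible in the final bound, precisely because $u$ is supported where $\chi$ is constant.
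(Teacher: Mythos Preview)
Your approach is the same as the paper's: pair $\Dirb u$ against a compactly supported Lipschitz test section built from the GBG frame, use self-adjointness to transfer $\Dirb$, and bound the result pointwise via \ref{Hyp:DirGBG}. The self-adjointness step, the cutoff trick, and the observation that the commutator vanishes on $\Q$ are all exactly as in the paper.

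There is one small but genuine gap in your reduction step. Pairing with $e_l$ in the $\Lp{2}(\Q)$ inner product gives $\int_{\Q} \mh(\Dirb u, e_l)\,d\mu = \int_{\Q} (\Dirb u)_m \mh^{ml}\,d\mu$, which is \emph{not} the coefficient $\int_{\Q} (\Dirb u)_l\,d\mu$ defining the cube integral, because $\mh^{ml}$ is not constant on $\Q$. GBG comparability \ref{Hyp:VecGBG} bounds $\mh^{ml}$ pointwise but does not let you pull it outside the integral, so your bound on $\modulus{\inprod{\Dirb u, e_l}}$ does not immediately control $\modulus{\int_{\Q}(\Dirb u)_l\,d\mu}$. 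The paper fixes this by pairing instead with the metric-dual section $\mh_{lk}e^k$, so that $\mh(\Dirb u, \mh_{lk}e^k) = (\Dirb u)_l$ pointwise. The rest of your argument then goes through, except that you must bound $\modulus{\Dirb(\mh_{lk}e^k)}$ on $\Q$: writing $\Dirb(\mh_{lk}e^k) = \mh_{lk}\Dirb e^k + \comm{\Dirb,\mh_{lk}}e^k$, the first term is controlled by \ref{Hyp:DirGBG} and the second by \ref{Hyp:PDO} together with the Lipschitz bound $\modulus{\conn \mh_{lk}}\lesssim 1$, which the paper derives from \ref{Hyp:VecGBG} and the almost-everywhere compatibility of $\conn$ with $\mh$. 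With this amendment your proof is complete and coincides with the paper's.
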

\begin{proof}
Let $u = u_i e^i$ inside the GBG frame
associated to $\Q$, and let $\set{v_j}$ be
the GBG frame for $\tanb\cM$.
Then, from \eqref{Def:DirForm},
we write in this frame 
$$\Dirb u = (\alpha^{jk}_l \conn[v_j] u_k + u_i \omega_l^i)\ e^l,$$
and for a bounded Lipschitz $\eta:\cM \to \R$, 
\begin{multline*}
\comm{\eta, \Dirb}u
	= \eta \Dir u - \Dir(\eta u) \\
	= \eta(\alpha^{jk}_l \conn[v_j] u_k + u_i \omega_l^i)\ e^l
		- \alpha^{jk}_l  \conn[v_j](\eta u_k) + \eta u_i \omega_l^i)\ e^l 
	= \alpha^{jk}_l (\conn[v_j]\eta) u_k\ e^l,
\end{multline*}
almost-everywhere inside the GBG frame.
By choosing $\eta$ appropriately, 
i.e., $\conn \eta = v_j$, 
$$\sum_{j,k,l} \modulus{\alpha^{jk}_l}^2 \lesssim \dim(\cV).$$
Moreover, from \ref{Hyp:DirGBG}, we deduce the bound
$$\sum_k \modulus{\omega^i_k}^2 \simeq \modulus{\omega^i_k e^k}^2 = \modulus{\Dirb e^i}^2 \leq c_{\Dirb,\cV}.$$

Before we proceed, we note that
the assumption $\modulus{\conn e_i} \leq C_{G,\cV}$
implies that $\modulus{\conn[\nu_j] \mh_{ij}} \lesssim 1$
almost-everywhere since we assume that $\mh$ and $\conn$ are compatible
almost-everywhere.
The implicit constant here depends only on
of $C_{G,\cV}$ and $C_{\cV}$.

Now, let $\adj{\mh} = \mh_{ij} e^i \tensor e^j$ denote the induced metric for $\adj{\cV}$ from 
$\mh = \mh^{ij} e_i \tensor e_j$, where $e^i(e_j) = \delta_{ij}$.
Now, note that we can write a section $f \in \Lp[loc]{1}(\cV)$
in $\set{e^i}$ as 
$f = f_i e^i  = \mh(f, \mh_{ik}\ e^i)\ e^k$, and 
on choosing $\psi$ to be a Lipschitz function supported
inside the trivialisation for the frame $\set{e_i}$, 
with $\psi \equiv 1$ on $\Q$ we compute
using the fact that $u = 0$ on $\spt \conn \psi$
\begin{multline*}
\int_{\Q} \Dirb u
	= \int_{\Q} \mh(\Dirb u, \psi \mh_{ik}\ e^i)\ e^k 
 	= \int_{\cM} \mh(\Dirb u, \psi \mh_{ik}\ e^i)\ e^k 
	= \int_{\cM} \mh(u, \Dirb(\psi \mh_{ik}\ e^i))\ e^k \\
	= \int_{\Q} \mh(u, \Dirb(\mh_{ik}\ e^i))\ e^k 
	= \int_{\Q} \mh(u, (\alpha^{jm}_l \conn[v_j] \mh_{mk} + \mh_{ik} \omega^i_l)\ e^l)\ e^k.
\end{multline*}
Therefore,
\begin{multline*}
\modulus{\int_{\Q} \Dirb u}
	\lesssim \int_{\Q} \modulus{u}\ \sum_{k,m,l}\modulus{\alpha^{jm}_l \conn[v_j] \mh_{mk}}
		+\int_{\Q} \modulus{u}\ \sum_{k,m} \modulus{(\mh_{ik} \omega^i_m)\ e^m)} \\
	\lesssim \int_{\Q} \modulus{u}
	= \int_{\cM} \ch{\Q} \modulus{u}
	\leq \cbrac{\int_{\cM} \ch{\Q}^2}^{\frac{1}{2}} \cbrac{\int_{\cM} \modulus{u}^2}^{\frac{1}{2}}
	= \mu(\Q)^{\frac{1}{2}} \norm{u},
\end{multline*}
using the proved bounds on $\alpha^{jk}_l$ and $\omega^i_j$
and bounds on $\conn[v_j] \mh_{kl}$ and $\mh_{kl}$ from 
\ref{Hyp:BasGBGL}.
\end{proof}

\begin{lemma}
\label{Lem:D_iCan}
On each dyadic cube $\Q$, each $u \in \SobH{1}(\cV)$
and $v \in \dom(\divv)$ with $\spt v,\ \spt u \subset \Q$,
we have that
$$ \modulus{\int_{\Q} \conn u\ d\mu} \lesssim \mu(\Q)^{\frac{1}{2}} \norm{u}
\quad \text{and}\quad
\modulus{\int_{Q} \divv v\ d\mu} \lesssim \mu(\Q)^{\frac{1}{2}} \norm{v}.
$$
The implicit constants depend on $\const(\cM,\cV,\Dirb,\Dirp)$.
\end{lemma}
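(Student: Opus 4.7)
The plan is to mimic the proof of \lemref{Lem:DirCan} by using the duality $\divv=-\adj{\close{\conn_2}}$ (see \S\ref{Sec:GBG}) in place of the symmetry of $\Dirb$. The principle in both halves of the lemma is the same: interpret the frame-wise integral over $\Q$ as a dual pairing against a Lipschitz ``test frame section'' supported inside the GBG trivialisation of the ancestor cube $\ancester{\Q}$, integrate by parts, use that the derivative-of-cutoff terms annihilate against $\spt u$ or $\spt v \subset \Q$, and then exploit the GBG bounds $\modulus{\conn e_j} \leq C_{G,\cV}$ from \ref{Hyp:VecGBG}, $\modulus{\conn \nu_j} \leq C_{G,\cotanb\cM}$ from \ref{Hyp:TanGBG}, and $\modulus{\conn \mh_{ij}}, \modulus{\conn \mg_{ij}} \lesssim 1$ (from almost-everywhere metric compatibility, as in the proof of \lemref{Lem:DirCan}).

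For the divergence estimate, work in the GBG frame $\set{e^i}$ for $\cV$ associated to $\Q$, with dual frame representation $\adj{\mh} = \mh_{ij}\ e^i \tensor e^j$. Choose a Lipschitz cutoff $\psi:\cM \to [0,1]$ with $\psi \equiv 1$ on $\Q$ and $\spt \psi$ contained in the trivialisation. Since $\spt v \subset \Q$, write
\[
\int_{\Q} \divv v = \int_{\cM} \mh(\divv v, \psi \mh_{ik}\ e^i)\ d\mu \cdot e^k.
\]
Applying $\divv = -\adj{\conn}$ (valid since $v \in \dom(\divv)$ and $\psi \mh_{ik}\ e^i \in \SobH{1}(\cV)$) gives
\[
\int_{\Q} \divv v = -\int_{\cM} \mh_\tensor\bigl(v, \conn(\psi \mh_{ik}\ e^i)\bigr)\ d\mu \cdot e^k.
\]
Now $\conn(\psi \mh_{ik} e^i) = (\extd \psi) \tensor \mh_{ik} e^i + \psi\bigl((\conn \mh_{ik}) \tensor e^i + \mh_{ik} \conn e^i\bigr)$; the first summand vanishes on $\spt v$, while the remaining terms are pointwise bounded by the constants from \ref{Hyp:VecGBG} and metric compatibility. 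Hence $\modulus{\int_{\Q} \divv v} \lesssim \int_{\Q}\modulus{v}\ d\mu \leq \mu(\Q)^{1/2}\norm{v}$ by Cauchy--Schwarz.

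For the gradient estimate, the same argument runs with the roles of $\conn$ and $\divv$ reversed, now working in the product GBG frame $\set{\nu^\alpha \tensor e^i}$ for $\cotanb\cM \tensor \cV$ with the induced dual metric. One pairs $\conn u$ against the Lipschitz test section $\psi\ \mh_{ij}\ \mg_{\alpha\beta}\ \nu^\beta \tensor e^j$ (with $\psi \equiv 1$ on $\Q$), integrates by parts to move $\conn$ onto the test section as a $\divv$, and notes that $\divv(\psi\ \mh_{ij}\ \mg_{\alpha\beta}\ \nu^\beta \tensor e^j)$ is a sum of an $(\extd\psi)$-term (which vanishes on $\spt u \subset \Q$) plus terms in which only the connection falls on $\nu^\beta$, $e^j$, $\mh_{ij}$ or $\mg_{\alpha\beta}$. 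Each such term is $\Lp{\infty}$-bounded by \ref{Hyp:TanGBG}, \ref{Hyp:VecGBG} and metric compatibility, so a final application of Cauchy--Schwarz yields $\modulus{\int_{\Q} \conn u} \lesssim \mu(\Q)^{1/2}\norm{u}$.

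The main obstacle is, as in \lemref{Lem:DirCan}, not conceptual but notational: one must keep track of the distinct GBG frames for $\cV$ and $\cotanb\cM$, the associated induced frames and dual metrics on $\cotanb\cM \tensor \cV$, and verify that the justification of integration by parts (which relies on the density of $\Ck[c]{\infty}(\cotanb\cM \tensor \cV) \subset \dom(\divv)$ recalled in \S\ref{Sec:GBG}) goes through once we have localised to the trivialisation via $\psi$. No new analytic input beyond that already used in \lemref{Lem:DirCan} is required.
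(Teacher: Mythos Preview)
Your proposal is correct and follows essentially the same approach as the paper: the paper does not give a self-contained proof but simply states that the argument ``is proved very similar to \lemref{Lem:DirCan}'' and refers the reader to the proof of Theorem~6.2 in \cite{BMc} for details, which is precisely the duality-plus-cutoff argument you outline. Your identification of the key inputs (the adjoint relation $\divv=-\adj{\close{\conn_2}}$ in place of the self-adjointness of $\Dirb$, the GBG regularity bounds \ref{Hyp:TanGBG}--\ref{Hyp:VecGBG}, and almost-everywhere metric compatibility) matches what is needed, and your closing remark about tracking the product frame on $\cotanb\cM\tensor\cV$ and checking that the Lipschitz test section lies in $\dom(\divv)$ is exactly the bookkeeping the paper leaves to the cited reference.
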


This lemma is proved very similar to 
Lemma \ref{Lem:DirCan}.
For a comprehensive outline of the
proof, we consult the reader to 
the proof of Theorem 6.2 in \cite{BMc}. 
Although the metrics in \cite{BMc} are assumed to be smooth, 
it is easy to verify that \Rd our assumption of \Bk $\Ck{0,1}$
regularity of the metric suffices in their proof. 
 
The following is a generalisation of a key estimate 
in \cite{AHLMcT}. 
 
\begin{lemma}[Cancellation lemma]
\label{Lem:Can}
Let $\Upsilon$
be either one of $\Dirb$,  $\Dirp$, $\conn$, or $\divv$.
Then,
$$
\modulus{\fint_{\Q} \Upsilon u\ d\mu}^2 \lesssim 
	\frac{1}{\len(\Q)^\eta} 
		\cbrac{\fint_{\Q} \modulus{u}^2\ d\mu}^{\frac{\eta}{2}}
		\cbrac{\fint_{\Q} \modulus{\Upsilon u}^2}^{1 - \frac{\eta}{2}}
		+ \fint_{\Q} \modulus{u}^2,$$
for all $u \in \dom(\Upsilon)$, $\Q \in \DyQ$, $t \in (0,\scale]$, 
\Rd where $\eta$ is the parameter from Theorem \ref{Thm:Dya:Christ}
and $\len(\Q)$ and $\scale$ are from \eqref{Eq:HConsts}. \Bk 
\end{lemma}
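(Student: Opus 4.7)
The plan is to combine a single-parameter cutoff-plus-integration-by-parts argument, in the style of \cite{AHLMcT}, with the thin-boundary regularity (vi) of the dyadic grid in Theorem \ref{Thm:Dya:Christ} and the end-point cancellation identities just obtained in Lemmas \ref{Lem:DirCan} and \ref{Lem:D_iCan}. The interpolation exponent $\eta/2$ on the right-hand side will emerge from optimising over the thickness of a boundary layer that gets removed, while the additive $\fint_{\Q}|u|^{2}$ contribution will come from the pointwise-bounded zero-order part of $\Upsilon$ acting on the GBG frame.

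I would first fix a parameter $\tau\in(0,c)$, with $c$ small enough that $\Q_{\tau}=\{x\in\Q:\dist(x,\cM\setminus\Q)>\tau\len(\Q)\}$ still contains the inner ball $\Ball(z_{\Q},\tfrac{1}{2}a_{0}\len(\Q))$ guaranteed by Theorem \ref{Thm:Dya:Christ}(v), and build a Lipschitz cutoff $\chi_{\tau}$ with $\chi_{\tau}\equiv 1$ on $\Q_{2\tau}$, $\spt\chi_{\tau}\subset\Q$, and $\Lipp\chi_{\tau}\lesssim 1/(\tau\len(\Q))$. I then decompose
\[
\int_{\Q}\Upsilon u\,d\mu
=\int_{\cM}\chi_{\tau}\,\Upsilon u\,d\mu
+\int_{\Q\setminus\Q_{2\tau}}(1-\chi_{\tau})\,\Upsilon u\,d\mu.
\]
The second integral is bounded by Cauchy--Schwarz and the thin-layer estimate $\mu(\Q\setminus\Q_{2\tau})\lesssim\tau^{\eta}\mu(\Q)$, yielding $\tau^{\eta/2}\mu(\Q)^{1/2}\|\Upsilon u\|_{\Lp{2}(\Q)}$. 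For the first integral I would repeat the GBG-frame pairing used in Lemmas \ref{Lem:DirCan} and \ref{Lem:D_iCan}, this time with $\chi_{\tau}\mh_{ik}e^{i}$ as test section in place of $\psi\mh_{ik}e^{i}$. Moving $\Upsilon$ onto the test section by the formal $\Lp{2}$ adjoint splits it into (a) a zero-order part $\chi_{\tau}\Upsilon(\mh_{ik}e^{i})$ which is pointwise bounded on $\Q$ by \ref{Hyp:DirGBG}, \ref{Hyp:BasGBGL}, and the $\Ck{0,1}$ regularity of $\mh$; and (b) a commutator $[\Upsilon,\chi_{\tau}]$ times frame data, which by \eqref{Def:CommConst} for $\Dirb,\Dirp$ and by the Leibniz rule for $\conn,\divv$ is a pointwise multiplier of size $\Lipp\chi_{\tau}$ supported in $\Q\setminus\Q_{2\tau}$. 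Cauchy--Schwarz on each piece yields
\[
\Big|\int_{\cM}\chi_{\tau}\,\Upsilon u\,d\mu\Big|
\lesssim \mu(\Q)^{1/2}\|u\|_{\Lp{2}(\Q)}
+\frac{\tau^{\eta/2}}{\tau\len(\Q)}\mu(\Q)^{1/2}\|u\|_{\Lp{2}(\Q)}.
\]

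Writing $a=\fint_{\Q}|u|^{2}$ and $b=\fint_{\Q}|\Upsilon u|^{2}$, combining the two pieces and squaring gives
\[
\Big|\fint_{\Q}\Upsilon u\,d\mu\Big|^{2}
\lesssim a+\frac{\tau^{\eta-2}}{\len(\Q)^{2}}\,a+\tau^{\eta}\,b.
\]
The two $\tau$-dependent terms balance at $\tau^{2}\sim a/(b\len(\Q)^{2})$; substituting this value when it lies in the admissible range $(0,c)$ produces precisely $\len(\Q)^{-\eta}a^{\eta/2}b^{1-\eta/2}$. Outside that range one has $b\lesssim a/\len(\Q)^{2}$, and choosing $\tau=c$ together with the trivial Cauchy--Schwarz bound $|\fint_{\Q}\Upsilon u|^{2}\le b$ is easily absorbed into the right-hand side of the claim.

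The main obstacle, and where most of the care is required, is verifying the two pointwise bounds used in the integration by parts uniformly for all four operators $\Dirb,\Dirp,\conn,\divv$: pointwise boundedness of $\Upsilon$ on the frame test sections, and the $\Lipp\chi_{\tau}$-bound on the commutator. For $\Dirb$ and $\Dirp$ the commutator bound is exactly \eqref{Def:CommConst}, and the frame bound is the $\alpha_{l}^{jm}\conn_{v_{j}}\mh_{mk}+\mh_{ik}\omega_{l}^{i}$ computation already carried out in Lemma \ref{Lem:DirCan}; for $\conn$ and $\divv$ both bounds follow directly from the Leibniz rule, \ref{Hyp:BasGBGL}, and almost-everywhere compatibility of $\conn$ with $\mh$, as in Lemma \ref{Lem:D_iCan}. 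Once this uniform commutator analysis is in place, the rest of the proof is the one-parameter optimisation above.
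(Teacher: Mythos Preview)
Your argument is correct and is exactly the approach the paper intends: the paper gives no proof of its own here, citing \cite{AHLMcT}, and the cutoff-plus-thin-layer optimisation you describe is precisely the AHLMcT argument, transported to the present GBG vector-bundle setting via Lemmas \ref{Lem:DirCan} and \ref{Lem:D_iCan}. The one small inaccuracy is for $\Upsilon=\Dirp$: neither the commutator bound \eqref{Def:CommConst} nor the pointwise frame bound of Lemma \ref{Lem:DirCan} is stated for $\Dirp$, so that case needs an extra word (deriving both from the structural decomposition \eqref{Def:Struc} and the assumptions on $A_i$, noting in particular that the $\divv A_2$ contribution to $\Dirp(\mh_{ik}e^i)$ is only $\Lp{2}$-controlled in the generality of Theorem \ref{Thm:Main}); however, only the cases $\Upsilon=\Dirb$ and $\Upsilon=\conn$ are actually invoked downstream in Proposition \ref{Prop:CanPart}, so this does not affect the main line.
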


At this point, we note that 
the operator $\Dirb$ satisfies 
the following off-diagonal estimates. 
\begin{lemma}
\label{Lem:OD}
Let $U_t$ be one of 
$\Rrb_t = (\iden + \imath t \Dir)^{-1}$,
$\Ppb_t = (\iden + t^2 \Dir^2)^{-1}$, $\Qqb_t= t\Dir(\iden + t^2 \Dir^2)^{-1}$,
$t\conn \Ppb_t$,  $\Pp_t t\divv$, and $\Qq_t$. 
Then, there exists $C_{U} > 0$ 
such that, for each $M > 0$, there exists a constant $C_{\Delta} > 0$
so that
\begin{equation}
\label{Def:ODN}
\norm{\ch{E} U_t(\ch{F}u)} 
	\lesssim C_{\Delta} \maxx{\frac{\met(E,F)}{t} }^{-M}
		\exp\cbrac{-C_{U} \frac{\met(E,F)}{t}} \norm{\ch{F}u}
\end{equation}
for every Borel set $E,\ F \subset \cM$ and $u \in \Lp{2}(\cV)$.
\end{lemma}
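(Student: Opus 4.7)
The plan is to prove all six estimates by the classical Davies--Gaffney commutator method, establishing the estimate first for the resolvent $\Rrb_t$ and then bootstrapping to the other operators.

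\textbf{Step 1: The resolvent $\Rrb_t$.} Fix Borel sets $E,F$ with $d = \met(E,F)>0$, and define the Lipschitz cut-off
\[
\xi_\alpha(x) := \exp\!\cbrac{\alpha\, \min(\met(x,F), 2d)/t}, \qquad \alpha>0,
\]
so that $\xi_\alpha \equiv 1$ on $F$, $\xi_\alpha \geq e^{\alpha d/t}$ on $E$, and $\Lipp \xi_{\pm\alpha}(x) \leq (\alpha/t)\,\xi_{\pm\alpha}(x)$ almost everywhere. By hypothesis \ref{Hyp:PDO}, $[\Dirb,\xi_{-\alpha}]$ is a pointwise multiplier with $\smodulus{\xi_\alpha[\Dirb,\xi_{-\alpha}]u(x)} \leq c_{\Dirb}(\alpha/t)\modulus{u(x)}$. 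Setting $B = \xi_\alpha \Dirb \xi_{-\alpha}$, the identity $\iden+\imath t B = (\iden + \imath t \Dirb)\bigl(\iden + \imath t\,\Rrb_t\,\xi_\alpha[\Dirb,\xi_{-\alpha}]\bigr)$ shows, for $\alpha < 1/c_{\Dirb}$, that $\xi_\alpha \Rrb_t \xi_{-\alpha} = (\iden + \imath t B)^{-1}$ is bounded with norm at most $(1-c_{\Dirb}\alpha)^{-1}$. Applying this to $u = \ch{F}u$ and using $\ch{E}\xi_\alpha^{-1} \leq e^{-\alpha d/t}$ gives $\snorm{\ch{E}\Rrb_t \ch{F} u} \leq (1-c_{\Dirb}\alpha)^{-1} e^{-\alpha d/t}\norm{\ch{F}u}$. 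Taking $\alpha = 1/(2c_{\Dirb})$ and combining with the trivial bound $\norm{\Rrb_t}\leq 1$ (needed only on $\set{d\leq t}$) yields the full exponential-with-polynomial-prefactor bound, since $\exp(-Cd/t)$ dominates any polynomial on $\set{d\geq t}$.

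\textbf{Step 2: The operators $\Ppb_t$ and $\Qqb_t$.} Self-adjointness of $\Dirb$ gives the decompositions $\Ppb_t = \tfrac{1}{2}(\Rrb_t + \Rrb_t^*)$ and $\Qqb_t = \tfrac{1}{2\imath}(\Rrb_t^* - \Rrb_t)$. Since Step~1 applies equally to $\Rrb_t^*$ (by the identical commutator argument with $-\imath$), the off-diagonal bound \eqref{Def:ODN} follows immediately, with the same constant $C_U$. The corresponding estimate for $\Qq_t$ is obtained by repeating Steps~1--2 with $\Dirp$ in place of $\Dirb$; hypothesis \ref{Hyp:PDO} and self-adjointness hold identically for $\Dirp$.

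\textbf{Step 3: The operators $t\conn\Ppb_t$ and $\Pp_t\, t\divv$.} Choose a Lipschitz cut-off $\phi$ supported in the open $d/3$-neighbourhood of $E$, with $\phi\equiv 1$ on $E$ and $\Lipp\phi \leq 3/d$. For $u$ supported in $F$, write
\[
\phi\, t\conn\Ppb_t u = t\conn(\phi\,\Ppb_t u) - t(\conn\phi)\,\Ppb_t u.
\]
The second term is bounded by $(t/d)\snorm{\ch{\spt\phi}\Ppb_t u}$, which decays off-diagonally by Step~2 since $\met(\spt\phi,F) \geq 2d/3$. For the first term, \eqref{Def:DomConst} gives $\snorm{t\conn(\phi\,\Ppb_t u)} \lesssim t\snorm{\Dirb(\phi\,\Ppb_t u)} + t\snorm{\phi\,\Ppb_t u}$, and by hypothesis \ref{Hyp:PDO} together with $t\Dirb\Ppb_t = \Qqb_t$,
\[
t\Dirb(\phi\,\Ppb_t u) = t\,[\Dirb,\phi]\Ppb_t u + \phi\,\Qqb_t u,
\]
so both summands reduce to off-diagonal bounds for $\Ppb_t$ and $\Qqb_t$ already established in Step~2. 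The estimate for $\Pp_t\, t\divv$ follows by duality, since $(\Pp_t\, t\divv)^* = -t\conn\Pp_t$, reducing it to the corresponding estimate for $-t\conn\Pp_t$, which is handled by repeating Step~3 with $\Dirp$ in place of $\Dirb$.

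\textbf{Main obstacle.} The delicate point is Step~3: the cut-off $\phi$ produces a commutator $[\Dirb,\phi]$ which can only be controlled via hypothesis \ref{Hyp:PDO}, and the $\conn$-derivative must be absorbed using the domain equivalence \eqref{Def:DomConst}. Ensuring that the resulting error terms localise to the annulus $\spt\phi\setminus E$---so that one may apply the off-diagonal decay of $\Ppb_t$ and $\Qqb_t$ from Step~2 and recover the correct decay rate $C_U d/t$---requires carefully tracking the distance between $\spt\phi$ and $F$. Beyond this, the estimates follow by now-standard arguments.
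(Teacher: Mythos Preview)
Your proposal is correct and follows precisely the standard commutator (Davies--Gaffney) argument that the paper invokes by reference to Carbonaro--McIntosh--Morris; the paper itself supplies no details beyond pointing to the commutator estimate \eqref{Def:CommConst}. One small point worth tightening: hypothesis \ref{Hyp:PDO} is formulated only for $\Dirb$, so when you appeal to it for $\Dirp$ in Steps~2--3 you should remark that the analogous commutator bound for $\Dirp$ follows from \eqref{Def:CommConst} together with the perturbation structure \eqref{Def:Struc}, since $[\Dirp-\Dirb,\eta]=A_1[\conn,\eta]+[\divv,\eta]A_2$ is again a pointwise multiplier controlled by $(\norm{A_1}_\infty+\norm{A_2}_\infty)\Lipp\eta$.
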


This ``exponential'' version
of off-diagonal estimates 
first appeared as Lemma 5.3 in \cite{CMcM2} 
by Carbonaro, Morris and McIntosh.
The proof here is similar, and
relies on the commutator estimate 
\eqref{Def:CommConst}. 

With the aid of these tools, 
we estimate the cancellation term
in \eqref{Eq:SFEBreak}.
We note that the proof is similar 
to the corresponding result
found in \cite{AKMC}, with the exception 
being the complication arising from the operator
$S$ in the following statement. Thus, we
give sufficiently detailed recollection
of the proof.

\begin{proposition}
\label{Prop:CanPart}
Let $S = \iden$ or $S = \conn(\imath \iden + \Dirb)^{-1}$. Then, 
$$\int_0^{\Rd \hscale(\QQ) \Bk} \norm{\Pri_t\Av_tS(\iden - \Ppb_t) u}^2\ \dtt 
	\lesssim \norm{u}^2.$$
\end{proposition}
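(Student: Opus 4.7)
My plan is to couple the cancellation lemma (Lemma~\ref{Lem:Can}) with the $L^2$ quadratic estimate $\int_0^\infty \norm{\Qqb_t u}^2\, \dtt \leq \tfrac{1}{2}\norm{u}^2$ available from the self-adjointness of $\Dirb$. The pivotal algebraic identity is $\iden - \Ppb_t = t\Dirb\Qqb_t$ in the functional calculus; this exhibits $(\iden - \Ppb_t) u$ as $\Dirb$ applied to the bounded section $t\Qqb_t u$, so that on each dyadic cube $\Q \in \DyQ_t$ the quantity $\Av_t S(\iden - \Ppb_t) u\big|_{\Q}$ is literally a cube-average of a first-order derivative and ripe for the cancellation lemma.

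I will treat the two cases in parallel. For $S = \iden$, I would write $\Av_t(\iden - \Ppb_t) u\big|_\Q = t\fint_\Q \Dirb(\Qqb_t u)\, d\mu$ and invoke Lemma~\ref{Lem:Can} with $\Upsilon = \Dirb$ on $v = \Qqb_t u$, converting the output using $\Dirb \Qqb_t u = t^{-1}(\iden - \Ppb_t) u$. For $S = \conn(\imath\iden + \Dirb)^{-1}$, the functional-calculus identity
\[
(\imath\iden + \Dirb)^{-1}(\iden - \Ppb_t) = t\Qqb_t - \imath t\Qqb_t(\imath\iden + \Dirb)^{-1}
\]
shows that $w_t := (\imath\iden + \Dirb)^{-1}(\iden - \Ppb_t) u$ is in $\Lp{2}$ with $\norm{w_t} \lesssim t\norm{u}$, and by hypothesis~\ref{Hyp:Dom} also $\norm{\conn w_t} \lesssim \norm{u}$; I would then apply Lemma~\ref{Lem:Can} with $\Upsilon = \conn$ on $w_t$.

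Multiplying the resulting pointwise bound on $\Q$ by $\modulus{\Pri_t}^2$, integrating over $\Q$ using $\fint_\Q \modulus{\Pri_t}^2 \leq \norm{A}_\infty^2$ from \eqref{Eq:EtAtbdd}, and summing over $\Q \in \DyQ_t$ via Hölder's inequality in the exponents $2/\eta$ and $2/(2-\eta)$, I expect an estimate of the form
\[
\norm{\Pri_t \Av_t S(\iden - \Ppb_t) u}^2 \lesssim \norm{A}_\infty^2 \bigl( \norm{\Qqb_t u}^\eta \norm{(\iden - \Ppb_t) u}^{2-\eta} + t^2 \norm{\Qqb_t u}^2 \bigr),
\]
or the analogous expression involving $w_t$ in the $\conn$-case. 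The secondary term integrates against $\dtt$ to $\lesssim \norm{u}^2$ trivially via $\norm{\Qqb_t u} \leq \norm{u}/2$.

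The hard part will be the final $t$-integration of the main cancellation term: a naive Hölder split against $\int_0^{\hscale(\QQ)} \dtt$ is insufficient because the latter diverges logarithmically at $t = 0$, so closing the estimate demands a careful balancing among the uniform bound $\norm{\Qqb_t u} \leq \norm{u}/2$, the quadratic estimate for $\Qqb_t$, and the specific $t$-dependence retained in $w_t = O(t)$ in the $\conn$-case (or in $\norm{(\iden - \Ppb_t) u}^{2-\eta}$ in the $\Dirb$-case) to absorb the factor $t^{-\eta}$ produced by Lemma~\ref{Lem:Can} and yield the stated bound by $\norm{u}^2$.
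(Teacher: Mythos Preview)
Your plan follows the paper's strategy in spirit—apply the cancellation lemma and then sum—but the single-scale version you set up does not close, and the paper resolves this by introducing a \emph{second} scale variable.

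Concretely, after your H\"older step you arrive (in the $S=\iden$ case) at
\[
\norm{\Pri_t\Av_t(\iden-\Ppb_t)u}^2 \lesssim \norm{A}_\infty^2\bigl(\norm{\Qqb_t u}^\eta\,\norm{(\iden-\Ppb_t)u}^{2-\eta} + t^2\norm{\Qqb_t u}^2\bigr),
\]
and in the $\conn$-case at the analogous bound with $\norm{w_t}^\eta\,\norm{\conn w_t}^{2-\eta}$. The difficulty you flag in your last paragraph is real and cannot be overcome with the ingredients you list. The factor $\norm{(\iden-\Ppb_t)u}$ (and likewise $\norm{\conn w_t}\lesssim\norm{(\iden-\Ppb_t)u}$) has \emph{no} square-function control: for an eigenvector at eigenvalue $\lambda$ one has $\int_0^1\norm{(\iden-\Ppb_t)u}^r\,\dtt\simeq\log\lambda$ for every $r>0$, so no H\"older split in $t$ can work. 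Your claimed ``$t$-dependence retained in $w_t=O(t)$'' has already been spent cancelling the $\len(\Q)^{-\eta}\simeq t^{-\eta}$ from Lemma~\ref{Lem:Can}; there is nothing left to absorb the divergent $\int\dtt$.

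The paper's proof avoids this by first reducing (via $\Av_t^2=\Av_t$ and \eqref{Eq:EtAtbdd}) to estimating $\norm{\Av_tS(\iden-\Ppb_t)u}$, and then proving the two-parameter Schur kernel bound
\[
\norm{\Av_tS(\iden-\Ppb_t)\Qqb_s}\lesssim\min\bigl\{(s/t)^\alpha,(t/s)^\alpha\bigr\}
\]
for some $\alpha>0$. For $t\le s$ this follows from the identity $(\iden-\Ppb_t)\Qqb_s=\tfrac{t}{s}\Qqb_t(\iden-\Ppb_s)$; for $t>s$ one uses $\Ppb_t\Qqb_s=\tfrac{s}{t}\Qqb_t\Ppb_s$ and then applies Lemma~\ref{Lem:Can} to $\Av_tS\Qqb_s$, written as cube-averages of $s\conn\Ppb_s u$ (respectively $s\Dirb\Ppb_s u$). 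The crucial point is that the cancellation lemma is applied at scale $s$, producing an $(s/t)^\eta$ gain, while the $t$-integral is handled afterwards by the Schur test against the quadratic estimate for $\Qqb_s$. Your direct route collapses $s=t$ and thereby loses exactly the off-diagonal gain that makes the argument converge.
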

\begin{proof}
First we note that $\Av_t^2 = \Av_t$,
and therefore, 
$$\norm{\Pri_t \Av_t S(\iden  - \Ppb_t)u}
	= \norm{\Pri_t \Av_t \Av_t S(\iden - \Ppb_t)u}
	\leq \norm{A}_{\infty} \norm{\Av_tS(\iden  - \Ppb_t)u}.$$
By Schur estimate techniques (see Proposition 5.7 in \cite{AKMC}), 
it suffices to prove that
$$ \norm{\Av_tS(\iden - \Ppb_t)\Qqb_s} \lesssim \min\set{\cbrac{\frac{s}{t}}^\alpha, \cbrac{\frac{t}{s}}^\alpha}$$
for some $\alpha > 0$.

Note the identities
\begin{equation}
\label{Eq:Can:Ident}
(\iden - \Ppb_t)\Qqb_s = \frac{t}{s}\Qqb_t( \iden - \Ppb_s)
\ \text{and}\ 
\Ppb_t\Qqb_s = \frac{s}{t}\Qqb_t\Ppb_s.
\end{equation}

For $t \leq s$, it immediately follows 
from \eqref{Eq:Can:Ident} that
$$\norm{\Av_t S(\iden - \Ppb_t)\Qqb_s} 
	\lesssim \norm{(\iden - \Ppb_t)\Qqb_s}
	\lesssim \frac{t}{s}.$$

For $t > s$, we write
$$\norm{\Av_tS(\iden - \Ppb_t)\Qqb_s}
	\lesssim \norm{\Av_t S \Qqb_s} + \norm{\Ppb_t\Qqb_s} 
	\lesssim  \norm{\Av_t S \Qqb_s} +  \frac{s}{t},$$
where the last inequality follows from \eqref{Eq:Can:Ident}.
Thus, we only need to prove that there
is an $\alpha >0$ such that 
$$\norm{\Av_tS\Qqb_s} \lesssim \cbrac{\frac{s}{t}}^\alpha.$$
	
Fix $u \in \Lp{2}(\cV)$
and note that 
\begin{equation}
\label{Eq:Can2}
\norm{\Av_tS\Qqb_s u}^2 = \sum_{\Q \in \DyQ_t} 
	\norm{ \Av_tS \Qqb_su}^2_{\Lp{2}(\Q)}.
\end{equation}

If $S = \conn(\imath\iden + \Dir)^{-1}$,  we have that 
$$S\Qqb_s = S s \Dirb \Ppb_s = \conn(\imath\iden + \Dirb)^{-1} s \Dirb \Ppb_s 
	=  s\conn\Ppb_s - \imath s \conn(\imath \iden + \Dirb)^{-1} \Ppb_s.$$ 
Also, for $x \in \Q$,
$$\Av_tS\Qqb_s u(x) = \fint_{\Q} s\conn \Ppb_s u\ d\mu 
	- \fint_{\Q} \imath s\conn \Ppb_s (\imath \iden + \Dirb)^{-1} \Ppb_s u\ d\mu ,$$ 
and therefore, 
\begin{equation}
\label{Eq:Can3} 
\begin{aligned}
\norm{\Av_tS\Qqb_s u}_{\Lp{2}(\Q)}^2 
	&= \int_{\Q} \modulus{\fint_{\Q} s\conn \Ppb_s u\ d\mu 
		- \fint_{\Q} \imath s\conn \Ppb_s (\imath \iden + \Dirb)^{-1} u\ d\mu}^2\ d\mu \\
	&\lesssim \mu(\Q) \modulus{\fint_{\Q} s\conn \Ppb_s u\ d\mu}^2 
		+ \mu(\Q) \modulus{\fint_{\Q} s\conn \Ppb_s (\imath \iden + \Dirb)^{-1} u\ d\mu}^2.
\end{aligned}
\end{equation}
 In the case $S = \iden$, we obtain that
$\Av_tS\Qqb_s u = \fint_{\Q} s\Dirb \Ppb_s u\ d\mu$,
so that 
$$\norm{\Av_t S\Qqb_su}_{\Lp{2}(\Q)} \simeq
	\mu(\Q) \modulus{\fint_{\Q} s\Dir \Ppb_s u\ d\mu}^2.$$
 This latter estimate can be handled if we can handle the
former estimate and so it suffices to only consider this case. 
On noting that $t \simeq \len(\Q)$ \Rd from \eqref{Eq:HConsts},\Bk by Lemma \ref{Lem:Can}
\begin{multline*}
\modulus{\fint_{\Q} s\conn \Ppb_s u\ d\mu}^2
	\lesssim \cbrac{\frac{s}{t}}^\eta \frac{1}{\mu(\Q)}
		\norm{\Ppb_s u}^\eta_{\Lp{2}(\Q)}\norm{s\conn \Ppb_s u}^{2 - \eta}_{\Lp{2}(\Q)} \\
	+ t^2 \cbrac{\frac{s}{t}}^2 \frac{1}{\mu(\Q)} \norm{\Ppb_s u}^2_{\Lp{2}(\Q)}.
\end{multline*}

Then, by choosing $p = 2/\eta$ and $q = 2/(2 - \eta)$, and
by H\"older's inequality and 
 the uniform boundedness of $\Ppb_s$, $s \Ppb_s$, and $\Qqb_s = s\Dir \Ppb_s$
on $s \in (0, 1]$, 
\begin{align*} 
\sum_{\Q \in \DyQ_t}
&\norm{\Ppb_s u}^\eta_{\Lp{2}(\Q)} \norm{s\conn \Ppb_s u}^{2 - \eta}_{\Lp{2}(\Q)} \\
	&\lesssim \cbrac{\sum_{\Q \in \DyQ_t} \norm{\Ppb_s u}^2_{\Lp{2}(\Q)}}^{\frac{\eta}{2}}
		\cbrac{\sum_{\Q \in \DyQ_t} \norm{s\conn \Ppb_s u}^{2}_{\Lp{2}(\Q)}}^{\frac{2 - \eta}{2}} \\
	& \lesssim \norm{\Ppb_s u}^\eta (\norm{s \Dir \Ppb_s u}^2 + \norm{s \Ppb_s u}^2)^{\frac{2 - \eta}{2}} 
	\lesssim \norm{u}^2. 
\end{align*}

Thus, for $u$ replaced by $(\imath \iden + \Dirb)^{-1}u$,
we obtain, 
\begin{align*} 
\norm{\Av_tS\Qqb_s u}^2 &\lesssim \cbrac{\frac{s}{t}}^2 \norm{u}^2 
	+ \cbrac{\frac{s}{t}}^\eta \norm{u}^2
	+ \cbrac{\frac{s}{t}}^\eta \norm{(\imath \iden + \Dir)^{-1}u}^2 \\ 
	&\lesssim \cbrac{\frac{s}{t}}^2 \norm{u}^2 + \cbrac{\frac{s}{t}}^\eta \norm{u}^2.
\end{align*}
This finishes the proof.
\end{proof}

\subsection{The Carleson term}
\label{Sec:SFECarl}

We are now left with the task of estimating the last term,
the Carleson term in \eqref{Eq:SFEBreak}. 
Recall that $\nu$ is a \emph{local Carleson measure}
on $\cM \times (0,t']$ (for some $t' \in (0, \scale]$, 
 where $\scale$ is the scale we define in \S\ref{Sec:DyaGrid} ) if
$$
\norm{\nu}_{\Carl} = \sup_{t \in (0,t']} \sup_{\Q \in \DyQ_t} \
	\frac{\nu(\CBox(\Q))}{\mu(\Q)} < \infty,$$ 
where $\CBox(\Q) = \Q \times (0, \len(\Q))$, the 
\emph{Carleson box} over $\Q$.
The norm $\norm{\nu}_{\Carl}$ is the \emph{local Carleson norm}
of $\nu$.

If $\nu$ is a local Carleson measure,
then by Carleson's inequality,
$$\iint_{\cM \times (0,t']} \modulus{\Av_t(x)u(x)}^2\ d\nu(x,t) \lesssim \norm{\nu}_{\Carl} \norm{u}^2$$
for all $u \in \Lp{2}(\cV)$.
This is proved for functions in Theorem 4.2 in \cite{Morris3}
but we note that the proof carries over
\emph{mutatis mutandis} to our setting.

Since $S$ is a bounded operator, we can reduce Carleson's inequality
$$ \int_{0}^1 \norm{\Pri_t \Av_t S u}^2\ \dtt \lesssim \norm{A}_{\infty}^2 \norm{u}^2$$
to showing that
$$ d\nu(x,t) = \modulus{\Pri_t(x)}^2\ \frac{d\mu(x)dt}{t}$$ 
is a local Carleson measure 
with Carleson norm controlled by $\norm{A}_{\infty}^2$.

Fix a cube $\Q \in \DyQ_t$, let $\Ball_{\Q} = \Ball(x_{\Q}, C_1\len(\Q))$,
 Note that since we consider $t' \leq \scale$, 
we have that $3\Ball_{\Q} \subset \Ball(x_{\ancester{\Q}}, C_1\len(\ancester{\Q}))$,
where $\brad$ is the GBG radius. This is one reason why we fix
$\scale \leq \brad/5$ in our analysis. 

For $w \in \C^N$, let $w^c$ 
denote the local constant extension of $w$
\Rd as defined in \eqref{Eq:ConstExt},
and define \Bk $w^{\Q} = \ch{2\Ball_{\Q}}w^c$.
Then, we note that
$$\iint_{\CBox(\Q)} \modulus{\Pri_t(x)}^2\ \frac{d\mu(x)dt}{t}
	\lesssim  \sup_{\modulus{w}_{\C^N} = 1} \int_{0}^{\len(\Q)} \int_{\Q} \modulus{\Pri_t\Av_t w^{\Q}}^2 \frac{d\mu dt}{t},$$
and therefore,
it suffices to prove that 
\begin{equation}
\label{Eq:CarlRed}
\int_{0}^{\len(\Q)} \int_{\Q} \modulus{\Pri_t\Av_t w^{\Q}}^2 \frac{d\mu dt}{t}
	\lesssim \norm{A}_{\infty}^2 \mu(\Q)
\end{equation}
for each $\modulus{w}_{\C^N}  = 1$.

In order to do this, we split up this integral
in the following way: 
\begin{multline}
\int_{0}^{\len(\Q)} \int_{\Q} \modulus{\Pri_t\Av_t w^{\Q}}^2 \frac{d\mu dt}{t}
	\lesssim \\ 
	\int_{0}^{\len(\Q)} \int_{\Q} \modulus{(\Pri_t\Av_t - \QQ_t) w^{\Q}}^2 \frac{d\mu dt}{t}
		+ 
	\int_{0}^{\len(\Q)} \int_{\Q} \modulus{\QQ_t w^{\Q}}^2 \frac{d\mu dt}{t}
\end{multline}

\begin{proposition}
\label{Prop:Carl1}
Let $\QQ_t:\Lp{2}(\cW) \to \Lp{2}(\cV)$ be a
family of operators uniformly bounded in $t \in (0, 1]$
satisfying \eqref{Def:OD}.  
Then for each cube $\Q \in \DyQ_t$,
$$
\int_{0}^{\len(\Q)} \int_{\Q} \modulus{(\Pri_t\Av_t - \QQ_t) w^{\Q}}^2 \frac{d\mu dt}{t}
	\lesssim \norm{A}_{\infty}^2 \mu(\Q),$$
whenever $t \in (0, t_3(\QQ)]$, where $t_3(\QQ) = \min\set{\hscale(\QQ), \frac{C_{\QQ}}{3c_E}}$.
The implicit constant depends on $\const(\cM,\cV,\Dirb,\Dirp)$
and $C_{\Delta,\kappa + 1}$ from \eqref{Def:OD}.
\end{proposition}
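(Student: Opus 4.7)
The plan is to follow the template established in the proof of Proposition \ref{Prop:PrinPart}, combining the defining identity of the principal part with the off-diagonal decay \eqref{Def:OD} of $\QQ_s$ and the local volume growth \eqref{Def:Eloc}. Write $s$ for the integration variable. For a fixed cube $Q$ and each $s \in (0, \len(Q)]$, dyadic nestedness forces every $Q' \in \DyQ_s$ meeting $Q$ to be contained in $Q$; since $s \leq \scale$ and the GBG ancestor $\ancester{Q}$ lies in $\DyQ^\jscale$ by construction \eqref{Eq:HConsts}, one has $\ancester{Q'} = \ancester{Q}$, so $Q$ and $Q'$ share a common GBG frame. Writing $\tilde{w}_{Q'}$ for the constant extension \eqref{Eq:ConstExt} of the average $(\Av_s w^{\Q})|_{Q'}$, the defining property \eqref{Eq:GammaQ} of $\Pri_s$ yields $(\Pri_s \Av_s w^{\Q})(x) = (\QQ_s \tilde{w}_{Q'})(x)$ for $x \in Q'$, and hence on $Q'$
\[
(\Pri_s \Av_s - \QQ_s) w^{\Q} = \QQ_s(\tilde{w}_{Q'} - w^{\Q}).
\]

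In the common GBG frame the coefficients of $w^c$ are the constants $w_i$, and since $Q' \subset 2\Ball_Q$ one gets $\fint_{Q'}(w^{\Q})_i\, d\mu = w_i$; hence $\tilde{w}_{Q'} = w^c$ throughout $\Ball(x_{\ancester{Q}}, \brad)$, so $\tilde{w}_{Q'} - w^{\Q} = \ch{\Ball(x_{\ancester{Q}}, \brad) \setminus 2\Ball_Q}\, w^c$. This residual is supported at distance $\gtrsim \len(Q)$ from $Q'$. Decomposing the support along the annuli $C_j(Q) = 2^{j+1}\Ball_Q \setminus 2^j \Ball_Q$, applying \eqref{Def:OD} with $M = \kappa + 1$, the pointwise bound $\modulus{w^c} \lesssim 1$, and the overlap estimate $\ch{2^{j+1}\Ball_Q} \lesssim 2^{\kappa j} \exp(4 c_E 2^{j+1} \len(Q))$ from Lemma 8.3 of \cite{BMc} (already invoked in the proof of Proposition \ref{Prop:PrinPart}), a Cauchy--Schwartz argument gives
\[
\int_{Q'} \modulus{(\Pri_s \Av_s - \QQ_s) w^{\Q}}^2 d\mu \lesssim \norm{A}_\infty^2\, \mu(Q') \sum_{j \geq 1} 2^{-j} \exp\!\Bigl(2^{j+1} \len(Q) \bigl(c_E - c\, C_{\QQ}/s \bigr)\Bigr)
\]
for a suitable positive geometric constant $c$.

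Summing $\mu(Q')$ over $Q' \subset Q$ produces $\mu(Q)$, and the restriction $s \leq t_3(\QQ) \leq C_{\QQ}/(3 c_E)$ bounds the exponent by $-c_E 2^{j+1} \len(Q)$, so the $j$-series is dominated by a constant multiple of $\exp(-c' \len(Q)/s)$. Integrating $\frac{ds}{s}$ against this factor on $(0, \len(Q)]$ is finite via the substitution $u = \len(Q)/s$, yielding the claimed $\norm{A}_\infty^2 \mu(Q)$ bound.

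The main obstacle lies in balancing the off-diagonal decay rate $C_{\QQ}/s$ against the volume-growth constant $c_E$ so that the $j$-series converges with enough surplus decay in $s$ to absorb the singular measure $\frac{ds}{s}$; this is precisely what forces the threshold $s \leq C_{\QQ}/(3 c_E)$ appearing in $t_3(\QQ)$. A secondary but essential point is the cancellation $\tilde{w}_{Q'} = w^c$ in the principal-part reduction, which relies on $Q$ and $Q'$ sharing a GBG frame (guaranteed by $s \leq \scale$) and on the inclusion $2\Ball_Q \subset \Ball(x_{\ancester{Q}}, \brad)$ (guaranteed by $\scale \leq \brad/5$ in \eqref{Eq:HConsts}).
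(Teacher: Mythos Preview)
Your key identity—that on each subcube $Q' \subset Q$ one has $(\Pri_s\Av_s - \QQ_s)w^Q = \QQ_s(w^c - w^Q)$—is exactly the reduction the paper makes, and since $v := w^c - w^Q$ is independent of $Q'$, the passage through subcubes is unnecessary; the paper works directly on $Q$. However, two steps in your execution fail as written.

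First, the off-diagonal estimate \eqref{Def:OD} applied to $\ch{Q'}\QQ_s(\ch{C_j(Q)}v)$ yields a bound involving $\norm{\ch{C_j(Q)}v}^2 \lesssim \mu(C_j(Q))$, not $\mu(Q')$; your displayed inequality with $\mu(Q')$ is therefore incorrect, and summing it over the subcubes of $Q$ would not recover $\mu(Q)$ but rather blow up as $s\to 0$. The fix (and the paper's route) is to estimate $\int_Q \modulus{\QQ_s v}^2\,d\mu$ in one piece and bound $\mu(C_j(Q)) \lesssim 2^{\kappa(j+1)} \exp(c_E C_1 2^{j+1}\len(Q))\,\mu(Q)$ directly from \eqref{Def:Eloc}. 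Lemma~8.3 of \cite{BMc} concerns overlap of a \emph{disjoint family} of balls and plays no role here.

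Second, bounding the exponent by the $s$-independent quantity $-c_E 2^{j+1}\len(Q)$ leaves the $j$-series merely $\lesssim 1$, which is not $\tfrac{ds}{s}$-integrable over $(0,\len(Q)]$; the jump from there to the $s$-dependent bound $\exp(-c'\len(Q)/s)$ is a non sequitur. You must retain the $s$-decay explicitly. The paper does this through the polynomial factor: since $\met(Q, C_j(Q)) \gtrsim 2^j \len(Q)$, the term $\maxx{\met/s}^{-M}$ contributes a factor $(s/\len(Q))^M 2^{-Mj}$, and with $M = \kappa + 1$ one arrives at $\int_Q \modulus{\QQ_s v}^2\,d\mu \lesssim \norm{A}_\infty^2 \,(s/\len(Q))\,\mu(Q)$, which integrates against $\tfrac{ds}{s}$ to give $\mu(Q)$. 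Your alternative of keeping the full $-c\,C_{\QQ}\,2^j\len(Q)/s$ in the exponent and extracting $\exp(-c'\len(Q)/s)$ also works, but it requires the sharper intermediate bound $c_E - c\,C_{\QQ}/s \leq -c\,C_{\QQ}/(2s)$ rather than the $s$-independent one you stated.
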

\begin{proof}
First, we note that for $x \in \Q$, 
$\Av_t w^{\Q}(x) = w^c(x)$ and hence, $\Pri_t(x) \Av_t w^{\Q}(x) = (\QQ_tw^c)(x)$.
Setting $v = w^{\Q} - w^c$, we have
$\modulus{(\Pri_t \Av_t - \QQ_t)w^{\Q}} = \modulus{\QQ_tv}$
almost-everywhere in $\Q$. 

Letting $C_j(\Q) = 2^{j+1}\Ball_{\Q} \setminus 2^j \Ball_{\Q}$,
and fixing $M > 0$ to be chosen later,
we estimate via \eqref{Def:OD} and  by using Cauchy-Schwartz 
as in \eqref{Eq:Pri1}  
\begin{equation}
\begin{aligned}
\label{Eq:CarlPri1}
\int_{Q} \modulus{ \QQ_t v}^2\ d\mu  
	&= \int_{\Q} \modulus{ \QQ_t\cbrac{\sum_{j=0}^\infty \ch{C_j(\Q)}} v}^2\ d\mu \\
	&\lesssim \norm{A}_{\infty}^2 \sum_{j=0}^\infty \maxx{\frac{\met(\Q, C_j(\Q))}{t} }^{-M}  \\
		&\qquad\qquad\qquad\exp \cbrac{ - 2 C_{\QQ} \frac{\met(\Q, C_j(\Q))}{t}}  
		\int_{\cM} \modulus{\ch{C_j(\Q)}v}^2\ d\mu. 
\end{aligned}
\end{equation}

First, note that $v(x) = w^{\Q}(x) - w^c(x) = \ch{2\Ball_{\Q}}(x) w_i e^i(x) - w_i e^i(x)$
and hence, $\modulus{v(x)} \leq 1$ for almost-every $x$, and thus
$$\int_{\cM} \modulus{\ch{C_j(\Q)}v}^2\ d\mu 
	\leq \mu( C_j(\Q)) \leq \mu( 2^{j+1} \Ball_{\Q}).$$
Moreover, from \eqref{Def:Eloc}  and since $\delta^{j+1} < t \leq \len(\Q) = \delta^j$, 
$$
\mu(2^{j+1}\Ball_{\Q}) \leq  \mu(\Ball(x_{\Q}, 2^{j+1}t C_1/\delta))
	\lesssim 2^{\kappa(j+1)} \exp \cbrac{c_E\frac{C_1}{\delta} 2^{j+1} t} \mu(\Q).$$
Thus, on combining these two inequalities
with \eqref{Eq:Pri2} 
we obtain from \eqref{Eq:CarlPri1} that 
$$
\int_{Q} \modulus{ \QQ_t v}^2\ d\mu  
	\lesssim \norm{A}_{\infty}^2 \frac{t}{\len(\Q)} \mu(\Q)
		\sum_{j=0}^\infty 2^{(\kappa - M)(j+1)} 
		\exp\cbrac{\cbrac{\frac{c_E C_1}{\delta}t -\frac{C_\QQ C_1}{2\delta}} 2^{j+1}}.$$
Thus, by choosing $M > \kappa$, or explicitly, setting $M = \kappa + 1$
and choosing 
$t \leq \frac{C_{\QQ}}{3c_E}$, the right hand sum converges.
That is, 
$$\int_{\Q} \modulus{(\Pri_t\Av_t - \QQ_t) w^{\Q}}^2 \frac{d\mu dt}{t} 
	\lesssim \norm{A}_{\infty}^2 \mu(\Q),$$ 
which completes the proof. 
\end{proof}

From this, we obtain the following.

\begin{proposition}
\label{Prop:CarlPart}
Let $\QQ_t:\Lp{2}(\cW) \to \Lp{2}(\cV)$ be a
family of operators uniformly bounded in $t \in (0, 1]$
satisfying \eqref{Def:SFE} and \eqref{Def:OD}. 
Then, whenever $S \in \bddlf(\Lp{2}(\cV))$, for every $u \in \Lp{2}(\cV)$, we
obtain that 
$$ \int_{0}^{t_3(\QQ)} \norm{\Pri_t \Av_t S u}^2\ \dtt \lesssim \norm{A}_{\infty}^2 \norm{u}^2,$$
where $t_3(\QQ) = \min\set{\hscale(\QQ), \frac{C_{\QQ}}{3c_E}}$
and where the implicit constants depend on 
the bound on $\norm{S}_{\Lp{2}\to\Lp{2}}$,
$C(\QQ)'$ from \eqref{Def:SFE}, $C_{\Delta,\kappa + 1}$ from
\eqref{Def:OD}, and $\const(\cM,\cV,\Dirb,\Dirp)$.
\end{proposition}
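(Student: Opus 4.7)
The plan is to reduce the stated quadratic estimate to a local Carleson measure estimate for the measure $d\nu(x,t) = \modulus{\Pri_t(x)}^2\,d\mu(x)\,dt/t$ on $\cM \times (0,t_3(\QQ)]$. Once this measure has $\norm{\nu}_{\Carl} \lesssim \norm{A}_\infty^2$, the bound follows by writing
\[
\int_0^{t_3(\QQ)} \norm{\Pri_t \Av_t Su}^2\,\dtt
\leq \iint_{\cM\times(0,t_3(\QQ)]} \modulus{\Av_t(Su)(x)}^2\,d\nu(x,t),
\]
applying Carleson's inequality (the version in Theorem~4.2 of \cite{Morris3}, which transfers to our setting as indicated), and using boundedness of $S$ on $\Lp{2}(\cV)$ to absorb $\norm{Su}^2 \lesssim \norm{u}^2$.

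Per the reduction already carried out in \eqref{Eq:CarlRed}, to establish the Carleson bound it suffices to prove, for each cube $\Q \in \DyQ_t$ with $t \in (0,t_3(\QQ)]$ and each $w \in \C^N$ with $\modulus{w}=1$,
\[
\int_0^{\len(\Q)} \int_\Q \modulus{\Pri_t \Av_t w^{\Q}}^2\,\frac{d\mu\,dt}{t}
\lesssim \norm{A}_\infty^2\,\mu(\Q).
\]
I would split this via the triangle inequality into the ``principal part minus operator'' piece and the ``operator'' piece:
\[
\int_0^{\len(\Q)}\!\int_\Q \modulus{(\Pri_t\Av_t - \QQ_t) w^\Q}^2\,\tfrac{d\mu\,dt}{t}
\ +\
\int_0^{\len(\Q)}\!\int_\Q \modulus{\QQ_t w^\Q}^2\,\tfrac{d\mu\,dt}{t}.
\]
The first summand is controlled by $\norm{A}_\infty^2 \mu(\Q)$ directly by Proposition~\ref{Prop:Carl1}, which is why the scale restriction $t_3(\QQ) = \min\{\hscale(\QQ), C_\QQ/(3c_E)\}$ appears.

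For the second summand, the strategy is to apply the assumed quadratic estimate \eqref{Def:SFE} to the fixed $\Lp{2}$-section $w^\Q = \ch{2\Ball_\Q} w^c$. Since $w^c$ is a GBG constant extension with $\modulus{w^c(x)} \simeq 1$ almost-everywhere on its support (by the GBG comparability constant $C_{\cV}$), we have the pointwise bound $\modulus{w^\Q} \leq C_{\cV} \ch{2\Ball_\Q}$, so
\[
\norm{w^\Q}_{\Lp{2}(\cV)}^2 \lesssim \mu(2\Ball_\Q) \lesssim \mu(\Q),
\]
where the last inequality uses the exponential volume growth \eqref{Def:Eloc} together with the containment $\Ball(x_\Q, a_0\len(\Q)) \subset \Q \subset 2\Ball_\Q$ from Theorem~\ref{Thm:Dya:Christ}, both valid at scale $t \leq \scale$. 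Restricting the $\cM$-integral in \eqref{Def:SFE} to $\Q$ and extending the $t$-integration from $(0,\len(\Q)]$ to $(0,1]$ yields
\[
\int_0^{\len(\Q)} \int_\Q \modulus{\QQ_t w^\Q}^2\,\frac{d\mu\,dt}{t}
\leq \int_0^1 \norm{\QQ_t w^\Q}^2\,\dtt
\leq C_\QQ'\,\norm{A}_\infty^2\,\norm{w^\Q}^2
\lesssim \norm{A}_\infty^2\,\mu(\Q).
\]

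No step here is a serious obstacle: Proposition~\ref{Prop:Carl1} already did the hard off-diagonal work, and the role of \eqref{Def:SFE} is precisely to control the tested piece. The only care needed is in the passage between scales of $w^\Q$: one must verify that $2\Ball_\Q$ still lies inside the GBG chart over $\ancester{\Q}$, which is guaranteed by the choice $\scale \leq \brad/5$ fixed in \eqref{Eq:HConsts}. Combining the two bounds produces $\norm{\nu}_{\Carl} \lesssim \norm{A}_\infty^2$ with implicit constant depending on $\const(\cM,\cV,\Dirb,\Dirp)$, $C_\QQ'$ and $C_{\Delta,\kappa+1}$, and the proposition follows from Carleson's inequality as above.
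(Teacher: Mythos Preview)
Your argument is correct and follows the paper's proof essentially verbatim: the same reduction to a Carleson measure via \eqref{Eq:CarlRed}, the same splitting into the $(\Pri_t\Av_t - \QQ_t)w^{\Q}$ piece handled by Proposition~\ref{Prop:Carl1} and the $\QQ_t w^{\Q}$ piece handled by \eqref{Def:SFE} together with $\norm{w^{\Q}}^2 \lesssim \mu(\Q)$. You have supplied a bit more detail in places (the GBG comparability for $\modulus{w^c}$, the containment of $2\Ball_{\Q}$ in the GBG chart), but the structure and ingredients are identical.
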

\begin{proof}
This follows from Proposition \ref{Prop:Carl1} and the computation:
$$\int_{0}^{\len(\Q)} \int_{\Q} \modulus{\QQ_t  w^{\Q}}^2 \frac{d\mu dt}{t}
	\lesssim  \int_{0}^{1} \norm{\QQ_t w^{\Q}}^2\ \dtt 
	\lesssim \norm{A}_{\infty}^2 \norm{w^{\Q}}^2 \lesssim 
	\norm{A}_{\infty}^2 \mu(\Q)$$
where the second inequality comes from the
\eqref{Def:SFE} assumption on $\QQ_t$ and the third inequality
follows from the fact that $\spt w^{\Q} \subset 2 \Ball_{\Q}$
and $\mu(2\Ball_{\Q}) \lesssim \mu(\Q)$ by \eqref{Def:Eloc}.
\end{proof}

\subsection{Proof of the main theorem}
\label{Sec:SFEUnited}

Finally, we gather the estimates in \S\ref{Sec:Red}
and \S\ref{Sec:SFE} to obtain a proof of the
main theorem.

\begin{proof}[Proof of Theorem \ref{Thm:Main}]
First, we note that,  
by Proposition \ref{Prop:FinalRed}, it suffices to show that
\begin{align*}
&\int_{0}^1 \norm{t\Pp_t\divv A_2 \Ppb_tf}^2\ \dtt
	\lesssim \norm{A}_{\infty}^2 \norm{f}^2,
	\quad\text{and}\quad \\
	&\int_{0}^1 \norm{\Qq_t A_1 \conn (\imath\iden + \Dir)^{-1} \Ppb_t f}^2\ \dtt 
	\lesssim \norm{A}_{\infty}^2 \norm{f}^2.
\end{align*} 

For the first inequality,
we set $\QQ_t = t\Pp_t\divv A_2$, \Rd 
and noting the identity 
$ t \Pp_t \divv = (\Qq_t + \imath t\Pp_t)\adj{(\conn (\imath \iden - \Dirp)^{-1})},$
the  quadratic estimates for $\Qq_t$, 
the boundedness of $\Pp_t$ uniformly in $t$ and the  
the boundedness of $\conn (\imath \iden -\Dirp)^{-1}$, 
we obtain 
$$ \int_{0}^1 \norm{\QQ_t f}^2\ \dtt = \int_{0}^1 \norm{(t \Pp_t \divv) A_2f}^2\ \dtt
	\lesssim \norm{A_2 f}^2 \leq \norm{A}_\infty^2 \norm{f}^2.$$
\Bk 
Moreover, from Lemma \ref{Lem:OD}
with $\Dir' = \divv$ and $u = A_2 f$,
we obtain that $\QQ_t$ satisfies 
\eqref{Def:OD}.
Letting $S = \iden$ Propositions \ref{Prop:PrinPart},
\ref{Prop:CanPart} and \ref{Prop:CarlPart}
yields
$$\int_{0}^{t_1(\QQ)} \norm{t\Pp_t\divv A_2 \Ppb_tf}^2\ \dtt \lesssim \norm{A}_\infty^2 \norm{f}^2$$
for all $f \in \Lp{2}(\cV)$, \Rd where $t_1(\QQ) =\min\set{\hscale(\QQ), C_{\QQ}/(11c_E)}$ (from 
Proposition \ref{Prop:PrinPart}), and since 
$t_1(\QQ) \leq t_3(\QQ)$ where $t_3(\QQ)$ 
is defined in Proposition \ref{Prop:CarlPart}. 
We obtain 
$$\int_{t_1(\QQ)}^1 \norm{t\Pp_t\divv A_2 \Ppb_tf}^2\ \dtt \lesssim \norm{A}_\infty^2 \norm{f}^2$$
from recalling that $\norm{t\Pp_t\divv A_2 \Ppb_tf}  \lesssim \norm{A_2}_\infty \norm{f}$
uniformly in $t$.
\Bk 

Now, set $\QQ_t = \Qq_tA_1$ and $S = \conn(\imath \iden + \Dirb)^{-1}$.
This $\QQ_t$ clearly satisfies \eqref{Def:SFE} and
by Lemma \ref{Lem:OD} it satisfies
\eqref{Def:OD}. Thus, we are able to apply
 Propositions \ref{Prop:CanPart} and \ref{Prop:CarlPart},
but in order to apply Proposition \ref{Prop:PrinPart}, 
it remains to verify that the operator $S$
satisfies $\norm{\conn S u} \lesssim \norm{\conn u} + \norm{u}$
whenever $u \in \SobH{1}(\cV)$, 
To this end, we use the assumptions \ref{Hyp:Dom} and \ref{Hyp:Weitz}
to estimate
\begin{align*}
\norm{\conn S u} 
	&= \norm{\conn \conn(\imath \iden + \Dirb)^{-1}u}
	= \norm{\conn^2 (\imath \iden + \Dirb)^{-1} u} \\
	&\lesssim \norm{\Dir^2 (\imath \iden + \Dirb)^{-1}u} +  \norm{(\imath \iden + \Dirb)^{-1}u} \\
	&\lesssim \norm{\Dir(\imath \iden + \Dirb^{-1})\Dir u} + \norm{u} 
	\lesssim \norm{\Dir u} + \norm{u}
	\lesssim \norm{\conn u} + \norm{u}.
\end{align*}
We obtain 
$$\int_{0}^{ t_1(\QQ) } \norm{\Qq_t A_1 \conn (\imath\iden + \Dir)^{-1} \Ppb_t f}\ \dtt 
	\lesssim \norm{A}_{\infty}^2 \norm{f}^2$$
for $f \in \Lp{2}(\cV)$. 
\Rd  Similar to our previous calculation,  
$$ \int_{ t_1(\QQ) }^1 \norm{\Qq_t A_1 \conn (\imath\iden + \Dir)^{-1} \Ppb_t f}\ \dtt 
	\lesssim \norm{A}_{\infty}^2 \norm{f}^2$$
follows from $\norm{\Qq_t A_1 \conn (\imath\iden + \Dir)^{-1} \Ppb_t f} \lesssim \norm{A_1}_\infty \norm{f}$
uniformly in $t$. 
\Bk 

For the two choices of $\QQ_t$ which we made,
namely $\QQ_t = t\Pp_t\divv A_2$ and $\QQ_t = \Qq_tA_1$, 
the constants
$C_{\Delta,M}$ from \eqref{Def:OD} 
and $C_{\QQ}'$ from \eqref{Def:SFE}
only depend on $\const(\cM,\cV,\Dirb,\Dirp)$
and the constants $C_S$ and $C_{G,W}$ from Proposition \ref{Prop:PrinPart}.
This completes the proof. 
\end{proof}

\newpage
\bibliographystyle{amsplain}

\begin{thebibliography}{10}

\bibitem{ADMc}
David Albrecht, Xuan Duong, and Alan McIntosh, \emph{Operator theory and
  harmonic analysis}, Instructional {W}orkshop on {A}nalysis and {G}eometry,
  {P}art {III} ({C}anberra, 1995), Proc. Centre Math. Appl. Austral. Nat.
  Univ., vol.~34, Austral. Nat. Univ., Canberra, 1996, pp.~77--136. \MR{1394696
  (97e:47001)}

\bibitem{AS69}
M.~F. Atiyah and I.~M. Singer, \emph{Index theory for skew-adjoint {F}redholm
  operators}, Inst. Hautes \'Etudes Sci. Publ. Math. (1969), no.~37, 5--26.
  \MR{0285033}

\bibitem{AHLMcT}
Pascal Auscher, Steve Hofmann, Michael Lacey, Alan McIntosh, and Philippe.
  Tchamitchian, \emph{The solution of the {K}ato square root problem for second
  order elliptic operators on {${\Bbb R}\sp n$}}, Ann. of Math. (2)
  \textbf{156} (2002), no.~2, 633--654.

\bibitem{AKMC}
Andreas Axelsson, Stephen Keith, and Alan McIntosh, \emph{Quadratic estimates
  and functional calculi of perturbed {D}irac operators}, Invent. Math.
  \textbf{163} (2006), no.~3, 455--497.

\bibitem{B1}
Lashi Bandara, \emph{Density problems on vector bundles and manifolds}, Proc.
  Amer. Math. Soc. \textbf{142} (2014), no.~8, 2683--2695. \MR{3209324}

\bibitem{BMc}
Lashi Bandara and Alan McIntosh, \emph{The {K}ato {S}quare {R}oot {P}roblem on
  {V}ector {B}undles with {G}eneralised {B}ounded {G}eometry}, J. Geom. Anal.
  \textbf{26} (2016), no.~1, 428--462. \MR{3441522}

\bibitem{BB}
B.~Booss-Bavnbek, \emph{Basic functional analysis puzzles of spectral flow}, J.
  Aust. Math. Soc. \textbf{90} (2011), no.~2, 145--154. \MR{2821774}

\bibitem{Bourguignon}
Jean-Pierre Bourguignon and Paul Gauduchon, \emph{Spineurs, op\'erateurs de
  {D}irac et variations de m\'etriques}, Comm. Math. Phys. \textbf{144} (1992),
  no.~3, 581--599. \MR{1158762 (93h:58164)}

\bibitem{Bunke}
Ulrich Bunke, \emph{Comparison of {D}irac operators on manifolds with
  boundary}, Proceedings of the {W}inter {S}chool ``{G}eometry and {P}hysics''
  ({S}rn\'\i, 1991), no.~30, 1993, pp.~133--141. \MR{1246627 (95b:58152)}

\bibitem{CMcM2}
Andrea Carbonaro, Alan McIntosh, and Andrew~J. Morris, \emph{Local {H}ardy
  spaces of differential forms on {R}iemannian manifolds}, J. Geom. Anal.
  \textbf{23} (2013), no.~1, 106--169. \MR{3010275}

\bibitem{Chernoff}
Paul~R. Chernoff, \emph{Essential self-adjointness of powers of generators of
  hyperbolic equations}, J. Functional Analysis \textbf{12} (1973), 401--414.

\bibitem{Christ}
Michael Christ, \emph{A {$T(b)$} theorem with remarks on analytic capacity and
  the {C}auchy integral}, Colloq. Math. \textbf{60/61} (1990), no.~2, 601--628.

\bibitem{CMcMA}
R.~R. Coifman, A.~McIntosh, and Y.~Meyer, \emph{L'int\'egrale de {C}auchy
  d\'efinit un op\'erateur born\'e sur {$L^{2}$} pour les courbes
  lipschitziennes}, Ann. of Math. (2) \textbf{116} (1982), no.~2, 361--387.
  \MR{672839 (84m:42027)}

\bibitem{CMcM}
Ronald Coifman, Alan McIntosh, and Yves Meyer, \emph{The {H}ilbert transform on
  {L}ipschitz curves}, Miniconference on partial differential equations
  ({C}anberra, 1981), Proc. Centre Math. Anal. Austral. Nat. Univ., vol.~1,
  Austral. Nat. Univ., Canberra, 1982, pp.~26--69. \MR{758451 (85m:42009)}

\bibitem{Ginoux}
Nicolas Ginoux, \emph{The {D}irac spectrum}, Lecture Notes in Mathematics, vol.
  1976, Springer-Verlag, Berlin, 2009. \MR{2509837 (2010a:58039)}

\bibitem{Haase}
Markus Haase, \emph{The functional calculus for sectorial operators}, Operator
  Theory: Advances and Applications, vol. 169, Birkh\"auser Verlag, Basel,
  2006. \MR{2244037 (2007j:47030)}

\bibitem{Hebey}
Emmanuel Hebey, \emph{Nonlinear analysis on manifolds: {S}obolev spaces and
  inequalities}, Courant Lecture Notes in Mathematics, vol.~5, New York
  University Courant Institute of Mathematical Sciences, New York, 1999.

\bibitem{Kato}
Tosio Kato, \emph{Perturbation theory for linear operators}, second ed.,
  Springer-Verlag, Berlin, 1976, Grundlehren der Mathematischen Wissenschaften,
  Band 132.

\bibitem{LM}
H.~Blaine Lawson, Jr. and Marie-Louise Michelsohn, \emph{Spin geometry},
  Princeton Mathematical Series, vol.~38, Princeton University Press,
  Princeton, NJ, 1989. \MR{1031992 (91g:53001)}

\bibitem{Lesch}
Matthias Lesch, \emph{The uniqueness of the spectral flow on spaces of
  unbounded self-adjoint {F}redholm operators}, Spectral geometry of manifolds
  with boundary and decomposition of manifolds, Contemp. Math., vol. 366, Amer.
  Math. Soc., Providence, RI, 2005, pp.~193--224. \MR{2114489 (2005m:58049)}

\bibitem{Mc72}
Alan McIntosh, \emph{On the comparability of {$A\sp{1/2}$} and {$A\sp{\ast
  1/2}$}}, Proc. Amer. Math. Soc. \textbf{32} (1972), 430--434. \MR{0290169 (44
  \#7354)}

\bibitem{Morris2}
Andrew~J. Morris, \emph{Local quadratic estimates and holomorphic functional
  calculi}, The {AMSI}-{ANU} {W}orkshop on {S}pectral {T}heory and {H}armonic
  {A}nalysis, Proc. Centre Math. Appl. Austral. Nat. Univ., vol.~44, Austral.
  Nat. Univ., Canberra, 2010, pp.~211--231.

\bibitem{Morris3}
Andrew~J. Morris, \emph{The {K}ato square root problem on submanifolds}, J.
  Lond. Math. Soc. (2) \textbf{86} (2012), no.~3, 879--910. \MR{3000834}

\bibitem{Petersen}
Peter Petersen, \emph{Riemannian geometry}, second ed., Graduate Texts in
  Mathematics, vol. 171, Springer, New York, 2006. \MR{2243772 (2007a:53001)}

\bibitem{P96}
John Phillips, \emph{Self-adjoint {F}redholm operators and spectral flow},
  Canad. Math. Bull. \textbf{39} (1996), no.~4, 460--467. \MR{1426691}

\end{thebibliography}
\def\cprime{$'$}
\providecommand{\bysame}{\leavevmode\hbox to3em{\hrulefill}\thinspace}
\providecommand{\MR}{\relax\ifhmode\unskip\space\fi MR }
\providecommand{\MRhref}[2]{%
  \href{http://www.ams.org/mathscinet-getitem?mr=#1}{#2}
}
\providecommand{\href}[2]{#2}

\setlength{\parskip}{0mm}

\end{document}